 \newtheorem{theorem}{Theorem}[section]
 \newtheorem{proposition}{Proposition}[section]
 \newtheorem{lemma}{Lemma}[section]
 \newtheorem{corollary}{Corollary}[section]
 \newtheorem{remark}{Remark}[section]
 \renewcommand{\(}{\left(}
 \renewcommand{\)}{\right)}
 \renewcommand{\[}{\left[}
 \renewcommand{\]}{\right]}
 \newcommand{\eps}{\varepsilon}
\newtheorem{estimate}{Estimate}[section]
 \newcommand{\grad}{\ensuremath{\nabla}}
 \newcommand{\N}{\ensuremath{\mathbb{N}}}
 \newcommand{\R}{\ensuremath{\mathbb{R}}}
 \newcommand{\B}{\mathbb{B}}
 \newcommand{\abs}[1]{\left\vert#1\right\vert}
 \newcommand{\norm}[1]{\left\Vert#1\right\Vert}
 \newcommand{\be} {\begin{equation}}
 \newcommand{\ee} {\end{equation}}
 \newcommand{\bea} {\begin{eqnarray}}
 \newcommand{\eea} {\end{eqnarray}}
 \newcommand{\Bea} {\begin{eqnarray*}}
 	\newcommand{\Eea} {\end{eqnarray*}}
 \newcommand{\ba} {\beta}
 \newcommand{\De} {\Delta}
 \newcommand{\la} {\lambda}
 \newcommand{\nequiv} {\not\equiv}
 \newcommand{\noi} {\noindent}
 \newcommand{\I}{\infty}
 \newcommand{\f}{\frac}
 \newcommand{\ef}{\eqref}
 \newcommand{\dvg}{\mathrm{~d}V_{\mathbb B^N}}
   \newcommand{\bn}{\mathbb B^N}
   \newcommand{\Rn}{\mathbb R^N}
 \newcommand{\cn}{c(N,\la)}
 \makeatletter \@addtoreset{equation}{section} \makeatother
\begin{document}

 	\title[Poincar\'e-Sobolev equations]
 	{Poincar\'e-Sobolev equations with the critical exponent and a potential in the hyperbolic space}

\author[Bhakta]{Mousomi Bhakta}
\address{M. Bhakta, Department of Mathematics\\
Indian Institute of Science Education and Research Pune (IISER-Pune)\\
Dr Homi Bhabha Road, Pune-411008, India}
\email{mousomi@iiserpune.ac.in}
\author[Ganguly]{Debdip Ganguly}
\address{D. Ganguly, Department of Mathematics\\
Indian Institute of Technology Delhi\\
IIT Campus, Hauz Khas, New Delhi, Delhi 110016, India}
\email{debdipmath@gmail.com, debdip@maths.iitd.ac.in}	
\author[Gupta]{Diksha Gupta}
\address{D. Gupta, Department of Mathematics\\
Indian Institute of Technology Delhi\\
IIT Campus, Hauz Khas, New Delhi, Delhi 110016, India}
\email{maz208233@maths.iitd.ac.in}
\author[Sahoo]{Alok Kumar Sahoo}
 \address{A.K.Sahoo, Department of Mathematics\\
Indian Institute of Science Education and Research Pune (IISER-Pune)\\
Dr Homi Bhabha Road, Pune-411008, India}
\email{alok.sahoo@acads.iiserpune.ac.in}

 	\subjclass[2010]{Primary: 35J20, 35J60, 58E30}
 	\keywords{Aubin-Talenti bubbles, critical Sobolev exponent, critical points at infinity,  hyperbolic space, hyperbolic bubbles}
 	
 	\date{}
 	 	
 	\begin{abstract}
 	On the hyperbolic space, we study a semilinear equation with non-autonomous nonlinearity having a critical Sobolev exponent. 
 The Poincar\'e-Sobolev equation on the hyperbolic space explored by Mancini and Sandeep [Ann. Sc. Norm. Super. Pisa Cl. Sci. 7 (2008)] resembles our equation.   As seen from the profile decomposition of the energy functional associated with the problem, the concentration happens along two distinct profiles: localised Aubin-Talenti bubbles and hyperbolic bubbles. Standard variational arguments cannot obtain solutions because of nontrivial potential and concentration phenomena. As a result, a deformation-type argument based on the critical points at infinity of the associated variational problem has been carried out to obtain solutions for $N>6.$ Conformal change of metric is used for proofs, enabling us to convert the original equation into a singular equation in a ball in $\mathbb{R}^N$ and perform a fine blow-up analysis. 
 
 	\end{abstract}
 	
 	%%% ----------------------------------------------------------------------
 	\maketitle
 	%%% ----------------------------------------------------------------------
 	\tableofcontents

%% \linenumbers
 	
 	%% main text
 	\section{Introduction}

Consider the following elliptic problem on the hyperbolic space of ball model $\mathbb B^N$:
 	\begin{equation}
 	\tag{$P_{a}$}\label{Pa}
 	\left\{\begin{aligned}
 	 	 -\De_{\mathbb B^N} u -\lambda u&=a(x) \abs{u}^{2^*-2}u  \quad\text{in } \mathbb B^N,\\
 	u>0 \text{ in } \mathbb B^N &\,\,\,\text{ and }\quad  u\in H^1(\mathbb B^N),
 	\end{aligned}
 	\right.
 	\end{equation}
 	where $2^*:=\f{2N}{N-2}$ and $\frac{N(N-2)}{4}<\lambda<\f{(N-1)^2}{4}$ and $N\geq 4$. $H^1(\bn)$ denotes the Sobolev space on the ball model of the hyperbolic space $\bn$, $\Delta_{\bn}$ denotes the
Laplace-Beltrami operator on $\bn$, $\f{(N-1)^2}{4}$  being the bottom of the $L^2$ spectrum of $-\De_{\bn}$.

 Throughout the article, we assume the following:
 \begin{equation}\tag{\textbf{A}}\label{a_maincond}
     0<a\in C^2(\mathbb B^N) \quad\text{ and }\quad a(x)\to 1 \text{ as } d(x,0)\to \infty, 
     \end{equation}
where $d(x,0)$ is the geodesic distance of $x$ from $0$ in the hyperbolic space $\bn$.  The Euclidean unit ball $B^N:=\{x\in\Rn:|x|<1\}$ endowed with the metric 
  $g_{\bn}:= \left(\frac{2}{1-|x|^2}\right)^2\;g_{Eucl}$ represents the ball model for the hyperbolic N-space where $g_{Eucl}$ is the Euclidean metric. The corresponding volume element is given by $\dvg=(\frac{2}{1-|x|^2})^N\mathrm{~d}x$, where $\mathrm{~d} x$ is the Lebesgue measure on $\Rn$. 
  
The geometric aspect of the problem is the primary feature of this equation, aside from the existence of the Sobolev exponent in \eqref{Pa}.  Mathematically speaking, the equation posed on the hyperbolic space presents some new features for the problem's analysis, which we will address in more detail below. Non-compact variational elliptic problems have garnered significant attention recently due to their applicability in various domains, including applied mathematics, differential geometry, and mathematical physics. Noncompactness is a common feature of equations involving critical nonlinearities, singularities, or unbounded domains, which presents a major obstacle for conventional methods. Many mathematicians have created intricate mathematical frameworks and analytical methods suited for non-compact problems to address these difficulties. These include the Palais-Smale (PS) condition, blow-up analysis, concentration-compactness principles, and numerous topological techniques. The theory of non-compact issues has been thoroughly studied and developed to a high degree of sophistication by many researchers. Brezis and Nirenberg \cite{BN} showed the existence and non-existence of positive solutions in bounded domains for the case of $\Rn$ in their key works.  Their novel approach to restoring compactness centres on the examination of local PS sequences. In addition, throughout the past few decades, many variants of scalar field equations with potential $a(\cdot)$ have been studied in detail, beginning with the groundbreaking work of Berestycki-Lions, Bahri-Li, Bahri-Berestycki, and Bahri-Li  \cite{BB1, Bahri-Li, Bahri, BL1, BL2}.  These subjects have now undergone extensive development, especially concerning the multiplicity-related concerns and the assumptions on the potential $a(\cdot)$. Works like \cite{Ad, AT2, CG1, CG2, CG3, CG4, CW}, among others, demonstrate this advancement; nonetheless, this list is by no means complete.

With its connections to several other critical exponent problems, such as the Hardy-Sobolev-Maz'ya equations, and several geometric partial differential equations, including those found in the Yamabe problem, a natural generalisation of this well-known Brezis-Nirenberg problem in the hyperbolic space gained prominence. Mancini and Sandeep thoroughly studied the existence and uniqueness of positive solutions to this Brezis-Nirenberg-type problem in hyperbolic space. They have shown in their seminal work \cite{MS} that the following equation
\begin{equation}
 	\tag{$P_{\lambda,1,0}$}\label{E}
 	\left\{\begin{aligned}
 	 	 -\De_{\mathbb B^N} u -\lambda u&=\abs{u}^{2^*-2}u \quad\text{in } \mathbb B^N,\\
 	u>0 \text{ in } \mathbb B^N &\,\,\,\text{ and } u\in H^1(\mathbb B^N)
 	\end{aligned}
 	\right.
 	\end{equation}
  has a unique weak solution $\mathcal{U}:=\mathcal{U}_{N,\la}$ (up to hyperbolic isometries), which is also an extremal of the Poincar\'e-Sobolev inequality \eqref{S-inq}. By elliptic regularity, any solution of \eqref{E} is also $C^\infty$. In \cite{MS}, authors have also shown that any solution of \eqref{E} is radially symmetric, symmetric-decreasing  and satisfies the following decay property:  for every $\varepsilon > 0,$ there exist positive constants $C_1^{\varepsilon}$ and $C_2^{\varepsilon},$ depending on $\varepsilon,$ such that 
  \begin{equation*}
	C_1^{\varepsilon}\;e^{-(\cn + \varepsilon) \, d(x,0)} \leq \mathcal{U}(x) \leq C_2^{\varepsilon}\; e^{-(\cn - \varepsilon) \, d(x,0)}, \ \ \ \mbox{for all} \ \ x \in \bn,
\end{equation*}
where $\cn=\frac{N-1+\sqrt{(N-1)^2-4\la}}{2}$ and $d(x,0)$ is the hyperbolic distance. Later, Bandle and Kabeya in \cite[Lemma~2.3]{CYK} improved the above result and showed that the estimate holds for  $\varepsilon =0,$ i.e.,  there exist constants $\chi_i:= \chi_i(N,\la),\,  i = 1,2$  such that 
\begin{align} \label{bubble decay}
\chi_1e^{-\cn d(x,0)} \leq \mathcal{U}(x) \leq \chi_2 e^{- \cn d(x,0)}, \ \ \ \mbox{for all} \ \ x \in \bn.
\end{align}
For simplicity of notations, we will always denote the radially symmetric
solution by $\mathcal{U}$ and the dependence of $N,\la$ will be implicitly assumed. Moreover, the authors in \cite{MS} discovered that \eqref{E} naturally arises when studying the Euler-Lagrange equations that correspond
to the Hardy-Sobolev-Maz'ya (HSM) inequalities. Recently, mathematicians have been curious to investigate equivalent HSM inequalities for higher-order derivatives after its connection with \eqref{E} (see \cite{LY1, LY2}). The
authors in \cite{LLY} have thereafter studied the existence, nonexistence, and symmetry of solutions
to the higher-order Brezis-Nirenberg problem in the hyperbolic space.  The aforementioned authors' work depends on the Hardy-Littlewood-Sobolev inequality on the hyperbolic space, as well as on estimates of Green's functions for the kernels of powers of fractional Laplacian and the Helgason-Fourier analysis.

Inspired by this, in this article, we aim to investigate whether the solutions persist in the critical case after perturbing with a non-radial potential function, denoted by $a(x)$. The primary difficulty in studying such problems across the entirety of $\bn$ lies in the translation invariance of the limit equation \eqref{E} under the set of isometries of 
$\bn$, resulting in the loss of compactness. On the other hand, the local compactness argument and accurate energy estimates can be exploited in the subcritical scenario (see \cite{GGS1, GGS2, GGS3}) for the existence of solutions. Nonetheless, in the presence of the critical exponent, even local compactness cannot be regained. Indeed, the PS decomposition established by Bhakta and Sandeep in \cite{BS} identified the precise cause of this loss of compactness, demonstrating that it can be attributed to two profiles: one along the hyperbolic bubble, i.e., the solutions of \eqref{E}, and the other along the localized Aubin-Talenti profile, i.e., the positive solutions of
\be\label{Talenti}
-\De V =  \abs{V}^{2^*-2}V,  \quad V\in D^{1,2}(\mathbb{R}^N). 
\ee
Positive solutions of the above equation are given by
\begin{equation*}
V^{\varepsilon, y}(x):=\varepsilon^{(2-N) / 2} V\left(\frac{x-y}{\varepsilon}\right),
\end{equation*}
where
\begin{equation*}
V(x):=[N(N-2)]^\frac{N-2}{4}\left(1+|x|^2\right)^{\frac{2-N}{2}}.
\end{equation*}

Furthermore, we investigated the problem \eqref{Pa} with a non-homogeneous term $f \not\equiv 0$ in \cite{BGGS}. To establish solutions in that scenario, we initially extended the PS decomposition of Bhakta-Sandeep, which posed significant challenges due to the presence of the potential $a(\cdot)$ and necessitated several estimates and geometric arguments regarding the isometry group (Möbius group) of the hyperbolic space. However, the techniques employed to prove the existence results would not be applicable here because $f \equiv 0.$ Indeed, for case $a(x)\leq 1$, the first solution was identified as a unique critical point $\mathcal{V}_{a, f}(x)$ of the corresponding energy functional $I_{\la, a, f}$ in  $B\left(r_{1}\right)=\left\{u \in H^{1}\left(\mathbb{B}^{N}\right)\right.$: $\left.\|u\|<r_{1}\right\}$ for some $r_1>0$.
 Then, we proved the existence of $R>0$ such that the following subadditivity property is demonstrated by the energy functional
	\begin{equation}
		I_{\la,a, f}\left(\mathcal{V}_{a, f} (x)+t\;\mathcal{U}(\tau_{-y}(x))\right)<I_{\la,a, f}\left(\mathcal{V}_{a, f}(x)\right)+I_{\la,1,0}(\mathcal{U})\;\;\; \label{4.ab}
	\end{equation} 
for all $d(y,0) \geq R$, $t>0$, $\|f\|_{H^{-1}\left(\mathbb{B}^{N}\right)}$ sufficiently small, and $\tau_{-y}(.)$ denotes the hyperbolic translation with $\tau_{-y}(0)=-y$. Now note that $\mathcal{V}_{a, 0}(x)=0$ and hence the energy estimate does not hold when $f \equiv 0$, and therefore the entire arguments, when applied to the case $f \equiv 0$, collapses. 
Furthermore, in the case where $a(x)\geq 1$, we partition $H^1(\bn)$ into three components and demonstrate that the energy functional achieves its infimum on one of these components. The proof hinges on the existence of a certain $u$ belonging to one of the components such that $\langle f, u \rangle >0$. Subsequently, the derivation of the second solution relies on the first solution and a similar energy estimate as \eqref{4.ab}, aiding us in establishing the min-max level below the first level of the breaking of the PS sequence. Thus, the case addressed in this article, where $f\equiv 0$, presents its own set of challenges and requires an entirely different approach.

As expected, we turn to the PS decomposition (see Theorem \ref{ps_decom}) to address the loss of compactness even in this case. We then use this global compactness result to prove existence results. Nevertheless, accomplishing this is challenging, so we divide our study into three cases based on the levels of the breakdown of the PS condition. Initially, we address the scenarios where the Aubin-Talenti blow-up can be circumvented, specifically when $2D_\lambda \leq \max(a)^{\frac{2-N}{2}}D$ or $\max(a)^{\frac{2-N}{2}}D \leq D_\lambda$, where $D_\lambda$ and $D$ are defined in \eqref{Energies}. In these instances, we establish the existence of solutions by constructing a PS sequence at levels within these safe energy ranges, employing the Ekeland variational principle under different sign relations between the potential term and its behaviour at infinity. The challenge arises when the energy level of the PS sequence potentially falls within the region where the Aubin-Talenti blow-up occurs. To address this, we exploit \it  Critical point at infinity \rm due to A.~Bahri and then carefully analyse the problem, we refer, 
(\cite{BahriB, BB, BC2, Bahri-Li, BE1, BE, BN,  CFS,  KW,  Li1,Li2, Li-Zhu,Sch, Struwe, Yam}) for more details. 

The structure of the paper is as follows: Section \ref{pre} covers the preliminaries, where we introduce essential notations, geometric definitions, and fundamental results in hyperbolic space. Section \ref{WithPS} presents the existence results, focusing on scenarios where Palais-Smale sequences can be safely constructed at specific energy levels. In Section \ref{WithoutPS}, we address the remaining case where the energy level of the constructed Palais-Smale sequence may fall within the region where an Aubin-Talenti blow-up cannot be excluded. Section \ref{WithoutPS} is devoted to the proof of Theorem \ref{theorem-critical-point-infinity}. Section \ref{appen} provides the appendix, which contains crucial estimates used in the proof of Theorem \ref{theorem-critical-point-infinity}.

\medskip

{\bf Notations:} Throughout this paper, we denote by $\mathcal{U}$ as the unique positive solution of \eqref{E}. $S_{\la}$ denotes the best constant in \eqref{S-inq} and $S$ denotes the best constant in Sobolev inequality in $\Rn$. $\tau_b(.)$ denotes the hyperbolic translation (see \eqref{hyperbolictranslation}) with $\tau_b(0)=b$. Numerous positive constants, whose specific values are irrelevant, are denoted by $C$.

    \section{Preliminaries}\label{pre}
   
 We denote the inner product on the tangent space of $\bn$ by $\langle,\rangle_{\bn}$. $\nabla_{\bn}$ and $\Delta_{\bn}$ denote gradient 
 vector field and Laplace-Beltrami operator, respectively. Therefore in terms of local (global) coordinates $\nabla_{\bn}$ and $\Delta_{\bn}$ takes the form
\begin{align*} 
 \nabla_{\bn} = \left(\frac{1 - |x|^2}{2}\right)^2\nabla,  \quad 
 \Delta_{\bn} = \left(\frac{1 - |x|^2}{2}\right)^2 \Delta + (N - 2)\left(\frac{1 - |x|^2}{2}\right)  x \cdot \nabla,
\end{align*}
where $\nabla, \Delta$ are the standard Euclidean gradient vector field and Laplace operator, respectively, and '$\cdot$' denotes the 
standard inner product in $\mathbb{R}^N.$

 \medskip 

\noindent

 {\bf Hyperbolic distance on $\bn.$} The hyperbolic distance between two points $x$ and $y$ in $\bn$ will be denoted by $d(x, y).$ The hyperbolic distance between
$x$ and the origin can be computed explicitly  
\begin{align}\label{hyp-dist}
\rho := \, d(x, 0) = \int_{0}^{|x|} \frac{2}{1 - s^2} \, {\rm d}s \, = \, \log \frac{1 + |x|}{1 - |x|},
\end{align}
and therefore  $|x| = \tanh \frac{\rho}{2}.$ Moreover, the hyperbolic distance between $x, y \in \bn$ is given by 
\begin{align*}
d(x, y) = \cosh^{-1} \left( 1 + \dfrac{2|x - y|^2}{(1 - |x|^2)(1 - |y|^2)} \right).
\end{align*}
As a result, a subset of $\bn$ is a hyperbolic sphere in $\bn$ if and only if it is a Euclidean sphere in $\mathbb{R}^N$ and contained in $\bn$, possibly 
with a different centre and different radius, which can be explicitly computed from the formula of $d(x,y)$ \cite{RAT}.  Geodesic balls in $\bn$ with radius $r$ centered at $x \in \bn$ will be denoted by
$$
B_r(x) : = \{ y \in \bn : d(x, y) < r \},
$$
and $B(x,r)$ denotes the open ball in Euclidean space with center $x$ and radius $r > 0$.

\medskip
\noindent
Next, we introduce the concept of hyperbolic translation.

\textbf{Hyperbolic translation.} Given a point $a \in \Rn$ such that $|a|>1$  and $r>0,$ let $S(a,r):=\{x \in \Rn \ | \ |x-a| = r\}$ be the sphere in $\Rn$ with center $a$ and radius $r$ that intersects $S(0,1)$ orthogonally. It is known that it is the case if and only if $r^2 = |a|^2 -1,$ and hence $r$ is determined by $a.$ Let $\rho_{a}$ denotes the reflection with respect to the plane $P_a := \{x \in \Rn \ | \ x\cdot a = 0\}$ and $\sigma_a$ denotes the inversion with respect to the sphere $S(a,r).$ Then $\sigma_a\rho_a$ leaves $\bn$ invariant (see \cite{RAT}).
 
 For $b \in \mathbb{B}^N,$ the hyperbolic translation $\tau_{b}: \mathbb{B}^N \rightarrow \mathbb{B}^N$ that takes $0$ to $b$ is defined by $\tau_b = \sigma_{b^*}\circ\rho_{b^*}$ and can be expressed by the following formula
 
  \begin{align} \label{hyperbolictranslation}
  \tau_{b}(x) := \frac{(1 - |b|^2)x + (|x|^2 + 2 x \cdot b + 1)b}{|b|^2|x|^2 + 2 x\cdot b  + 1},
 \end{align}
where $b^* = \frac{b}{|b|^2}.$
 It turns out that $\tau_{b}$ is an isometry and forms the M\"obius group of $\bn$ (see \cite{RAT}, Theorem 4.4.6 for details and further discussions on isometries). Note that $\tau_{-b} = \sigma_{-b^*}\circ\rho_{-b^*}$ is the hyperbolic translation that takes $b$ to the origin. In other words, the hyperbolic translation that takes $b$ to the origin is the composition of the reflection $\rho_{-b^*}$ and the inversion $\sigma_{-b^*}.$

  Now we recall the Poincar\'e-Sobolev inequality: for $N\geq 3$ and $\la<\frac{(N-1)^2}{4}$, there exists an optimal constant $S_{\lambda} >0$ such that
  \begin{equation}
	S_{\lambda}\Big(\int_{ \mathbb B^N}|u|^{2^*}\dvg\Big)^\frac{2}{2^*}\leq \int_{ \mathbb B^N}\Big[|\nabla_{\mathbb B^N}u|^2-\lambda u^2\Big]\dvg \label{S-inq}
 \end{equation}
	for all $u\in C_c^\I(\mathbb{B}^N)$. Furthermore, the constant $S_{\lambda}$ is achieved by a unique positive solution $\mathcal U$ of \eqref{E} (i.e., unique solution of  \eqref{Pa} with $a\equiv 1$) and $\mathcal U$ is hyperbolic symmetric (see \cite{MS}). By  $S$, we denote the best constant in the Sobolev inequality in $\Rn$, i.e.
	$$  S \Big(\int_{ \mathbb R^N}|u|^{2^*}\mathrm{~d}x\Big)^\frac{2}{2^*}\leq \int_{ \mathbb R^N} |\nabla  u|^2\mathrm{~d}x$$
	for all $u\in C_c^\I(\mathbb{R}^N)$. Moreover $S_{\lambda}<S$ for $\lambda>\frac{N(N-2)}{4}$ (See \cite{AX14618}). It's well known that $S$ is achieved by the unique (up to translation and dilation) positive solution of \eqref{Talenti}. The associated energy $J(V)$ is given by
$$J(V)=\frac{1}{2}\int_{\mathbb{R}^N}|\nabla V|^2\mathrm{~d}x-\frac{1}{2^*}\int_{\mathbb{R}^N}|V|^{2^*}\mathrm{~d}x.$$
Corresponding to the equation \ef{Pa}, we define the energy functional $\mathcal I_{a}:H^1(\mathbb B^N)\to\R$ by
 	 	\be
 	 	\mathcal I_{a}(u)=\f{1}{2}\norm{u}^2_\la-\f{1}{2^*}\int_{\mathbb B^N}a(x)|u|^{2^*}\dvg(x),
 	 	\ee
 where $$\|u\|_{\lambda}^2 : = \int \limits_{\B^{N}}\left(|\nabla_{\B^{N}}u|^{2}-\lambda u^{2}\right) \dvg~\hbox{ for all } u\in H^1(\B^{N}),$$
is an equivalent norm w.r.t. the standard norm in $H^1(\mathbb{B}^N)$, since $\lambda < \frac{(N-1)^2}{4}$ and the corresponding inner product is given by $\langle \cdot, \cdot\rangle_{\lambda}.$ Denote 
\begin{equation}
D_\lambda:=\mathcal I_{1}(\mathcal U) \quad\text{and}\quad D:=J(V). \label{Energies}\end{equation}

Therefore, $\mathcal I_{1}(\mathcal U)=\frac{1}{N}S_\lambda^\frac{N}{2}$ and $J(V)=\frac{1}{N}S^\frac{N}{2}$, hence  $D_\lambda<D$.
 
 \medskip

 A sequence $(u_n)_n$ in $H^1(\mathbb B^N)$ is said to be a Palais-Smale sequence (in short, PS sequence) of $\mathcal I_{a}$ at the level $c$ if  $\mathcal I_{a}(u_n)\to c$ and $	\mathcal I^{\prime}_{a}(u_n)\to 0$ in $H^{-1}(\mathbb B^N)$.  Here we recall the profile decomposition theorem associated with the functional $\mathcal I_{a}$ from \cite[Theorem 3.1]{BGGS}.

\begin{theorem}\label{ps_decom}\cite[Theorem 3.1]{BGGS}
 		Let $(u_n)_n\in H^1(\mathbb B^N)$ be a PS sequence at the level $\ba\geq 0$. There exists  $n_1,\, n_2\in \N$,  a critical point $u$ of $\mathcal I_{a}$, and sequences $u^j_n,\, v^k_n\in H^1(\mathbb B^N)$ for all $1\leq j\leq n_1$ and $1\leq k\leq n_2$, such that  up to a subsequence (still denoted by the same index),
   $$u_n=u+\sum_{j=1}^{n_1}u_n^j+\sum_{k=1}^{n_2}v_n^k+o(1),$$
 	where $o(1)\to 0$ in $H^1(\mathbb B^N)$. The sequences $(u_n^j)_{n\geq 1}$ are PS sequences of the form $u_n=\mathcal{U}\circ T_n$ and $T_n$ is a hyperbolic isometry on $\bn$ such that $T_n(0)\to\infty$. Let $y_0\in\mathbb{B}^N$, and $\phi \in C_c^{\infty}\left(\mathbb{B}^N\right)$ such that $0 \leq \phi \leq 1,\; \phi(x)=1$ for $|x|<r$ and $\phi(x)=0$ for $|x|>R$, where $R$ is chosen so that  $B(y_0,R) \subseteq B(0,1)$. 
 The sequences $(v_n^k)_{n\geq 1}$ are PS sequences of the form
\be\label{seq-vn}
v_{n}(x):=\Big(\f{1-|x|^2}{2}\Big)^\f{N-2}{2}a(y_0)^{\frac{2-N}{4}}\phi(x-y_0)  \eps_n^{\f{2-N}{2}}V\(\f{x-y_0}{\eps_n}\),
\ee
  where $\eps_n>0$ and $\eps_n\to0$.
 Moreover, 
	$$\ba=\mathcal I_{a}(u)+\sum_{j=1}^{n_1}\mathcal I_{1}(\mathcal U^j)+\sum_{k=1}^{n_2} a(y^k)^{-\frac{N-2}{2}}J(V^k)+o(1),$$
where $ \mathcal U^j$ and $V^k$ are the solutions of \eqref{E} and \eqref{Talenti} respectively corresponding to $u^j_n$ and $v_n^k$. Furthermore, we have
\begin{enumerate}
	\item $T^j_n \circ T^{-i}_n(0)\to \I $ as $n\to \I$ for $i\ne j$, where $T^{-i}_n= (T^{i}_n)^{-1}$.
	\item $\Big| \log\big(\frac{\eps^k_{n} }{\eps^l_{n}}\big)\Big|+ \Big| \frac{y_n^k-y^l_n}{\eps^l_{n}}\Big|\to \I$ as $n\to \I$ for $k\ne l$.
\end{enumerate}
\end{theorem}
\begin{corollary}
    For arbitrary naturals $n_1,n_2$ and points $y^k\in\mathbb R^N$, any positive PS-sequence for $\mathcal I_{a}$ at a level that cannot be written in the following form $$\sum_{j=1}^{n_1}\mathcal I_{1}(\mathcal U^j)+\sum_{k=1}^{n_2} a(y^k)^{-\frac{N-2}{2}}J(V^k)$$  yields a nontrivial weak solution of \eqref{Pa}.
\end{corollary}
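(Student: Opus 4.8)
The plan is to read the corollary off directly from the profile decomposition. Let $(u_n)_n$ be a positive Palais--Smale sequence for $\mathcal I_a$ at a level $\beta \ge 0$ which, by hypothesis, cannot be written in the displayed form for any choice of $n_1,n_2$ and $y^k$. Applying Theorem \ref{ps_decom} I obtain, along a subsequence, a critical point $u$ of $\mathcal I_a$, hyperbolic profiles with associated solutions $\mathcal U^j$ of \eqref{E}, Euclidean profiles with associated solutions $V^k$ of \eqref{Talenti}, and the energy identity
$$\beta = \mathcal I_a(u) + \sum_{j=1}^{n_1}\mathcal I_1(\mathcal U^j) + \sum_{k=1}^{n_2} a(y^k)^{-\frac{N-2}{2}} J(V^k) + o(1), \qquad n \to \infty.$$
Every summand on the right except the $o(1)$ is a real number independent of $n$ (indeed $\mathcal I_1(\mathcal U^j)=D_\lambda$ and, the sequence being positive, $J(V^k)=D$), so letting $n\to\infty$ gives the exact identity $\beta = \mathcal I_a(u) + \sum_{j=1}^{n_1}\mathcal I_1(\mathcal U^j) + \sum_{k=1}^{n_2} a(y^k)^{-\frac{N-2}{2}} J(V^k)$.

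Next I run the dichotomy on the weak limit $u$. If $u\equiv 0$ then $\mathcal I_a(u)=0$, and the identity above expresses $\beta$ in exactly the excluded form (with these particular $n_1,n_2$ and $y^k$, and allowing $n_1=n_2=0$ in the degenerate case $\beta=0$), a contradiction. Hence $u\not\equiv 0$. Since $u_n\rightharpoonup u$ in $H^1(\mathbb B^N)$, passing to a further subsequence with $u_n\to u$ a.e.\ gives $u\ge 0$; and being a critical point of $\mathcal I_a$, the function $u$ is a weak solution of $-\Delta_{\mathbb B^N}u-\lambda u = a(x)|u|^{2^*-2}u$. A Brezis--Kato bootstrap for the critical right-hand side (exactly as quoted for \eqref{E}) upgrades $u$ to a classical solution, and the strong maximum principle applied to $-\Delta_{\mathbb B^N}u-(\lambda+a(x)u^{2^*-2})u=0$, whose zeroth-order coefficient is locally bounded since $u$ is continuous, excludes an interior zero of the nonnegative nontrivial $u$. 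Therefore $u>0$ on $\mathbb B^N$ with $u\in H^1(\mathbb B^N)$, i.e.\ $u$ solves \eqref{Pa}.

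There is no serious obstacle: all of the analytic content is packaged inside Theorem \ref{ps_decom}, and the only step beyond bookkeeping is the passage from ``$u\ge 0,\ u\not\equiv 0$'' to ``$u>0$'', which is the standard elliptic-regularity-plus-strong-maximum-principle argument and needs nothing specific to the hyperbolic setting beyond what is already used for \eqref{E}. The one point worth stating carefully in the write-up is that $\beta$ being a fixed constant is what licenses discarding the $o(1)$ in the energy expansion.
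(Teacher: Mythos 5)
Your argument is correct and is essentially the paper's own proof: apply the profile decomposition, note that if the weak limit $u$ vanished the energy identity would put $\beta$ in the excluded form, and conclude $u\not\equiv 0$ is a nontrivial critical point. The additional regularity and strong-maximum-principle discussion of positivity goes beyond what the corollary asserts (a nontrivial weak solution) but is harmless.
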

\begin{proof}
    Let $u_n$ be a PS sequence at the level $\beta$. Then $u_n$ is bounded, and suppose its weak limit is $u$. Now if $u\not\equiv0$, then $u$ is a weak solution. Now if $u\equiv 0$, then using the previous theorem, we have $$\ba= \sum_{j=1}^{n_1}\mathcal I_{1}(U^j)+\sum_{k=1}^{n_2} a(y^k)^{-\frac{N-2}{2}}J(V^k).$$ This is a contradiction. This proves the corollary.
\end{proof}

\section{Existence of solutions: The case with Palais-Smale condition }\label{WithPS}
This section discusses the results in the region where the Palais-Smale (PS) condition holds. Initially, we determine these ranges based on $\lambda$.
 It is challenging to compare $a(y^k)^{\frac{2-N}{2}}D$ with $D_\lambda$, even though $D_\lambda <D$. The domain will now be divided into the following three cases for analysis:

    \begin{itemize}
        \item[$(a)$] \label{casea}$ 2D_\lambda\leq \max (a)^{\frac{2-N}{2}}D,$
        \item [$(b)$] $ D_\lambda< \max (a)^{\frac{2-N}{2}}D<2D_\lambda,$
        \item [$(c)$] $ \max (a)^{\frac{2-N}{2}}D\leq D_\la.$
    \end{itemize}
    
  Whether the solutions are constrained minimizers (index 1 solutions) or constrained mountain pass points (index 2 solutions) will depend on how the weight function $a(.)$ behaves. \\
  
  \smallskip
  \noindent
\textbf{Part I: Constrained Minimizers.} We define the Nehari manifold $\mathcal N$  by
\be\label{NeMan}
\mathcal N:=\left\{u\in  H^1(\mathbb B^N)\setminus\{0\}\,\,\text{ such that }\,\,\langle\,\mathcal I_a'(u),u\,\rangle=0  \right\}. 
\ee
Let $u\not\equiv 0$, then there exists a $t_u$ such that $t_uu\in\mathcal N$. A simple computation reveals 
\begin{equation}
    t_u=\left(\frac{\norm{u}^2_\la}{\int_{\mathbb B^N}a(x)|u|^{2^*}\dvg(x)}\right)^{\frac{1}{2^*-2}}, \label{scaNeh}
\end{equation}
and \begin{equation*}
    \mathcal{I}_a(t_uu)=\frac{1}{N}\norm{u}^N_\la      \left(\int_{\mathbb B^N}a(x)|u|^{2^*}\dvg(x)\right)^{-\frac{N}{2^*}}.
    \end{equation*}
Therefore, we can write 
 \begin{align*}
        \mathcal{I}_a(t_uu)&= \frac{1}{N}\left( E_a(u)\right)^{\frac{N}{2}}
    \end{align*}
    where $$E_a(u):= \norm{u}^2_\la      \left(\int_{\mathbb B^N}a(x)|u|^{2^*}\dvg(x)\right)^{-\frac{2}{2^*}}
    \quad \mbox{and} \quad E_\I(u):= \norm{u}^2_\la      \left(\int_{\mathbb B^N} |u|^{2^*}\dvg(x)\right)^{-\frac{2}{2^*}}.$$

\medskip

\begin{proposition}\label{Pro_N_inf}
    Assume \eqref{a_maincond} holds. If $$ \inf_{u\in\mathcal N}\mathcal I_a(u)\,<\,\min \Big( D_\lambda\,,\,\, \max (a)^{\frac{2-N}{2}}D\Big),$$ then the infimum is achieved. Moreover, $\mathcal I_a$ has a nontrivial critical point.
\end{proposition}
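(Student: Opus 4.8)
The plan is to run the standard concentration-compactness / Nehari-manifold argument, using the profile decomposition (Theorem \ref{ps_decom}) to rule out loss of compactness below the stated threshold. First I would let $(u_n)_n \subset \mathcal N$ be a minimizing sequence for $\inf_{\mathcal N}\mathcal I_a$; since $\mathcal I_a(u_n) = \tfrac1N E_a(u_n)^{N/2}$ and $u_n\in\mathcal N$, boundedness of $(u_n)$ in $H^1(\bn)$ is immediate from $\|u_n\|_\lambda^2 = \int_{\bn} a|u_n|^{2^*}\dvg$ together with the Poincar\'e--Sobolev inequality \eqref{S-inq} and $0<a$ bounded. By Ekeland's variational principle applied on $\mathcal N$ (or by the usual Lagrange-multiplier computation, noting $\mathcal N$ is a natural constraint), one may assume $(u_n)$ is also a Palais--Smale sequence for $\mathcal I_a$ at the level $\beta := \inf_{\mathcal N}\mathcal I_a$. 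Passing to a subsequence, $u_n \rightharpoonup u$ weakly in $H^1(\bn)$, and $u$ is a critical point of $\mathcal I_a$; the whole point is to show $u\not\equiv 0$, for then $u\in\mathcal N$ and a Brezis--Lieb / Fatou argument gives $\mathcal I_a(u)\le \beta$, hence $\mathcal I_a(u)=\beta$ and the infimum is attained by a nontrivial critical point.

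Next I would argue by contradiction: suppose $u\equiv 0$. Apply Theorem \ref{ps_decom} to $(u_n)$ at level $\beta$. Since $u=0$, the decomposition reads
\begin{equation*}
\beta = \sum_{j=1}^{n_1}\mathcal I_1(\mathcal U^j) + \sum_{k=1}^{n_2} a(y^k)^{-\frac{N-2}{2}} J(V^k).
\end{equation*}
Because $(u_n)$ is nontrivial (it is a minimizing sequence on $\mathcal N$, so $\|u_n\|_\lambda$ is bounded away from $0$ by the Sobolev inequality and $a$ bounded), we have $n_1 + n_2 \ge 1$. Each term $\mathcal I_1(\mathcal U^j)$ equals $D_\lambda = \tfrac1N S_\lambda^{N/2}$, since by \cite{MS} the only positive solution of \eqref{E} is $\mathcal U$ up to isometry and $\mathcal I_1$ is isometry-invariant; each term $a(y^k)^{-\frac{N-2}{2}} J(V^k) = a(y^k)^{\frac{2-N}{2}} D \ge \min(a)^{\frac{2-N}{2}} D \ge \max(a)^{\frac{2-N}{2}}D$ — wait, one must be careful here with the direction of the inequality. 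Since $a>0$ and $\tfrac{2-N}{2}<0$, the map $t\mapsto t^{\frac{2-N}{2}}$ is decreasing, so $a(y^k)^{\frac{2-N}{2}} \ge \max(a)^{\frac{2-N}{2}}$, hence each hyperbolic-bubble term is at least $\max(a)^{\frac{2-N}{2}}D$. Therefore $\beta \ge \min\!\big(D_\lambda, \max(a)^{\frac{2-N}{2}}D\big)$ whenever $n_1+n_2\ge 1$, contradicting the hypothesis $\beta < \min\!\big(D_\lambda, \max(a)^{\frac{2-N}{2}}D\big)$. This forces $u\not\equiv 0$ and completes the argument.

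A few technical points I would spell out. One must justify that the minimizing sequence can be taken Palais--Smale: $\mathcal N$ is a $C^1$ manifold (the defining functional $G(u) = \langle \mathcal I_a'(u),u\rangle$ has $\langle G'(u),u\rangle = (2-2^*)\|u\|_\lambda^2 \ne 0$ on $\mathcal N$ away from $0$, since the $2^*$-term dominates appropriately), $\mathcal I_a$ is bounded below and coercive on $\mathcal N$ (indeed $\mathcal I_a(u) = \tfrac1N\|u\|_\lambda^2$ on $\mathcal N$), so Ekeland gives a PS sequence for $\mathcal I_a|_{\mathcal N}$, and the natural-constraint property upgrades it to a PS sequence for $\mathcal I_a$ on all of $H^1(\bn)$. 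One also uses that for a PS sequence the weak limit $u$ is automatically a weak solution of \eqref{Pa}, and positivity of $u$ (when $u\not\equiv 0$) follows from the maximum principle after testing against $u^-$, so the "nontrivial critical point" is in fact a genuine solution of \eqref{Pa}; if the proposition only asserts a nontrivial critical point, the maximum-principle step can be omitted. The main obstacle — and the reason the hypothesis is shaped the way it is — is precisely the contradiction step: one needs the profile decomposition to guarantee that \emph{every} nonzero energy quantum arising from blow-up is bounded below by either $D_\lambda$ (hyperbolic bubbles at infinity) or $\max(a)^{\frac{2-N}{2}}D$ (localized Aubin--Talenti bubbles), and the monotonicity of $t\mapsto t^{(2-N)/2}$ together with $0<a\in C^2$ bounded is what makes the Aubin--Talenti quanta uniformly $\ge \max(a)^{(2-N)/2}D$. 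No fine blow-up analysis is needed here; Theorem \ref{ps_decom} does all the heavy lifting, so this proposition is the "easy" regime, in contrast to the critical-point-at-infinity argument needed when $\beta$ can land in the dangerous range.
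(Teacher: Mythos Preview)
Your proposal is correct and follows essentially the same approach as the paper's (very terse) proof: Ekeland's variational principle on the Nehari manifold produces a Palais--Smale sequence at the infimum level, and the profile decomposition (Theorem~\ref{ps_decom}) forces the weak limit to be nontrivial since every lost-energy quantum is at least $\min\big(D_\lambda,\max(a)^{(2-N)/2}D\big)$. One cosmetic slip: in your contradiction step you refer to the $a(y^k)^{(2-N)/2}D$ contributions as ``hyperbolic-bubble'' terms, but these are the localized Aubin--Talenti ones---the mathematics is unaffected.
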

\begin{proof}
    Using the Ekeland variational principle, there exists a positive PS sequence $(u_n)_n$ for $\mathcal I_a$ (without loss of generality, we may assume that the minimizing sequence is positive)  restricted to $\mathcal N$ at the infimum level. Since $(u_n)_n$ is a PS sequence for $\mathcal I_a$ by definition of $\mathcal N,$ $u_n\rightharpoonup u\in \mathcal N$. $(u_n)_n$ has a strongly convergent subsequence that converges to some minimizer, according to Theorem \ref{ps_decom}. The minimizer is a nontrivial critical point of $\mathcal I_a$, using features of $\mathcal N$.
\end{proof}

We will explore different possibilities to verify the hypothesis as in Proposition \ref{Pro_N_inf}.

\begin{theorem}
    Let $N \geq 4.$ Assume \eqref{a_maincond} and we are in either case $(a)$ or $(b)$, and additionally, one of the following conditions holds:
\begin{enumerate}
       \item[]$(i)$  $a(x)\geq 1$ for all $x\in \mathbb B^N$.
       \medskip
     \item[]$(ii)$  $\int_{\mathbb B^N}\left[a(x)-1\right]\mathcal U^{2^*}\dvg>0.$ 
       
   \end{enumerate}    
\noi  Then \eqref{Pa} admits at least one positive solution.
\end{theorem}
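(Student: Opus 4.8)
The plan is to verify the hypothesis of Proposition \ref{Pro_N_inf}, namely that
\[
\inf_{u\in\mathcal N}\mathcal I_a(u)\,<\,\min\Big(D_\lambda,\,\max(a)^{\frac{2-N}{2}}D\Big),
\]
because once this strict inequality holds, Proposition \ref{Pro_N_inf} immediately yields a positive minimizer of $\mathcal I_a$ on $\mathcal N$, hence a positive solution of \eqref{Pa}. Note first that in cases $(a)$ and $(b)$ we have $\max(a)^{\frac{2-N}{2}}D \geq D_\lambda$ or at worst the two quantities are comparable; but in case $(a)$, $2D_\lambda \le \max(a)^{\frac{2-N}{2}}D$ forces $\min(D_\lambda, \max(a)^{\frac{2-N}{2}}D)=D_\lambda$, and in case $(b)$, $D_\lambda < \max(a)^{\frac{2-N}{2}}D$ again gives $\min = D_\lambda$. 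So in both cases it suffices to show $\inf_{\mathcal N}\mathcal I_a < D_\lambda$, i.e. that the infimum strictly beats the energy level of a single hyperbolic bubble. Equivalently, writing $\mathcal I_a(t_u u) = \frac1N (E_a(u))^{N/2}$, it suffices to produce a test function $u$ with $E_a(u) < S_\lambda$ (recall $D_\lambda = \frac1N S_\lambda^{N/2}$).

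The natural test function is the Mancini–Sandeep ground state $\mathcal U$ itself, possibly composed with a hyperbolic translation. Under hypothesis $(ii)$, $\int_{\mathbb B^N}[a(x)-1]\mathcal U^{2^*}\dvg>0$, so
\[
\int_{\mathbb B^N} a(x)\,\mathcal U^{2^*}\dvg \;>\; \int_{\mathbb B^N}\mathcal U^{2^*}\dvg,
\]
while $\|\mathcal U\|_\lambda^2 = \int_{\mathbb B^N}\mathcal U^{2^*}\dvg = S_\lambda^{N/2}$ since $\mathcal U$ solves \eqref{E}. Hence
\[
E_a(\mathcal U) = \|\mathcal U\|_\lambda^2 \Big(\int_{\mathbb B^N} a\,\mathcal U^{2^*}\dvg\Big)^{-2/2^*} < \|\mathcal U\|_\lambda^2 \Big(\int_{\mathbb B^N}\mathcal U^{2^*}\dvg\Big)^{-2/2^*} = S_\lambda,
\]
giving $\inf_{\mathcal N}\mathcal I_a \le \mathcal I_a(t_{\mathcal U}\mathcal U) = \frac1N (E_a(\mathcal U))^{N/2} < D_\lambda$, as required. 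Under hypothesis $(i)$, i.e. $a \ge 1$ everywhere but possibly $\equiv 1$ on large sets, the strict inequality above can degenerate (if $a\equiv 1$ we recover equality); here one must instead exploit the translation invariance: since $a(x)\to 1$ at infinity and $a\ge 1$, for $\mathcal U_y := \mathcal U\circ\tau_{-y}$ one has $\|\mathcal U_y\|_\lambda^2 = S_\lambda^{N/2}$ by isometry-invariance, and $\int_{\mathbb B^N} a\, \mathcal U_y^{2^*}\dvg = \int_{\mathbb B^N} a(\tau_y(z))\,\mathcal U^{2^*}\dvg$; if $a\not\equiv 1$ then choosing $y$ so that $\mathcal U_y$ concentrates its mass where $a>1$ — concretely, picking any point $x_0$ with $a(x_0)>1$ and translating there — makes $\int a\,\mathcal U_y^{2^*}\dvg > \int \mathcal U^{2^*}\dvg$, again giving $E_a(\mathcal U_y)<S_\lambda$. (If $a\equiv1$ the problem is just \eqref{E}, which already has the solution $\mathcal U$, so that subcase is trivial.)

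The main obstacle is the degenerate subcase of hypothesis $(i)$ where $a\ge 1$ but the set $\{a>1\}$ is ``small'' relative to where a translated bubble can localize — more precisely, making the argument of the previous paragraph quantitative so that the gain $\int_{\mathbb B^N}[a(\tau_y(\cdot))-1]\mathcal U^{2^*}\dvg$ is genuinely positive requires knowing that for some $y$ this integral is positive, which follows from $a\not\equiv 1$, continuity of $a$, and positivity of $\mathcal U$; one should check carefully that composition with the isometry $\tau_{-y}$ leaves both $\|\cdot\|_\lambda$ and $\int (\cdot)^{2^*}\dvg$ invariant (this is where the hyperbolic, rather than Euclidean, structure is essential — a Euclidean rescaled bubble would interact differently with $a$). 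A secondary technical point is ensuring the strict inequality is preserved when passing from $E_a(\mathcal U_y)<S_\lambda$ to $\inf_{\mathcal N}\mathcal I_a<D_\lambda$ and that the resulting minimizer, a priori only nonnegative, is in fact strictly positive; the latter follows from the strong maximum principle applied to the equation \eqref{Pa} once one knows the minimizer is a nontrivial weak solution, which Proposition \ref{Pro_N_inf} provides.
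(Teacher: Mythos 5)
Your proposal is correct and follows essentially the same route as the paper: reduce to verifying the hypothesis of Proposition \ref{Pro_N_inf}, observe that in cases $(a)$ and $(b)$ the relevant threshold is $D_\lambda$, and test with $\mathcal U$ to get $E_a(\mathcal U)<E_\infty(\mathcal U)=S_\lambda$. The only difference is that your detour through hyperbolic translations in case $(i)$ is unnecessary: since $\mathcal U>0$ everywhere on $\mathbb B^N$ and $a-1\geq 0$ is continuous and not identically zero (the case $a\equiv 1$ being known), the integral $\int_{\mathbb B^N}(a-1)\mathcal U^{2^*}\dvg$ is already strictly positive, which is exactly how the paper argues.
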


\begin{proof}
    We know the existence result for $a(x)\equiv 1$. So now we assume $a(x)\not\equiv 1$. Moreover, since either of $(i)$ and $(ii)$ holds, there exists $x_0\in\mathbb B^N$ such that $a(x_0)> 1$. Thus $\max(a)>1$. We are in the case $(a)$ or $(b)$ and consequently, $ \min \Big( D_\lambda\,,\,\, \max (a)^{\frac{2-N}{2}}D\Big)=D_\lambda$. Therefore, thanks to Proposition~\ref{Pro_N_inf},  to conclude the proof it is enough to show that    $\inf_{u\in\mathcal N}\mathcal I_a\,<\,D_\lambda$.
 
\medskip
\noindent

\textbf{Case 1:} when  $a(x)\geq 1
\;\;\forall x\in\bn$, we have
    \begin{align*}
        E_{a}(\mathcal U)&= \frac{\norm{\mathcal U}^2_\la }{     \left(\int_{\mathbb B^N}a(x) |\mathcal U|^{2^*}\dvg(x)\right)^{\frac{2}{2^*}}}\leq \frac{\norm{\mathcal U}^2_\la }{     \left(\int_{\mathbb B^N}  |\mathcal U|^{2^*}\dvg(x)\right)^{\frac{2}{2^*}}}=E_\I(\mathcal U).
    \end{align*}
    Now if $E_{a}(\mathcal U)=E_\I(\mathcal U) $, then
    \begin{align*}
        \int_{\mathbb B^N}a(x)  |\mathcal U|^{2^*}\dvg=\int_{\mathbb B^N}  |\mathcal U|^{2^*}\dvg\implies \int_{\mathbb B^N}\Big(a(x)-1\Big)  |\mathcal U|^{2^*}\dvg=0.
    \end{align*}
    This is a contradiction because $\mathcal U>0$ and $a(x)\nequiv1 $. Hence $E_{a}(\mathcal U)<E_\I(\mathcal U) $. 
    
   \medskip 
    \noindent
    
\textbf{Case 2:}  If $(ii)$ holds, then 
    \begin{align*}
              E_{a}(\mathcal U)= \frac{\norm{\mathcal U}^2_\la }{     \left(\int_{\mathbb B^N}a(x) |\mathcal U|^{2^*}\dvg(x)\right)^{\frac{2}{2^*}}}&=\frac{\norm{\mathcal U}^2_\la }{     \left(\int_{\mathbb B^N}  |\mathcal U|^{2^*}\dvg+ \int_{\mathbb B^N}(a(x)-1)  |\mathcal U|^{2^*}\dvg(x)\right)^{\frac{2}{2^*}}}\\&<\frac{\norm{\mathcal U}^2_\la }{     \left(\int_{\mathbb B^N}  |\mathcal U|^{2^*}\dvg\right)^{\frac{2}{2^*}}}=E_\I(\mathcal U).
    \end{align*}
    Hence in both the cases $E_{a}(\mathcal U)<E_\I(\mathcal U) $. Now, as we observed before, $t_\mathcal{U}\mathcal U\in \mathcal N$ and
 $$\mathcal I_a(t_\mathcal U\mathcal U) =\frac{1}{N}\left( E_a(\mathcal U)\right)^{\frac{N}{2}}< \frac{1}{N}\left( E_\infty(\mathcal U)\right)^{\frac{N}{2}}=\frac{1}{N}S_\lambda^{\frac{N}{2}}=\mathcal I_{1}(\mathcal U)=D_\lambda,$$
which in turn implies $\inf_{u\in\mathcal N}\mathcal I_a\,<\,D_\lambda$. This completes the proof. 
\end{proof}

The case $(c),$ that is, $ \max (a)^{\frac{2-N}{2}}D\leq D_\la,$ will now be examined. To achieve this, we will convert the problem into a singular problem in the Euclidean ball using the conformal change of metric.

\medskip

\textbf{Conformal Change of Metric:} It is well known that the conformal Laplacian or the Yamabe operator $P_{1,\bn}:= -\Delta_{\bn} + \frac{(N-2)}{4(N-1)}R_{\bn} = -\Delta_{\bn} - \frac{N(N-2)}{4}$ is the first order conformal invariant operator where $R_{\bn}:= -N(N-1)$ is the scalar curvature with respect to the metric $g_{\bn}$. This implies that if  $\tilde g = e^{2\psi}g$ is a conformal metric then $P_{1,\tilde g} (u) = e^{-(\frac{N}{2} +1)\psi} P_{1,\bn}(e^{(\frac{N}{2}-1)\psi}u)$ for every smooth function $u.$
Since the Poincar\'e metric is conformal to the Euclidean metric with $\psi (x)= \ln \left(\frac{1-|x|^2}{2}\right)$, we can transform \eqref{Pa} in the Euclidean space as follows:
 Let $u$ be a solution to \eqref{Pa}. Define $\varphi:= e^{-(\frac{N}{2}-1)\psi}=\left(\frac{2}{1-|x|^{2}}\right)^{\frac{N-2}{2}}$. Then, the function $v :=\varphi u$ satisfies
 \begin{align}\label{con-ecl-eq}
 -\Delta v- c(x)v \;=\;a(x)|v|^{2^{\star}-2}v, \ \ \ v \in H^{1}_{0}(B(0,1)),
 \end{align} 
 where $c(x) = \frac{4\lambda-N(N-2)}{\left(1-|x|^{2}\right)^{2}}$. Here, $H^{1}_{0}(B(0,1))$ refers to the Sobolev space consisting of functions defined on  $B(0,1)$ that vanish on the boundary $\partial B(0,1).$  It is important to note that $c(x) > 0$ in $B(0,1)$ provided that $\lambda > \frac{N(N-2)}{4}.$

\medskip
\noindent
Let us denote the energy functional corresponding to \eqref{con-ecl-eq} by
\begin{equation*}
\mathcal J_a(v)=\frac{1}{2} \int_{B(0,1)}\left[|\nabla v|^2- c(x) v^2\right] \mathrm{~d}x-\frac{1}{2^*} \int_{B(0,1)} a(x)|v|^{2^*} \mathrm{~d}x.
\end{equation*}
For any $u \in  H^1(\mathbb B^N)$, define $\tilde{u}:=\left(\frac{2}{1-|x|^2}\right)^{\frac{N-2}{2}} u$. Then $\mathcal I_a(u)=\mathcal J_a(\tilde{u})$. In addition, when $\tilde{v}$ is defined similarly, $\left\langle \mathcal I_a^{\prime}(u), v\right\rangle=\left\langle \mathcal J_a^{\prime}(\tilde{u}), \tilde{v}\right\rangle$.  Also, let $\tilde{\mathcal{N}}$ denote the Nehari Manifold corresponding to $\mathcal J_a$, i.e., 
\be
\mathcal{\tilde{N}}:=\left\{u\in  H_0^1(B(0,1))\setminus\{0\}\,\,\text{ such that }\,\,\langle\,\mathcal J_a'(u),u\,\rangle=0  \right\}. \label{NeManEuc}
\ee
As a result, we get 
\begin{equation}
\inf_{\mathcal{N}} \mathcal I_a= \inf _{\mathcal{\tilde{N}}} \mathcal J_a. \label{con_cha}
\end{equation}
On $\mathcal{\tilde{N}}$, $\mathcal{J}_a$ takes the form
\begin{equation*}
\mathcal J_a(v)=\frac{1}{N}\left(\int_{B(0,1)}\left[|\nabla v|^2- c(x) v^2\right] \mathrm{~d}x\right)^{\frac{N}{2}}\left(\int_{B(0,1)} a(x) |v|^{2^*} \mathrm{~d}x \right)^{\frac{(2-N)}{2}}.
\end{equation*}
We consider the extension of this functional to the whole $H_0^1(B(0,1))$, we call it  $\tilde{\mathcal J}_a$.
 \begin{theorem} \label{Exi-res1}
     Let $N \geq 4.$ Assume that \eqref{a_maincond} is satisfied and the assumption c) holds. Then at least one positive solution of \eqref{Pa} exists. 
 \end{theorem}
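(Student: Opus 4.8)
The plan is to show that under assumption (c), namely $\max(a)^{\frac{2-N}{2}}D \le D_\lambda$, the infimum $\inf_{\tilde{\mathcal N}}\mathcal J_a$ is strictly below the first breaking level of the Palais--Smale condition, so that Proposition \ref{Pro_N_inf} (via the identification \eqref{con_cha}) produces a positive solution. Since in case (c) we have $\min(D_\lambda,\max(a)^{\frac{2-N}{2}}D)=\max(a)^{\frac{2-N}{2}}D$, it suffices to construct a test function $v$ on the ball with $\mathcal J_a(t_v v) < \frac{1}{N}\big(\max(a)^{\frac{2-N}{2}}D\big)$, equivalently $E_a(v) < \max(a)^{\frac{2-N}{2}}S$ for the Euclidean-side energy ratio. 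The natural candidate is a truncated, rescaled Aubin--Talenti bubble $V^{\varepsilon,y_0}$ concentrating at a point $y_0$ where $a$ attains (or nearly attains) its maximum, multiplied by a fixed cutoff $\phi$ supported in $B(0,1)$ — exactly the profile appearing in \eqref{seq-vn}.

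First I would fix $y_0 \in \mathbb B^N$ with $a(y_0)=\max(a)$ (such a point exists since $a\in C^2$, $a\to 1$ at infinity and — in the interesting case — $\max(a)>1$; if $\max(a)=1$ we reduce to the known Mancini--Sandeep solution), choose $R$ with $\overline{B(y_0,R)}\subset B(0,1)$, and set $v_\varepsilon := \phi(\cdot - y_0)\,\varepsilon^{\frac{2-N}{2}}V\!\big(\frac{\cdot - y_0}{\varepsilon}\big)$. Then I would expand, as $\varepsilon \to 0$, the three quantities $\int|\nabla v_\varepsilon|^2$, $\int c(x)v_\varepsilon^2$, and $\int a(x)|v_\varepsilon|^{2^*}$ using the standard Brezis--Nirenberg-type estimates: $\int|\nabla v_\varepsilon|^2 = S^{N/2} + O(\varepsilon^{N-2})$, $\int |v_\varepsilon|^{2^*} = S^{N/2} + O(\varepsilon^{N})$, and $\int a(x)|v_\varepsilon|^{2^*} = a(y_0)S^{N/2} + O(\varepsilon^2)$ (using $a\in C^2$ and $\nabla a(y_0)=0$ if $y_0$ is interior). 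The crucial gain must come from the potential term $\int c(x) v_\varepsilon^2$, where $c(x)=\frac{4\lambda-N(N-2)}{(1-|x|^2)^2}>0$; since $c$ is bounded below by a positive constant near $y_0$, a direct computation gives $\int c(x)v_\varepsilon^2 \gtrsim \varepsilon^2$ for $N \ge 5$ (and $\gtrsim \varepsilon^2|\log\varepsilon|$ for $N=5$, $\gtrsim \varepsilon^2$ for $N\ge 6$ — actually $N=4$ needs the $\varepsilon^2|\log\varepsilon|$ refinement), which dominates the $O(\varepsilon^{N-2})$ error from the gradient term precisely when $N\ge 4$. Assembling these expansions yields
\[
E_a(v_\varepsilon)=\frac{S^{N/2}+O(\varepsilon^{N-2}) - c_0\varepsilon^2(1+o(1))}{\big(a(y_0)S^{N/2}+O(\varepsilon^2)\big)^{2/2^*}} < \frac{S}{a(y_0)^{(N-2)/N}} = \max(a)^{\frac{2-N}{2}} S
\]
for $\varepsilon$ small, with $c_0>0$; hence $\mathcal J_a(t_{v_\varepsilon}v_\varepsilon) < \frac{1}{N}\big(\max(a)^{\frac{2-N}{2}}D\big) = \min\big(D_\lambda, \max(a)^{\frac{2-N}{2}}D\big)$.

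Then I would invoke Proposition \ref{Pro_N_inf}: the strict inequality $\inf_{\mathcal N}\mathcal I_a = \inf_{\tilde{\mathcal N}}\mathcal J_a < \min(D_\lambda, \max(a)^{\frac{2-N}{2}}D)$ forces the minimizing (Ekeland) PS sequence to converge strongly — by Theorem \ref{ps_decom}, any loss of compactness would place the level at or above $\min(D_\lambda,\max(a)^{\frac{2-N}{2}}D)$, since a single hyperbolic bubble contributes $D_\lambda$ and a single Aubin--Talenti bubble contributes at least $\max(a)^{\frac{2-N}{2}}D$ — so the infimum is attained at a nontrivial nonnegative critical point, which by the strong maximum principle (and elliptic regularity, transporting back to $\mathbb B^N$ via $u = \varphi^{-1}v$) is a positive solution of \eqref{Pa}. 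The main obstacle is the energy expansion in the low-dimensional borderline $N=4$: there the gradient error $O(\varepsilon^{N-2})=O(\varepsilon^2)$ is of the same order as the potential gain, so one must compute the constant in $\int c(x)v_\varepsilon^2$ sharply (it behaves like $\varepsilon^2|\log\varepsilon|$ for $N=4$ because $\int_{|x|<R/\varepsilon}(1+|x|^2)^{2-N}\,dx$ is logarithmically divergent) and verify it genuinely beats the gradient correction; for $N\ge 5$ this is automatic. A secondary technical point is handling the case where $\max(a)$ is attained only in the limit $d(x,0)\to\infty$ (where $a\to 1$): then one takes $y_0$ far out with $a(y_0)$ close to $\max(a)=1$, but then $\max(a)^{(2-N)/2}D = D < D_\lambda$ is impossible under $\lambda>\frac{N(N-2)}{4}$, so case (c) with $\max(a)=1$ cannot occur and only the interior-maximum scenario is relevant.
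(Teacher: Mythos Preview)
Your proposal is correct and follows essentially the same route as the paper: one concentrates a cut-off Aubin--Talenti bubble $v_\varepsilon=\phi(\cdot-y_0)V^{\varepsilon,y_0}$ at a maximum point $y_0$ of $a$, performs the Brezis--Nirenberg expansions, and uses the strictly positive contribution of the singular potential $c(x)=\frac{4\lambda-N(N-2)}{(1-|x|^2)^2}$ (of order $\varepsilon^2$ for $N\ge5$, $\varepsilon^2|\log\varepsilon|$ for $N=4$) to push $\tilde{\mathcal J}_a(v_\varepsilon)$ strictly below $\max(a)^{(2-N)/2}D$, after which Proposition~\ref{Pro_N_inf} applies. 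One small slip: in your displayed inequality the threshold for the ratio $E_a$ should be $\max(a)^{(2-N)/N}S$, not $\max(a)^{(2-N)/2}S$ (your intermediate expression $S/a(y_0)^{(N-2)/N}$ is the correct one); this does not affect the argument, since the conclusion $\mathcal J_a(t_{v_\varepsilon}v_\varepsilon)<\max(a)^{(2-N)/2}D$ is stated correctly.
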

 \begin{proof}
 Note that condition $(c)$ implies that there exists some $x_0\in\bn$ such that $a(x_0)>1$. If not, then $a(x)\leq 1 \;\;\forall x\in\bn$, which in turn would imply $\max a^\frac{2-N}{2}\geq 1$ and thus $D\leq \max a^\frac{2-N}{2} D\leq D_\la$, which contradicts the fact that $D_\la<D$. Therefore, $\max a>1$. Let $ y \in \mathbb{B}^N$ such that $a(y)=\max (a)$. Now observe that, to prove this theorem, it suffices to show that, $\tilde{\mathcal J}_a (\mathcal{\tilde V} )<a(y)^{(2-N) / 2} D$ for some $\mathcal{\tilde V} \in H_0^1(B(0,1))$. Indeed, for this $\mathcal{\tilde V}$, we can find $t_\mathcal{\tilde V}$ such that $t _ \mathcal{\tilde V} \mathcal{\tilde V} \in \mathcal{\tilde{N}}$, then performing an easy computation yields
     $$\mathcal J_a(t_\mathcal{\tilde V} \mathcal{\tilde V})= \tilde{\mathcal J}_a (\mathcal{\tilde V} ).$$
     Consequently, using the above equality and \eqref{con_cha}, we obtain
     $$\inf_{\mathcal{N}} \mathcal I_a=\inf _{\mathcal{\tilde{N}}} \mathcal J_a \leq \mathcal J_a(t_\mathcal{\tilde V} \mathcal{\tilde V}) = \tilde{\mathcal J}_a (\mathcal{\tilde V} )< a(y)^{(2-N) / 2} D.$$
 We can conclude the proof by combining the above estimate with Proposition~\ref{Pro_N_inf}. Therefore, we now aim to prove $\tilde{\mathcal J}_a (\mathcal{\tilde V} )<a(y)^{(2-N) / 2} D$ for some $\mathcal{\tilde V} \in H_0^1(B(0,1))$.
 
 \medskip 
 \noindent
 To this end, define
$$\mathcal{\tilde V}(x) := V^{\varepsilon, y}(x) \phi(x-y),$$
where $V$ is the unique positive solution of \eqref{Talenti} and $\phi \in C_c^{\infty}\left(\mathbb{R}^N\right)$ such that $0 \leq \phi \leq 1,\; \phi(z)=1$ for $|z|<r$ and $\phi(x)=0$ for $|z|>R$, and $R$ is chosen such that $B(y,R) \subseteq B(0,1).$ Set, $c(N):=[N(N-2)]^\frac{N-2}{4}$, $B:= B(0,1)$ and we estimate each term of $\tilde{\mathcal J}_a (\mathcal{\tilde V} )$ for the above chosen $\tilde{\mathcal{V}}$ as follows:
\begin{align*}
    \int_{B}|\nabla \mathcal{\tilde V}(x)|^2 \mathrm{~d}x &=  (c(N))^2  \int_{B} \left|\nabla\left(\[\frac{\varepsilon}{\varepsilon^2 +|x-y|^2}\]^\frac{N-2}{2} \phi(x-y)\right)\right|^2 \;{\rm d}x\\
       &= (c(N))^2  \int_{B} \left|\nabla\left(\[\frac{\varepsilon}{\varepsilon^2 +|z|^2}\]^\frac{N-2}{2} \phi(z) \right)\right|^2\mathrm{~d}z\\
       &= S^{\frac{N}{2}} + O(\varepsilon^{N-2})\\
       &= N D + O(\varepsilon^{N-2}).
    \end{align*}
  Next, if we define $\tilde \la=\la-\frac{N(N-2)}{4}$ then
  \begin{align*}
    \int_{B} c(x) |{\mathcal{\tilde V}(x)}|^2\mathrm{~d}x  &= \tilde\lambda (c(N))^2
\int_{B} \left(\frac{2}{1-|x|^2}\right)^2 \left|\[\frac{\varepsilon}{\varepsilon^2 +|x-y|^2}\]^\frac{N-2}{2} \phi(x-y)\right|^2\;{\rm d}x\\
& \geq \tilde\lambda (c(N))^2 C\int_{B} \left|\[\frac{\varepsilon}{\varepsilon^2 +|z|^2}\]^\frac{N-2}{2} \phi(z)\right|^2\mathrm{~d}z \\
&\geq \begin{cases}
C\tilde\la\eps^2+O(\eps^{N-2})   \qquad  \text{ if }\,\, N>4,\\
C\tilde\la\eps^2|\ln \eps|+O(\eps^{2}) \quad\text{ if }\,\, N=4.\\
\end{cases}
 \end{align*}  
Finally, since $y$ is the point of maximum, we have
\begin{align*}
\int_{B} a(x) |{\mathcal{\tilde V}(x)}|^{2^*} \mathrm{~d}x &= \underbrace{\int_{B} a(y) |{\mathcal{\tilde V}(x)}|^{2^*} \mathrm{~d}x}_{(I)} + \underbrace{\int_{B} [a(x)-a(y)-\langle\nabla a(y) \mid x-y\rangle]({\mathcal{\tilde V}(x)})^{2^*} \mathrm{~d}x}_{(II)}.
\end{align*}
First, we consider $(I)$
\begin{align*}
    (I):= \int_{B} a(y) |{\mathcal{\tilde V}(x)}|^{2^*} \mathrm{~d}x  &= (c(N))^{2^*}a(y) \int_{B(y,R)} \left|\[\frac{\varepsilon}{\varepsilon^2 +|x-y|^2}\]^\frac{N-2}{2} \phi(x-y)\right|^{2^*} \mathrm{~d}x\\
    & =(c(N))^{2^*}a(y)\int_{B} \left|\[\frac{\varepsilon}{\varepsilon^2 +|z|^2}\]^\frac{N-2}{2} \phi(z)\right|^{2^*}\mathrm{~d}z\\
    &= a(y)N D+O(\eps^N).
\end{align*}
Consider the integral $(II)$. Since $a$ is twice differentiable, bounded and asymptotically tends to $1$, there exists $C>0$ such that
$$|a(x)-a(y)-\langle\nabla a(y) \mid x-y\rangle|\leq C\frac{|x-y|^2}{1+|x-y|^2}.$$
Therefore, 
\begin{align*}
    |(II)|&\leq C \int_{B} \frac{|x-y|^2}{1+|x-y|^2}({\mathcal{\tilde V}(x)})^{2^*} \mathrm{~d}x
    = (c(N))^{2^*}\int_{B(0,R)} \frac{|z|^2}{1+|z|^2} \left|\[\frac{\varepsilon}{\varepsilon^2 +|z|^2}\]^\frac{N-2}{2} \phi(z)\right|^{2^*}\mathrm{~d}z\\
    & = (c(N))^{2^*}\int_{B(0,\frac{R}{\eps})} \frac{|\eps z|^2}{1+|\eps z|^2} \left| \left[\frac{1}{1+|z|^2}\right]^\frac{N-2}{2} \phi(\eps z)\right|^{2^*}\mathrm{~d}z
     \leq C \varepsilon^2 |V|_{L^{2^*}(\Rn)}^{2^*}= C \eps^2. 
    \end{align*}
 Thus   $$\int_{B} a(x)|{\mathcal{\tilde V}(x)}|^{2^*} \mathrm{~d}x=a(y) N D+O\left(\varepsilon^2\right).$$
    Now combining all the above estimates, for $N\geq 5$ we have 
    \begin{align*}
        \tilde{\mathcal J}_a (\mathcal{\tilde V})\leq& \frac{1}{N}\frac{\[N D + O(\varepsilon^{N-2})- C\tilde\la\varepsilon^{2}\]^\frac{N}{2}}{\[a(y) N D+O\left(\varepsilon^2\right)\]^\frac{N-2}{2}} 
         \approx a(y)^\frac{2-N}{2} D + O(\varepsilon^{N-2})- C\tilde\la\varepsilon^{2} <a(y)^\frac{2-N}{2} D,
    \end{align*}
    for $\eps>0$ small enough. \\
    For $N=4$
    \begin{align*}
        \tilde{\mathcal J}_a (\mathcal{\tilde V})\leq& \frac{1}{N}\frac{\[N D + O(\varepsilon^{2})- C\tilde\la\varepsilon^{2}|\ln\eps|\]^2}{\[a(y) N D+O\left(\varepsilon^2\right)\]} 
 \leq a(y)^{-1} D -C\tilde\la\varepsilon^{2}|\ln\eps| + O(\varepsilon^{2})
        <a(y)^{-1} D,
    \end{align*}
    for $\eps>0$ small enough. 
 \end{proof}
\medskip
\subsection{Higher Index Solutions: } In this subsection, $\inf_{u\in\mathcal N}\mathcal I_a\,\geq D_{\lambda}$ is the scenario under examination. Previously, using Proposition \ref{Pro_N_inf}, we discussed the existence of solutions when the opposite inequality was considered in the preceding subsection. To find a solution in this case, we utilize the min-max level and require the Lemma \ref{enerEstLem}. Before moving forward, note that on $\mathcal{N}$,  $\mathcal I_{a}$ can can be expressed as:
\begin{equation*}
\mathcal I_a(u)=\frac{1}{N}\left(\int_{ \mathbb B^N}\Big[|\nabla_{\mathbb B^N}u|^2-\lambda u^2\Big]\dvg\right)^{N / 2}\left(\int_{\mathbb B^N}a(x)|u|^{2^*}\dvg\right)^{(2-N) / 2},
\end{equation*}
which is beneficial because it is homogeneous of order zero. Thus, it can be useful to extend this homogeneous functional of order zero to the entire $H^{1}(\mathbb{B}^N);$ we refer to this extension as $\tilde{\mathcal I}_a$.\\
Firstly, we claim the existence of a positive constant $R_{1}$ for which the following holds:
		$$\tilde{\mathcal I}_a(\mathcal{U}(\tau_{-y}(\cdot))) \leq D_\lambda\quad \text{for every} \quad y \in \mathbb{B}^{N} \quad \text{with} \quad d(y,0) \geq R_{1}.$$
		To justify this assertion, we utilize \eqref{a_secondcond} and apply Fatou's lemma, yielding:
  \begin{equation}
			\begin{aligned}
				\int_{\mathbb{B}^{N}}|\mathcal{U}(x)|^{2^*} \mathrm{~d} V_{\mathbb{B}^{N}}(x) &
				 \leq \liminf\limits_{d(y,0)\rightarrow \infty} \int_{\mathbb{B}^{N}}  a(\tau_{y}(x))|\mathcal{U}(x)|^{2^*} \mathrm{~d} V_{\mathbb{B}^{N}}(x). \label{4.ut}
			\end{aligned} 
		\end{equation}
Therefore, we conclude that
  \begin{equation}
		\begin{aligned}
			\lim _{d(y,0) \rightarrow \infty} \tilde{\mathcal I}_a(\mathcal{U}(\tau_{-y}(\cdot))) & = \frac{1}{N}  \frac{\|\mathcal{U}\|_{\lambda}^{N}}{\liminf\limits_{d(y,0)\rightarrow \infty}\left(\int_{\mathbb{B}^{N}} a(\tau_{y}(x))|\mathcal{U}(x)|^{2^*} \mathrm{~d} V_{\mathbb{B}^{N}}\right)^{\frac{N}{2^*}}}
			 \leq \frac{1}{N}\frac{\|\mathcal{U}\|_{\lambda}^{N}}{\left(\int_{\mathbb{B}^{N}}|\mathcal{U}(x)|^{2^*} \mathrm{~d} V_{\mathbb{B}^{N}}\right)^{\frac{N}{2^*}}}\\
			& = \mathcal I_1(\mathcal{U})=D_\lambda, \\
		\end{aligned}\label{upbd}
  \end{equation}
		where we have used \eqref{4.ut} to get the last inequality. Hence the claim follows.

 \begin{lemma}\label{enerEstLem}
     Let $a(.)$ satisfy \eqref{a_maincond}, and there exist some positive constants $C$ and $\delta > 0$ such that 
     \begin{equation}\tag{\textbf{A1}}\label{a_secondcond}
     a(x) \geqslant 1 -\operatorname{C \, exp}(-\delta \, d(x,0))  \quad \forall \ d(x, 0) \rightarrow \infty,
     \end{equation}
and $\mathcal U$ be the unique positive solution of \eqref{E}. Then, there exists a large number $R_{0}$, such that for any $R \geq R_{0},$ and for $d(x_{1},0) \geq R^{\alpha},\; d(x_{2},0)\geq R^{\alpha},\; R^{\alpha^{\prime}} \leq d\left(x_{1},x_{2}\right) \leq  R^{\alpha^{\prime}-\alpha} \min \left\{d(x_{1},0), \;d(x_{2},0)\right\}$, where $\alpha>\alpha^{\prime}>1$,  the following inequality holds
	\begin{equation}
		\tilde{\mathcal I}_a\left(t u_{1}+(1-t) u_{2}\right)<2 D_\lambda, \label{ener_est}
	\end{equation}
	where $0 \leq t \leq 1$ and $ u_{i}=\mathcal U\left(\tau_{-x_{i}}(\cdot)\right)$ for $i=1,2$.
 \end{lemma}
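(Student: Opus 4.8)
The plan is to parametrize the segment by $w_t := t u_1 + (1-t) u_2$, $t \in [0,1]$, and to expand
\[
\tilde{\mathcal I}_a(w_t) = \frac1N\,\|w_t\|_\lambda^{N}\Big(\int_{\bn}a\,|w_t|^{2^*}\dvg\Big)^{-(N-2)/2}
\]
to first order in the bubble--bubble interaction. Since $u_i = \mathcal U\circ\tau_{-x_i}$ solves \eqref{E} and $\tau_{-x_i}$ is an isometry, $\|u_i\|_\lambda^{2} = \|\mathcal U\|_\lambda^{2} =: A = N D_\lambda$, and, writing $\omega := \langle u_1, u_2\rangle_\lambda = \int_{\bn}u_1^{2^*-1}u_2\,\dvg = \int_{\bn}u_1 u_2^{2^*-1}\,\dvg > 0$, the numerator is \emph{exactly} $\|w_t\|_\lambda^{2} = p(t)\,A + 2t(1-t)\,\omega$ with $p(t):=t^{2}+(1-t)^{2}$. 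The structural reason $2D_\lambda$ is the right threshold is the following: setting $q(t):=t^{2^*}+(1-t)^{2^*}$, convexity of $s\mapsto s^{N/(N-2)}$ gives $p(t)^{N/(N-2)}\le 2^{2/(N-2)}q(t)$, hence $\frac1N A\, p(t)^{N/2} q(t)^{-(N-2)/2}\le 2D_\lambda$ with equality \emph{only} at $t=\tfrac12$; so near $t=\tfrac12$ everything hinges on the sign of the interaction correction, which I will show is strictly negative.

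The technical heart is a package of interaction estimates, valid for $R$ large and uniformly over the admissible configurations: (a) $\omega\asymp e^{-\cn d(x_1,x_2)}$, the lower bound obtained by restricting $\int u_1^{2^*-1}u_2$ to $B_1(x_1)$ and using \eqref{bubble decay}; (b) $\int_{\bn}a\,u_i^{2^*}\dvg\ge A - o(\omega)$; (c) $\int_{\bn}a\,u_1^{2^*-1}u_2\,\dvg\ge(1-o(1))\omega$ and the same with $u_1,u_2$ swapped; (d) $\int_{\bn}\big|h(tu_1,(1-t)u_2)\big|\dvg = o(\omega)$ uniformly for $t$ in compact subsets of $(0,1)$, where $h(s_1,s_2):=(s_1+s_2)^{2^*}-s_1^{2^*}-s_2^{2^*}-2^*s_1^{2^*-1}s_2-2^*s_1 s_2^{2^*-1}$. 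The three ``$o(\omega)$'' statements rest on two mechanisms. For (b) and the $a-1$ part of (c), the relevant mass sits near $x_i$, where $d(\cdot,0)$ is comparable to $d(x_i,0)\ge R^{\alpha}$, so the error is $O\big(e^{-\delta d(x_i,0)}\big)$ by \eqref{a_secondcond}; since the constraint $d(x_1,x_2)\le R^{\alpha'-\alpha}\min\{d(x_1,0),d(x_2,0)\}$ with $\alpha>\alpha'$ gives $\cn d(x_1,x_2)-\delta d(x_i,0)\le d(x_i,0)\big(\cn R^{\alpha'-\alpha}-\delta\big)\to-\infty$, this beats $e^{-\cn d(x_1,x_2)}\asymp\omega$. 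For (d), one uses $|h(s_1,s_2)|\lesssim s_1^{2^*-1-\eps}s_2^{1+\eps}+s_1^{1+\eps}s_2^{2^*-1-\eps}$ for a small $\eps>0$ (possible only because $2^*>2$), together with the fact that $\int_{\bn}u_1^{p_1}u_2^{p_2}\dvg = o(\omega)$ whenever $p_1+p_2=2^*$ and $\min(p_1,p_2)>1$ --- such an integrand being governed by the endpoint weight $\min(p_1,p_2)>1$ along the geodesic $[x_1,x_2]$. I expect carrying out (a)--(d) with the required uniformity in the configuration to be the main obstacle; they are standard interaction estimates for exponentially decaying profiles, but need some care with the hyperbolic geometry near and between $x_1,x_2$.

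Granting (a)--(d), expanding $|w_t|^{2^*}$ via the identity defining $h$ and inserting (b),(c),(d) gives, uniformly for $t$ in a fixed compact subinterval of $(0,1)$,
\[
\int_{\bn}a\,|w_t|^{2^*}\dvg\ \ge\ q(t)\,A + r(t)\,\omega - o(\omega),\qquad r(t):=2^*\big(t^{2^*-1}(1-t)+t(1-t)^{2^*-1}\big),
\]
whence $\tilde{\mathcal I}_a(w_t)\le \frac1N A\,\dfrac{p(t)^{N/2}}{q(t)^{(N-2)/2}}\Big[1+\dfrac{\omega}{A}\,g(t)+o(\omega)\Big]$ with $g(t):=\frac N2\cdot\frac{2t(1-t)}{p(t)}-\frac{N-2}{2}\cdot\frac{r(t)}{q(t)}$. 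A direct computation gives $g(\tfrac12)=\tfrac N2-\tfrac{N-2}{2}\cdot 2^*=\tfrac N2-N=-\tfrac N2<0$ (using $2^*\cdot\tfrac{N-2}{2}=N$). I would then conclude by a short case analysis over $t\in[0,1]$: (i) for $|t-\tfrac12|\le\rho$ with $\rho$ fixed small, $g(t)\le -\kappa<0$, so $\tilde{\mathcal I}_a(w_t)\le 2D_\lambda\big(1+g(t)\tfrac{\omega}{A}+o(\omega)\big)<2D_\lambda$ for $R$ large; (ii) for $\eta\le t\le 1-\eta$ with $|t-\tfrac12|\ge\rho$, the prefactor $\frac1N A\,p^{N/2}q^{-(N-2)/2}$ is $\le(2-\gamma)D_\lambda$ for some $\gamma=\gamma(\eta,\rho)>0$ (compactness plus the equality case above), so $\tilde{\mathcal I}_a(w_t)\le(2-\gamma)D_\lambda(1+O(\omega))<2D_\lambda$ for $R$ large; (iii) for $t\in[0,\eta]\cup[1-\eta,1]$, one checks that $\|w_t\|_\lambda^{2}$ and $\int_{\bn}a|w_t|^{2^*}\dvg$ stay in fixed compact subintervals of $(0,\infty)$ uniformly in the configuration and in $t$, so $t\mapsto\tilde{\mathcal I}_a(w_t)$ is Lipschitz with a uniform constant $C$, whence $\tilde{\mathcal I}_a(w_t)\le\max\{\tilde{\mathcal I}_a(u_1),\tilde{\mathcal I}_a(u_2)\}+C\eta\le D_\lambda+C\eta<2D_\lambda$ once $\eta$ is fixed small (using the bound $\tilde{\mathcal I}_a(\mathcal U(\tau_{-x_i}(\cdot)))\le D_\lambda$ for $d(x_i,0)$ large established just before the lemma). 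Taking $R_0$ so large that all the $o(\cdot)$ and $O(\omega)$ terms above are small enough finishes the proof.
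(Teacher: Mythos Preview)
Your proposal is correct and follows essentially the same approach as the paper: compute $\|w_t\|_\lambda^2$ exactly, lower-bound $\int_{\bn}a|w_t|^{2^*}\dvg$ to first order in the interaction $\omega=\langle u_1,u_2\rangle_\lambda$ via the estimates you label (a)--(d) (which the paper cites from \cite{GGS1,GGS2}), and exploit that $p(t)^{N/2}q(t)^{-(N-2)/2}\le 2$ with equality only at $t=\tfrac12$, where the first-order correction is strictly negative. The only cosmetic differences are that the paper uses the pointwise inequality $(a+b)^{2^*}\ge a^{2^*}+b^{2^*}+(2^*-1)(a^{2^*-1}b+ab^{2^*-1})$ in place of your remainder $h$, and splits simply into $t=\tfrac12$ versus $t\neq\tfrac12$ rather than your three-case analysis (your case~(iii) is unnecessary, since $p^{N/2}q^{-(N-2)/2}\to 1$ at the endpoints).
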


 \begin{proof}
The proof proceeds along the same lines as in \cite[Lemma 4.2]{GGS2}. We shall sketch the proof here for the sake of completeness.      
  Set, $A:=\|\mathcal U\|_{\lambda}^{2}=\int_{\mathbb{B}^{N}} \mathcal U(x)^{2^*} \mathrm{~d} V_{\mathbb{B}^{N}}.$ Thus, $D_\lambda= \frac{1}{N} A$.\\
First, we shall prove \eqref{ener_est} for $t=\frac{1}{2}$. For $u:= \frac{u_1+u_2}{2},$ we have 
\begin{equation*}
\tilde{\mathcal I}_a(u)=\frac{1}{N}\|u\|_\lambda^N\underbrace{\left(\int_{\mathbb B^N}a(x)|u|^{2^*}\dvg(x)\right)^{(2-N) / 2}}_{I},
\end{equation*}
where
\begin{align}\label{est1}
     \|u\|_\lambda^2=\frac{1}{4}\[\left\|u_{1}\right\|_{\lambda}^{2}+\left\|u_{2}\right\|_{\lambda}^{2}+2\left\langle u_{1}, u_{2}\right\rangle_{\lambda}\]&= \frac{1}{2}\[A+ \left\langle u_{1}, u_{2}\right\rangle_{\lambda}\]\\
     &=\frac{1}{2}\[N D_\lambda+ \int_{\mathbb{B}^{N}} u_{1}^{2^*-1} u_{2} \mathrm{~d} V_{\mathbb{B}^{N}}\],
\end{align}
where we have used the fact that $u_{1}, u_{2}$ also solve \eqref{E}. Thereafter,
   \begin{align}
       I^\frac{2}{2-N}&:= \int_{\mathbb B^N}a(x)|u|^{2^*}\dvg(x)=\underbrace{\int_{\mathbb{B}^{N}}\left(u_{1}+u_{2}\right)^{2^*} \mathrm{~d} V_{\mathbb{B}^{N}}}_{I_1}-\underbrace{\int_{\mathbb{B}^{N}}(1-a(x))\left(u_{1}+u_{2}\right)^{2^*} \mathrm{~d} V_{\mathbb{B}^{N}}}_{I_2}. \notag
   \end{align}
Note that for any non-negative real numbers $a$ and $b$, the following holds
	\begin{equation*}
		(a+b)^{2^*} \geq a^{2^*}+b^{2^*}+(2^*-1)\left(a^{2^*-1} b+a b^{2^*-1}\right). 
	\end{equation*}
 Therefore, 
 \begin{align}  \label{est2}
     I_1= \int_{\mathbb{B}^{N}}\left(u_{1}+u_{2}\right)^{2^*} \mathrm{~d}V_{\mathbb{B}^{N}} \geq 2 (A+(2^*-1)\left\langle u_{1}, u_{2}\right\rangle_{\lambda})= 2(N D_\lambda+ (2^*-1)\int_{\mathbb{B}^{N}} u_{1}^{2^*-1} u_{2}\mathrm{~d} V_{\mathbb{B}^{N}}).
 \end{align}
 Now following the arguments as in \cite[pg. 15-17]{GGS1}, we obtain
 \begin{align}   
     I_2 := \int_{\mathbb{B}^{N}}(1-a)\left(u_{1}+u_{2}\right)^{2^*} \mathrm{~d} V_{\mathbb{B}^{N}} \leq o(1)\left(\int_{\mathbb{B}^{N}} u_{1}^{2^*-1} u_{2} \mathrm{~d} V_{\mathbb{B}^{N}}\right) \label{est3}
 \end{align}
 for large $R>0$ and $\delta < 2^* c(N, \lambda)$. In these calculations, we utilized the hypothesis $d\left(x_{1},x_{2}\right) \leq R^{\alpha^{\prime}-\alpha}d(x_i,0)$ for $i=1,2.$\\
Combining the estimates \eqref{est1}-\eqref{est3}, we obtain
 \begin{align*}
\tilde{\mathcal I}_a(u)&
\leq\frac{1}{N}\left(\frac{1}{2}\[N D_\lambda+ \int_{\mathbb{B}^{N}} u_{1}^{2^*-1} u_{2}\mathrm{~d} V_{\mathbb{B}^{N}}\] \right)^\frac{N}{2}\times\\
&\qquad\quad\left(2 N D_\lambda+2(2^*-1)\int_{\mathbb{B}^{N}} u_{1}^{2^*-1} u_{2}\mathrm{~d} V_{\mathbb{B}^{N}}-o(1)\int_{\mathbb{B}^{N}} u_{1}^{2^*-1} u_{2} \mathrm{~d} V_{\mathbb{B}^{N}}\right)^\frac{2-N}{2}\\
&\leq \frac{1}{N} \left(\frac{N D_{\lambda}}{2}\right)^\frac{N}{2}\left(2N D_\lambda\right)^\frac{2-N}{2}\left(1+\frac{\int_{\mathbb{B}^{N}} u_{1}^{2^*-1} u_{2}\mathrm{~d} V_{\mathbb{B}^{N}}}{N D_\lambda}\right)^\frac{N}{2}\times\\
&\qquad\qquad\qquad\qquad\left(1+\frac{(2^*-1-o(1))\int_{\mathbb{B}^{N}} u_{1}^{2^*-1} u_{2}\mathrm{~d} V_{\mathbb{B}^{N}}}{ND_\lambda}\right)^\frac{2-N}{2}\\
& \leq 2 D_\lambda \bigg(1+\big(\frac{D_\lambda ^{-1}}{2}+o(1)\big)\int_{\mathbb{B}^{N}} u_{1}^{2^*-1} u_{2}\mathrm{~d} V_{\mathbb{B}^{N}}\bigg)\bigg(1-\big(\frac{N+2}{2N}D_\lambda ^{-1}+o(1)\big)\int_{\mathbb{B}^{N}} u_{1}^{2^*-1} u_{2}\mathrm{~d} V_{\mathbb{B}^{N}}\bigg)\\
& \leq 2 D_\lambda - \bigg(\frac{2}{N}+o(1)\bigg)\int_{\mathbb{B}^{N}} u_{1}^{2^*-1} u_{2}\mathrm{~d} V_{\mathbb{B}^{N}}<2D_\lambda, 
\end{align*}
 for large $R$, using \cite[Lemma 4.1]{AX14618}.  \\
Now for $t\in[0,1]\setminus \frac{1}{2}$, set
				$v_{1}=t u_{1}$ and  $v_{2}=(1-t) u_{2}$. Then, a straightforward computation (see \cite[Lemma 4.2]{GGS1} for further details) implies
				\begin{align*}
\tilde{\mathcal{I}}_a \left(v_{1}+v_{2}\right) &=\frac{1}{N}\left(\left(t^{2}+(1-t)^{2}\right) A+2 t(1-t)\left\langle u_{1}, u_{2}\right\rangle_{\la}\right)^\frac{N}{2}\times\\
&\qquad\left(\int_{\mathbb{B}^{N}}\left|v_{1}+v_{2}\right|^{2^*} \mathrm{~d} V_{\mathbb{B}^{N}}-\int_{\mathbb{B}^{N}}(1-a)\left|v_{1}+v_{2}\right|^{2^*} \mathrm{~d} V_{\mathbb{B}^{N}}\right)^\frac{2-N}{2}\\
&\leq \frac{1}{N} \left(\left(t^{2}+(1-t)^{2}\right) A+2 t(1-t)\left\langle u_{1}, u_{2}\right\rangle_{\la}\right)^\frac{N}{2}\times\\
&\qquad\left[\left(t^{2^*}+(1-t)^{2^*}\right) A+(2^*-1)\left(t^{2^*-1}(1-t)+t(1-t)^{2^*-1}-o(1)\right)\left\langle u_{1}, u_{2}\right\rangle_{\lambda}\right]^{\frac{2-N}{2}}\\
& \leq \frac{1}{N} A \frac{\left(t^{2}+(1-t)^{2}\right)^\frac{N}{2}}{\left(t^{2^*}+(1-t)^{2^*}\right)^{\frac{N-2}{2}}} + o(1) < 2 D_{\lambda},
\end{align*}
for large $R$ and using the fact that 
$\frac{\left(t^{2}+(1-t)^{2}\right)^\frac{N}{2}}{\left(t^{2^*}+(1-t)^{2^*}\right)^{\frac{N-2}{2}}}<2$ for $t \neq 1/2.$ 
 \end{proof}
Now, select $R_0$ as specified in Lemma \ref{enerEstLem}, and then choose $R_2$ such that $R_{2}>\max \left\{R_{0}, R_{1}\right\}$. Furthermore, set $\mathcal U\left(\tau_{-y}(\cdot)\right) = \mathcal{U}_{y},$ and  define:
$$
\begin{aligned}
& \Gamma:=\left\{\gamma \in \mathcal{C}\left(B(0,R_2)^\complement, \mathcal{N}\right) \text { s.t. } \exists\,  \tilde R>R^\prime>0 \mid \gamma(y)=Q\left(\mathcal{U}_{-y}\right) \forall y \text{ such that } d(y,0)> \tilde R\right. \\
&\hspace{3cm} \left.\text { and } \gamma(y)=Q\left(\mathcal{U}_{-y}\right) \, \forall\, y \text{ such that } d(y,0)< R^\prime \right\},
\end{aligned}
$$
where $Q$ is the central projection on $\mathcal{N}$, i.e., $u \mapsto t_{u}u$ where $t_{u}$ is defined as in \eqref{scaNeh}.
Then define
$$
d_{\lambda}:=\inf_{\gamma \in \Gamma}  \max_{y \in B(0,R_2)^\complement} \mathcal I_a(\gamma(y)) .
$$
Fix $R_4$ sufficiently large such that $R_4, R_{4}-R_{4}^{\frac{\alpha^{\prime}}{\alpha}}> R_2^\alpha>R_2$. Denote $R_3=R_{4}-R_{4}^{\frac{\alpha^{\prime}}{\alpha}}$.
And for this fixed $R_4$, choose 
		\begin{equation*}
			x_{2}=\left(0,0, \ldots, \tanh{\left(\frac{R_{4}-R_{4}^{\frac{\alpha^{\prime}}{\alpha}}}{2}\right)}\right).
		\end{equation*} 

%%%%%%%%%%%%%%%%%%%%%%%%%%%%%%%%%%%%%%%%%%%%%%%%%%%%%%%%%%%%%%
Using the previous Lemma, we can attain that $d_{\lambda}<2 D_{\lambda}$. Specifically, we define a path $\gamma \in \Gamma$ such that $\gamma(y):=Q\left(\mathcal{U}_{-y}\right)$ if $d(y,0)<R_3$ or $d(y,0)>R_4$. For $\gamma$ restricted to the annulus with an inner radius of $R_3$ and an outer radius of $R_4$, we parameterize the image by $Q$ of the segment joining $\mathcal U_{-x_1}$ and $\mathcal U_{-x_2}$ for $x_1 \in \partial B(0,R_4)$ and $x_2 \in \partial B(0,R_3)$, i.e., $\gamma\left(t x_1+(1-t)x_2\right)= t_{t \mathcal U_{-x_1}+(1-t)\mathcal U_{-x_2}}\left(t \mathcal U_{-x_1}+(1-t)\mathcal U_{-x_2}\right)$. Using \eqref{upbd},
$$
\max _{y \in \left(B(0,R_3)\setminus B(0,R_2)\right) \cup B(0,R_4)^\complement} \mathcal{I}_a(\gamma(y))<2 D_{\lambda} .
$$

Thus, by the choice of $x_1$ and $x_2$, and using Lemma \ref{enerEstLem}, along with the definition of $d_\lambda$, we can conclude that $d_\lambda < 2D_\lambda$.

\medskip

\begin{theorem}
    Let $N \geq 4.$ Suppose $a(x)> 1$ for all $x \in \bn$, and we are in either case (a) or (c). Then \eqref{Pa} possesses at least one positive solution.
\end{theorem}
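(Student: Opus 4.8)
The plan is to exhibit a positive critical point of $\mathcal I_a$ as a constrained min-max point on the Nehari manifold $\mathcal N$, using the min-max level $d_\lambda$ defined just above the statement. First I would dispense with the trivial sub-case: if $\inf_{\mathcal N}\mathcal I_a < \min(D_\lambda,\max(a)^{(2-N)/2}D)$, then Proposition~\ref{Pro_N_inf} already produces a positive solution, so we may assume $\inf_{\mathcal N}\mathcal I_a \geq D_\lambda$ (note $a>1$ everywhere forces $\max(a)>1$, hence in case $(a)$ we have $\min(D_\lambda,\max(a)^{(2-N)/2}D)=D_\lambda$; in case $(c)$ one argues as in Theorem~\ref{Exi-res1} that $\max(a)>1$ still holds, so the relevant threshold is again controlled and Theorem~\ref{Exi-res1} in fact already gives a solution in case $(c)$ — but I would present the unified min-max argument so the two cases are treated together). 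The key point is that the hypothesis $a(x)>1$ on all of $\bn$ guarantees the ``one-bump'' level $D_\lambda$ is not attained by the concentrating family $\mathcal U_{-y}$, and more importantly that the PS condition holds in the strip $(D_\lambda, 2D_\lambda)$: by the profile decomposition Theorem~\ref{ps_decom}, a PS sequence at level $\beta \in (D_\lambda, 2D_\lambda)$ with zero weak limit would decompose into bubbles each contributing at least $\min(D_\lambda, \min_k a(y^k)^{(2-N)/2}D)$; since $a>1$ gives $a(y^k)^{(2-N)/2}D < D$ but we must compare with $D_\lambda$, and two or more bubbles already exceed $2D_\lambda$ — so the only obstruction is a single Aubin–Talenti bubble with $a(y^k)^{(2-N)/2}D \leq D_\lambda$, which is exactly the situation ruled out in case $(a)$ and handled in case $(c)$ by the threshold $\max(a)^{(2-N)/2}D \leq D_\lambda$ combined with $a(y^k)\leq \max(a)$; hence the PS condition is recovered on the relevant energy band, possibly after shrinking the band.

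Next I would run the min-max. Using the computation preceding the statement, $d_\lambda < 2D_\lambda$; and using \eqref{upbd} together with $a>1$ (so that $\tilde{\mathcal I}_a(\mathcal U_{-y}) < D_\lambda$ for all large $d(y,0)$, strictly, and is bounded by $D_\lambda$ in the limit), one checks the linking-type lower bound $d_\lambda \geq D_\lambda$: any admissible path $\gamma \in \Gamma$ must pass through a point where the energy is at least $\inf_{\mathcal N}\mathcal I_a \geq D_\lambda$; more carefully one invokes a degree/linking argument showing the path cannot be deformed entirely below $D_\lambda$ because its behavior at $d(y,0)\to\infty$ and at $d(y,0)<R'$ is pinned to the projected bubbles $Q(\mathcal U_{-y})$ whose energies approach $D_\lambda$. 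Thus $D_\lambda \leq d_\lambda < 2D_\lambda$. Applying the Ekeland variational principle on $\mathcal N$ (which is a natural constraint, so critical points of $\mathcal I_a|_{\mathcal N}$ are critical points of $\mathcal I_a$), we obtain a PS sequence $(u_n)$ for $\mathcal I_a$ at level $d_\lambda$; we may take it nonnegative by the usual truncation, since $\mathcal I_a(|u|)\le \mathcal I_a(u)$ on the relevant set. By the PS compactness just established on $(D_\lambda, 2D_\lambda)$, $(u_n)$ converges strongly to some $u \in \mathcal N$ with $\mathcal I_a(u)=d_\lambda$, and $u$ is a critical point of $\mathcal I_a$; the strong maximum principle (applied after elliptic regularity, exactly as for the $a\equiv 1$ case) upgrades $u \geq 0, u\not\equiv 0$ to $u>0$, giving a positive solution of \eqref{Pa}.

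The main obstacle I expect is verifying that the PS condition genuinely holds at the level $d_\lambda$ — i.e., excluding the single-Aubin–Talenti-bubble degeneration at energy exactly in $(D_\lambda, 2D_\lambda)$. In case $(a)$, $2D_\lambda \leq \max(a)^{(2-N)/2}D$ together with $a(y^k)\le \max(a)$ forces a single bubble's contribution $a(y^k)^{(2-N)/2}D \geq \max(a)^{(2-N)/2}D \geq 2D_\lambda$, so a bubble simply cannot sit below $2D_\lambda$ and compactness is automatic; this is the clean case. In case $(c)$, $\max(a)^{(2-N)/2}D \leq D_\lambda$ means a bubble contributes at most $D_\lambda$... wait, this is the opposite direction, so one does \emph{not} get compactness for free and instead the argument of Theorem~\ref{Exi-res1} — constructing a test function with energy strictly below $\max(a)^{(2-N)/2}D \leq D_\lambda$ — is what applies, yielding $\inf_{\mathcal N}\mathcal I_a < D_\lambda$ and hence a solution directly from Proposition~\ref{Pro_N_inf} without needing the min-max. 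So the honest structure of the proof is: in case $(c)$ invoke Theorem~\ref{Exi-res1}; in case $(a)$, either $\inf_{\mathcal N}\mathcal I_a<D_\lambda$ (done by Proposition~\ref{Pro_N_inf}) or $\inf_{\mathcal N}\mathcal I_a\ge D_\lambda$, in which case the min-max above with the automatic compactness on $(D_\lambda,2D_\lambda)$ delivers a second-index positive solution at level $d_\lambda\in[D_\lambda,2D_\lambda)$. The delicate bookkeeping is making the linking inequality $d_\lambda \geq D_\lambda$ rigorous and checking that the admissible class $\Gamma$ is nonempty and stable under the Ekeland deformation, which is routine but must be done carefully using the pinning conditions on $\gamma$ at small and large $d(y,0)$.
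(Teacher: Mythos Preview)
Your approach is essentially the paper's: a constrained min-max on the Nehari manifold at the level $d_\lambda$, combined with the profile decomposition to recover compactness. Your decision to dispatch case~$(c)$ directly via Theorem~\ref{Exi-res1} is correct and arguably cleaner than the paper's one-line assertion that ``$d_\lambda$ falls within the safe energy range'' in both cases; and your case~$(a)$ computation that any Aubin--Talenti bubble contributes $a(y^k)^{(2-N)/2}D\ge \max(a)^{(2-N)/2}D\ge 2D_\lambda$, hence cannot appear in a decomposition at a level below $2D_\lambda$, is exactly right.

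There is, however, one genuine gap. You establish compactness only on the \emph{open} interval $(D_\lambda,2D_\lambda)$, yet you allow $d_\lambda=D_\lambda$. At that boundary level a single hyperbolic bubble $\mathcal U\circ T_n$ is a legitimate profile for a PS sequence with zero weak limit, so compactness fails and the Ekeland/Mountain-Pass step does not conclude. The paper handles this case separately: it extends $\Gamma$ to paths defined on all of $\bn$ and introduces the scale-detecting functional
\[
G(u)=\frac{\displaystyle\int_{\bn}|\nabla_{\bn} u|^2\dvg}{\displaystyle\int_{\bn}|\nabla_{\bn}\mathcal U|^2\dvg}.
\]
Because $a>1$ everywhere, the projected bubble $Q(\mathcal U_{-y})$ satisfies $G(Q(\mathcal U_{-y}))\to\bigl[\|\mathcal U\|_\lambda^2/\int_{\bn} a\,\mathcal U^{2^*}\dvg\bigr]^{2/(2^*-2)}<1$ as $d(y,0)\to 0$, while $G(Q(\mathcal U_{-y}))\to 1$ as $d(y,0)\to\infty$. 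An intermediate-value argument along each path then traps, via Ekeland, a PS sequence $(v_n)$ at level $D_\lambda$ with $G(v_n)\to c<1$; since the pure hyperbolic bubble has $G\equiv 1$, this extra constraint rules out the single-bubble degeneration and forces a nontrivial weak limit. You should either supply this auxiliary-constraint argument or, alternatively, prove that under the standing hypothesis one actually has $d_\lambda>D_\lambda$ strictly.
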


\begin{proof}We can extend the definition of $\Gamma$ to include all of $\bn$. Within the ball $B(0,R_2)$, the inequality $d_{\lambda}< 2D_{\lambda}$ can be immediately derived from $a(x)>1$. This leads to two possible scenarios:

\medskip 

Under the condition that $d_{\lambda}=D_{\lambda}$, there exists a sequence $\left(u_{n}\right) \subset \mathcal{D}_{+}^{1,2}\left(\bn\right)$ such that $\mathcal I_{a}\left(u_{n}\right) \rightarrow D_{\lambda}$ and

$$
G\left(u_{n}\right):=\int_{\bn}\left|\nabla u_{n}\right|^{2} \dvg \cdot\left(\int_{\bn}\left|\nabla \mathcal{U}\right|^{2} \dvg \right)^{-1}=c<1 .
$$
For every path $\gamma \in \Gamma,\lim _{d(y,0) \rightarrow 0} G(\gamma(y))=\[\frac{\|\mathcal{U}\|^2}{\int_{\bn}a(z)\mathcal{|U|}^{2^*}\dvg(z)}\]^{\frac{2}{2^*-2}}<1$ and $\lim _{d(y,0)+\infty} G(\gamma(y))=1$. According to Ekeland's variational principle, there is a Palais-Smale sequence $\left(v_{n}\right) \subset \mathcal{D}_{+}^{1,2}\left(\bn\right)$ such that $G\left(v_{n}\right) \rightarrow c$ and $\mathcal{I}_{a}\left(v_{n}\right) \rightarrow D_{\lambda}$. Therefore, by utilizing Theorem \ref{ps_decom}, the sequence $\left(v_{n}\right)$ possesses a subsequence that converges weakly to a nonzero solution of \eqref{Pa}.

If $d_{\lambda}>D_{\lambda}$, according to the Mountain Pass lemma, there exists a Palais-Smale sequence for $\mathcal I_{a}$ at level $d_{\lambda}$. Additionally, since we have established $d_{\lambda}<2 D_{\lambda}$, and under the conditions of case $(a)$ or $(c)$, $d_{\lambda}$ falls within the safe energy range.
\end{proof}

%%%%%%%%%%%%%%%%%%%%%%%%%%%%%%%%%%%%%%%%%%%%%%%%%%%%%%%%%%%%%%
\section{Existence of solutions: The case without Palais-Smale }
\label{WithoutPS}
The situation where the Palais-Smale criterion is not satisfied is the focus of this section. Here, we will address the remaining scenario described below:
\begin{equation}\label{infty-case}
D_\lambda<\max (a)^{(2-N) / 2} D<2 D_\lambda \quad \text { and } \quad D_\lambda<d_\lambda<2 D_\lambda .
\end{equation}

\medskip

Indeed, the same result as in the preceding section applies if $d_\lambda<\max (a)^{(2-N) / 2} D_0$.  This is why the problem of reversed inequality is interesting. It is delicate because the usual variational arguments will not be sufficient to address this scenario. We shall follow the approach of A.~Bahri's \it critical point at infinity \rm to establish solutions to the problem. The critical exponent problem with Hardy potential in the entire Euclidean space was studied in the seminal paper \cite{Sm}, which proved a solution in dimension $N=4.$ The dimension restriction is quite delicate because few estimates are limited to that dimension. We will specifically utilise Smets' concepts in our context.
Nonetheless, the problem in the hyperbolic space will be reduced to an equation in the Euclidean ball with a singularity at the ball's boundary via a conformal change of metric. Because of Theorem~\ref{ps_decom}, we will avoid the boundary when conducting blow-up analysis. Local blow-up only occurs within the ball's interior of radius $2 - \sqrt{3}$ as demonstrated in the proof of \cite[Theorem 3.1]{BGGS}. Furthermore, unlike in the entire Euclidean space, we now have to deal with the bounded domain, hence projection-type arguments must be used. This adds a great deal of complexity to the subsequent estimates.

\medskip

Let $y_0 \in B(0,1)$ be such that $d_\lambda = a\left(y_0\right)^{(2-N)/2} D$, and define $A^*:= \left\{x \in B(0,1) : a(x) = a\left(y_0\right)\right\}$. Assume that  $a(y_0) \neq 1.$ Let $A_*$ denote a compact neighbourhood of $A^*$ within $B(0,1)$. Thus, for any $y \in B(0,1)$, there exists $\bar{y} \in A_*$ such that $\operatorname{dist}\left(y, A_*\right) = |y - \bar{y}|$. Furthermore, we impose the following assumption on the potential $a(x)$:
\begin{enumerate}[label=(\textbf{A\arabic*})]
\item  There exists $\theta $ such that $4 < \theta <N$ and for any $\tilde{y} \in A_*, |a(x) - a(\tilde{y})| = o\left(|x - \tilde{y}|^\theta\right)$ as $x \rightarrow \tilde{y}$. \label{A2}
\end{enumerate}

\medskip 
This additional assumption on the potential $a(x)$ is motivated by the work of (\cite{FP, AFP}), such an assumption is sometimes called the non-degeneracy condition on $a(x).$
 We will now prove the main theorem of this section, assuming $a(y_0) \neq 1$.

\begin{theorem}\label{theorem-critical-point-infinity}
 Let $N >6.$ Assume that \eqref{a_maincond}, \ref{A2} are satisfied and the assumption \eqref{infty-case} holds. Then \eqref{Pa} has at least one positive solution $u \in H^{1}(\bn)$ for all $\frac{N(N-2)}{4} < \lambda < \frac{N(N-2)}{4} + \frac{1}{N+4}$.
 \end{theorem}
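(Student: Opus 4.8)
The plan is to work in the conformal Euclidean picture \eqref{con-ecl-eq} and implement Bahri's \emph{critical point at infinity} machinery, following the blueprint of Smets \cite{Sm} for the Hardy-potential problem but adapting it to the bounded ball (so that true Aubin--Talenti bubbles must be replaced by their $H^1_0(B(0,1))$-projections $PV^{\eps,y}$). By Theorem \ref{ps_decom} and its Corollary, the only way a Palais--Smale sequence for $\mathcal J_a$ at a level in the dangerous band $(D_\lambda, 2D_\lambda)$ can fail to produce a solution is if it is asymptotic to a single localized bubble concentrating at a point of $A^*$; so the goal is to show that this scenario can be \emph{deformed away} unless a genuine critical point exists. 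Concretely, I would parametrize a neighbourhood at infinity of the functional $\mathcal J_a$ by the set of almost-bubbles $\{\, PV^{\eps,y} : \eps \text{ small},\ y\in A_*\,\}$ and derive a refined expansion of $\mathcal J_a(PV^{\eps,y})$ and of its gradient along this manifold.

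First I would set up the projected bubbles $PV^{\eps,y}$ solving $-\Delta PV^{\eps,y} = (V^{\eps,y})^{2^*-1}$ in $B(0,1)$ with zero boundary data, and record the standard estimates $PV^{\eps,y} = V^{\eps,y} - \eps^{(N-2)/2}H(\cdot,y) + \text{l.o.t.}$, where $H$ is tied to the regular part of the Green's function of the ball; since by the discussion after \eqref{infty-case} concentration is confined to $B(0,2-\sqrt3)$, the boundary terms are exponentially harmless relative to the powers of $\eps$ that matter. Then I would expand
\begin{equation*}
\mathcal J_a(PV^{\eps,y}) = a(y)^{(2-N)/2}D - c_1\,\tilde\lambda\,\eps^2 + c_2\, \eps^{N-2} + o(|a(x)-a(y)|\text{-term}) + \dots
\end{equation*}
paying attention to the competition between the Hardy-type term $\int c(x)(PV^{\eps,y})^2 \sim \tilde\lambda\,\eps^2$ (this is exactly the term computed in the proof of Theorem \ref{Exi-res1}), the bubble-interaction/boundary term of order $\eps^{N-2}$, and the potential-oscillation term controlled by \ref{A2} as $o(\eps^\theta)$ with $4<\theta<N$. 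The role of the dimension restriction $N>6$ together with $\lambda < \frac{N(N-2)}{4} + \frac1{N+4}$ is precisely to guarantee that the negative $\eps^2$ term dominates the remainders in the regime where $a(y)$ is near its max, i.e. that $\max_{y} \mathcal J_a(PV^{\eps,y})$ stays strictly below $\min(2D_\lambda,\,\max(a)^{(2-N)/2}D)$ while the sup over the concentration parameter is attained in the interior of the $\eps$-range — this is what creates a genuine linking/min-max geometry rather than a degenerate one.

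Next I would run the deformation argument: assuming \eqref{Pa} has no positive solution, Theorem \ref{ps_decom} upgrades to a \emph{quantitative} deformation lemma on the region of the dangerous band, so the negative gradient flow of $\mathcal J_a$ moves any Palais--Smale-type configuration either below $D_\lambda$ or into the two-bubble region above $2D_\lambda$; one then shows the flow restricted to the bubble manifold $\{PV^{\eps,y}\}$ is, up to lower order, the flow of the finite-dimensional reduced functional $y \mapsto a(y)$ coupled with the scalar ODE $\dot\eps \sim +\tilde\lambda\,\eps$ (bubbles want to spread), which forces $y\to A^*$ and $\eps\to 0$, i.e. convergence to the critical point at infinity associated to $A^*$. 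The contradiction comes from a topological/degree count: the class $\Gamma$ (as in the definition of $d_\lambda$) cannot be deformed past this critical point at infinity because its ``boundary behaviour'' at $d(y,0)\to\infty$ links nontrivially with the concentration set $A^*$, so $d_\lambda$ must be a genuine critical value and the associated Palais--Smale sequence, not being of pure-bubble type by the strict energy inequalities just established, yields the desired $u\in H^1(\bn)$ by Theorem \ref{ps_decom}. Finally I would translate $v\in H^1_0(B(0,1))$ back to $u = \varphi^{-1}v \in H^1(\bn)$ and invoke elliptic regularity and the maximum principle for positivity.

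The main obstacle I expect is the gradient estimate on the bubble manifold in the bounded-domain setting: unlike Smets' whole-space analysis, the interaction between the projection correction $\eps^{(N-2)/2}H$, the singular Hardy weight $c(x)=\tilde\lambda\,(2/(1-|x|^2))^2$, and the non-degeneracy remainder from \ref{A2} must all be balanced simultaneously, and it is the narrow window $\lambda<\frac{N(N-2)}{4}+\frac1{N+4}$ in dimension $N>6$ that makes the $\eps^2$ term win — getting this balance sharp (rather than losing it to the $\eps^{N-2}$ or the $o(\eps^\theta)$ terms) is the delicate computational heart of the argument, and is presumably where the appendix estimates cited in the introduction are used.
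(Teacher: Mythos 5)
Your overall architecture --- conformal change to \eqref{con-ecl-eq}, projected bubbles $PV^{\eps,y}$, an energy expansion on the bubble manifold, a deformation avoiding the single localized bubble at $A^*$, and a final appeal to Theorem \ref{ps_decom} --- is the paper's. But the mechanism of the deformation, which is the actual content of the proof, is missing in one place and has the wrong sign in another. (i) The paper's rerouting of paths around $\mathcal{I}_\rho$ rests on Lemma \ref{delta_dep}: $\partial_\delta\tilde{\mathcal J}_a(PV^{\eps,y}+\delta v)>0$ once $\delta\geq C^{+}\eps^{1+\kappa}$, which is what lets one push the transversal component down to $\delta=0$ without raising the energy. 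This is exactly where Rey's spectral inequality enters, via the coercivity bound $1-\int_B c(x)v^2-(2^*-1)\int_B|PV^{\eps,y}|^{2^*-2}v^2\geq \bigl(\tfrac{4}{N+4}-(4\lambda-N(N-2))\bigr)\int_B|\nabla v|^2$, and it is the \emph{sole} source of the restriction $\lambda<\tfrac{N(N-2)}{4}+\tfrac{1}{N+4}$ --- not, as you claim, the competition of the $\eps^2$ term in the expansion on $\{\delta=0\}$ (that expansion only needs $\tilde\lambda>0$). You never address this transversal coercivity at all. (ii) Your reduced dynamics are internally inconsistent: you write $\dot\eps\sim+\tilde\lambda\,\eps$ ("bubbles want to spread") and in the same sentence conclude $\eps\to0$. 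The correct statement (Lemma \ref{epsiChan}) is that $\partial_\eps\tilde{\mathcal J}_a$ has negative leading term $-C(N)\tilde\lambda\, a(y)^{(2-N)/2}\eps\,(1-|y|^2)^{-2}\int_{\Rn}(|z|^2-1)(1+|z|^2)^{1-N}\,\mathrm{d}z<0$ for $N>4$, so \emph{increasing} $\eps$ is a descent direction: bubbles de-concentrate and exit $\mathcal{I}_\rho$, and nothing converges to the critical point at infinity along the flow.

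This sign error propagates into your final step, whose logic is inverted. You argue that $\Gamma$ ``cannot be deformed past'' the critical point at infinity and that this linking forces $d_\lambda$ to be a genuine critical value. The paper proves the opposite, and that is what yields the solution: Proposition \ref{main-prop} constructs, for every $\gamma\in\Gamma$, a competitor $\tilde\gamma\in\Gamma$ with $\tilde\gamma(B(0,R_2)^\complement)\cap\mathcal{I}_\rho=\emptyset$ and no larger max, by first flowing off $\{\delta>0\}$ using (i)--(ii) and then closing the path inside $\{\delta=0\}$, where Lemma \ref{energyMain} gives $\tilde{\mathcal J}_a(PV^{\eps,y})<d_\lambda$ near $A^*$. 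Hence almost-optimal paths avoid $\mathcal{I}_\rho$, the resulting PS sequences at level $d_\lambda$ cannot concentrate as a localized bubble on $A^*$, and since $D_\lambda<d_\lambda<2D_\lambda$ excludes every other profile combination in Theorem \ref{ps_decom}, the weak limit is nontrivial. Had the paths genuinely been obstructed by the critical point at infinity, the min-max level would be carried by the blow-up profile and no solution would follow; the obstruction you invoke is precisely the scenario the proof must exclude, not the engine of existence.
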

 
 %%%%%%%%%%%%%%%%

\begin{remark}
{\rm 
The dimension restriction $N > 6$ appeared in the intermediate results while proving crucial estimates. Interestingly, only in dimension $N =4$ was the existence of the solution shown in \cite{Sm}. The analysis carried out in that dimension will not be extended, specifically because of an integrability problem resulting from the Hardy term at the origin. Similarly, in our case, a singularity appears at the Euclidean ball boundary following the conformal change of metric, but our dimension restriction avoids the integrability problems.
}
\end{remark}

%%%%

\begin{remark}
{\rm The limitation on the upper bound of $\lambda$ is another shortcoming of Theorem~\ref{theorem-critical-point-infinity}. 
This limitation results from the sharp constant in the \it spectral inequality, \rm which O.~Rey \cite[Appendix~D]{ReyO} demonstrated. Although we would prefer to continue until $\frac{(N-1)^2}{4}$, we believe that this restriction on $\lambda$ is reasonable given that, in the process of proving our theorem, we will be interested in the case where $d_\lambda= a\left(y_0\right)^{(2-N) / 2} D$. As a result, we will conduct the analysis close to $d_\lambda$, or close to the localised Aubin-Talenti Bubble.  So, when $\lambda$ is close to $\frac{N(N-2)}{4},$ the energy of the associated functional will be quite close to $D$.
}
\end{remark}

To this end, we shall first recall some of the well-known results concerning projection operators, the Green's function, its regular part, and the strategy of the proof.

%%%%%%%%%%%%%%%%%%%%%%%%%%%%%%%%%%%%%%%%%%%%%%%

\medskip
\subsection{Projections and Setting up the Problem}
Let $P$ denote the projection from $H^1(B(0,1))$ onto $H_0^1(B(0,1))$, i.e.,
$u=P f$ is the solution of :
\begin{equation}
 	\tag{$\mathcal{P}$}\label{ProjOp}
  \left\{\begin{aligned}
     \Delta u=\Delta f & \text { on } B(0,1), \\ u=0 & \text { on } \partial B(0,1).
    \end{aligned}
    \right.
    \end{equation}
For $V^{\varepsilon, y}$ as defined in \eqref{Talenti} and $P$ as defined in \eqref{ProjOp}, we have the following lemma:

\begin{lemma}{\cite[Proposition 5.2]{BahriB}}
Let $N \geq 4.$ For all $\mu>0$, there exists $\nu>0$ such that if $u \in H_0^1\left(B(0,1)\right)$ satisfies $$\left\|u-cP V^{\varepsilon, y}\right\|_{H_0^1\left(B(0,1)\right)}<\nu$$ for some $0<\varepsilon<\nu, c>\mu$, and $y \in B(0,1)$, then there exist unique $y(u),\varepsilon(u), c(u), \delta(u)>0$, and $v(u) \in H_0^1\left(B(0,1)\right)$ such that
\begin{equation*}
u=c(u)P V^{\varepsilon(u), y(u)}+\delta(u) v(u)
\end{equation*}
where $v$ satisfies the orthogonality condition
  \begin{equation}
\tag(*{\textasteriskcentered}\label{OrthCondn}\left\{\begin{array}{l}
\left\langle PV^{\varepsilon(u), y(u)}, v\right\rangle=0, \\
\left\langle  \partial_{y_i} PV^{\varepsilon(u), y(u)}, v\right\rangle=\left\langle  \partial_{\varepsilon} PV^{\varepsilon(u), y(u)}, v\right\rangle=0, \quad 1 \leq i \leq N,
\end{array}\right.
\end{equation}
and $\|v\|_{H_{0}^{1}(B(0,1))}=1$.
\end{lemma}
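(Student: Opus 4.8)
The plan is to realize the claimed decomposition as the solution of a finite‑dimensional least–squares problem along the manifold of projected bubbles and then to invoke the implicit function theorem; the only genuinely nontrivial input will be the quantitative non‑degeneracy of the bubble together with its $y$– and $\varepsilon$–derivatives.

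\emph{Variational characterization.} Given $u$ with $\|u-cPV^{\varepsilon,y}\|_{H_0^1(B(0,1))}<\nu$, consider, for $(\tilde c,\tilde\varepsilon,\tilde y)$ ranging over a small neighbourhood of $(c,\varepsilon,y)$ in $\mathbb R\times(0,\infty)\times B(0,1)$, the function
\[
\rho_u(\tilde c,\tilde\varepsilon,\tilde y):=\big\|u-\tilde c\,PV^{\tilde\varepsilon,\tilde y}\big\|_{H_0^1(B(0,1))}^2 .
\]
A minimizer of $\rho_u$ is a critical point, and setting $\partial_{\tilde c}\rho_u=\partial_{\tilde\varepsilon}\rho_u=\partial_{\tilde y_i}\rho_u=0$ produces exactly
\[
\big\langle u-\tilde cPV^{\tilde\varepsilon,\tilde y},\,PV^{\tilde\varepsilon,\tilde y}\big\rangle=\big\langle u-\tilde cPV^{\tilde\varepsilon,\tilde y},\,\partial_\varepsilon PV^{\tilde\varepsilon,\tilde y}\big\rangle=\big\langle u-\tilde cPV^{\tilde\varepsilon,\tilde y},\,\partial_{y_i}PV^{\tilde\varepsilon,\tilde y}\big\rangle=0 ,
\]
which are precisely the required orthogonality conditions for $w:=u-\tilde cPV^{\tilde\varepsilon,\tilde y}$. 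So once a minimizing tuple $(c(u),\varepsilon(u),y(u))$ near $(c,\varepsilon,y)$ is shown to exist and be unique, we set $\delta(u):=\|w\|_{H_0^1}$ and $v(u):=w/\delta(u)$; if $w\equiv 0$ then $u$ already lies on the manifold of projected bubbles and the statement is trivial, so we may assume $\delta(u)>0$.

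\emph{Implicit function theorem.} Define $G:\mathbb R^{N+2}\times H_0^1(B(0,1))\to\mathbb R^{N+2}$ by letting its components be the left–hand sides of the three displayed equations above (that is, $G=\tfrac12\nabla_{(\tilde c,\tilde\varepsilon,\tilde y)}\rho_u$, now regarding $u$ as the last variable). Then $G(c,\varepsilon,y;cPV^{\varepsilon,y})=0$, so it suffices to invert $D_{(\tilde c,\tilde\varepsilon,\tilde y)}G$ at this base configuration with bounds uniform in the admissible range of parameters. Because the factor $u-\tilde cPV^{\tilde\varepsilon,\tilde y}$ vanishes at the base point, every contribution obtained by differentiating it drops out, and one computes that $D_{(\tilde c,\tilde\varepsilon,\tilde y)}G$ equals, up to right multiplication by the invertible diagonal matrix $\mathrm{diag}(1,c,c,\dots,c)$, minus the Gram matrix $\mathcal G_{\varepsilon,y}$ of the family
\[
\big\{\,PV^{\varepsilon,y},\ \partial_\varepsilon PV^{\varepsilon,y},\ \partial_{y_1}PV^{\varepsilon,y},\dots,\partial_{y_N}PV^{\varepsilon,y}\,\big\}
\]
in $H_0^1(B(0,1))$. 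Since $c>\mu>0$, invertibility of the Jacobian is equivalent to a lower bound on $\det\mathcal G_{\varepsilon,y}$.

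\emph{Non‑degeneracy.} After the rescaling $z\mapsto\varepsilon z+y$, the bubble $V^{\varepsilon,y}$ becomes the standard Talenti profile $V$ of \eqref{Talenti}, and the classical non‑degeneracy of $V$ (its linearized critical operator has kernel exactly the span of the dilation and translation derivatives) shows that $\{V^{\varepsilon,y},\partial_\varepsilon V^{\varepsilon,y},\partial_{y_i}V^{\varepsilon,y}\}$ is linearly independent in $D^{1,2}(\mathbb R^N)$ with Gram determinant bounded below by a positive constant depending only on $N$. The projection operator $P$ of \eqref{ProjOp} differs from the identity by the correction $V^{\varepsilon,y}-PV^{\varepsilon,y}$, whose $H_0^1$–norm, and those of its $\varepsilon$– and $y$–derivatives, are controlled by the regular part of the Green's function; using the standard estimates for $PV^{\varepsilon,y}$ and the fact recorded after Theorem~\ref{theorem-critical-point-infinity} that the concentration point may be taken in a compact subset of $B(0,1)$ (so that $\operatorname{dist}(y,\partial B(0,1))$ stays bounded below), these corrections are $O(\varepsilon^{(N-2)/2})$. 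Hence $\mathcal G_{\varepsilon,y}$ is a small perturbation of the (rescaled) Euclidean Gram matrix, and for $\varepsilon<\nu$ with $\nu=\nu(\mu)$ small its determinant is bounded below. The implicit function theorem then yields $C^1$ maps $u\mapsto(c(u),\varepsilon(u),y(u))$ on $\{\|u-cPV^{\varepsilon,y}\|_{H_0^1}<\nu\}$ (after shrinking $\nu$), local uniqueness being part of the statement; a routine continuity/connectedness argument upgrades this to uniqueness throughout the neighbourhood, and the construction of the first paragraph supplies $c(u),\varepsilon(u),y(u),\delta(u)>0$ and $v(u)$.

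The main obstacle is the last step: making the lower bound on $\det\mathcal G_{\varepsilon,y}$ — equivalently, the quantitative non‑degeneracy of the family of \emph{projected} bubbles — uniform over the admissible parameters, which is exactly where one needs the precise Green's‑function (regular part) estimates for $PV^{\varepsilon,y}$ and the restriction keeping the concentration point away from $\partial B(0,1)$.
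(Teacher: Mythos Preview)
The paper does not give its own proof; the lemma is cited from Bahri \cite[Proposition~5.2]{BahriB} and used as a black box. Your sketch is the standard argument (and essentially Bahri's): minimize the $H_0^1$-distance from $u$ to the manifold of scaled projected bubbles, read off the orthogonality relations as first-order conditions, and close with the implicit function theorem using non-degeneracy of the Talenti profile.

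One point to tighten: the Gram matrix $\mathcal G_{\varepsilon,y}$ of $\{PV^{\varepsilon,y},\partial_\varepsilon PV^{\varepsilon,y},\partial_{y_i}PV^{\varepsilon,y}\}$ is not close to a fixed matrix as $\varepsilon\to 0$, since $\|\partial_\varepsilon PV^{\varepsilon,y}\|_{H_0^1}^2$ and $\|\partial_{y_i}PV^{\varepsilon,y}\|_{H_0^1}^2$ scale like $C/\varepsilon^2$ (see Estimate~\ref{Proj&ProjDer}); its determinant is therefore of order $\varepsilon^{-2(N+1)}$, so ``bounded below by a constant depending only on $N$'' is true but is not the estimate that makes the implicit function theorem deliver a neighbourhood of size $\nu$ uniform in $\varepsilon$. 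The fix---which you gesture at with the word ``(rescaled)''---is to work with the normalized family $\{PV^{\varepsilon,y},\,\varepsilon\partial_\varepsilon PV^{\varepsilon,y},\,\varepsilon\partial_{y_i}PV^{\varepsilon,y}\}$, whose Gram matrix equals a fixed positive diagonal matrix plus $O\big((\varepsilon/d)^{N-2}\big)$, and to apply the implicit function theorem in those coordinates (equivalently, use parameters $(c,\log\varepsilon,y)$). Your remark that the argument needs $d=\operatorname{dist}(y,\partial B(0,1))$ bounded away from zero is correct and is harmless here, because in the paper's application $y$ lies near the compact set $A^*\subset B(0,1)$.
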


\begin{remark}
    {\rm It is evident that any $w \in H_0^1(B(0,1))$ naturally extends by zero outside $B(0,1)$. We will refer to this extension of $w$ by $\tilde w$. Additionally, $\tilde w  \in H^1(\Rn)$.}
\end{remark}
\noindent
Furthermore, employing integration by parts, the definition of projection, and \eqref{OrthCondn}, we can infer
\begin{equation}
\tag*{(\textasteriskcentered\textasteriskcentered)}\label{OrthCondn2}\left\{\begin{array}{l}
\left\langle V^{\varepsilon, y}, \tilde{v}\right\rangle_{H^1(\Rn)}=0, \\
\left\langle  \partial_{y_i} V^{\varepsilon, y}, \tilde{v}\right\rangle_{H^1(\Rn)}=\left\langle  \partial_{\varepsilon} V^{\varepsilon, y}, \tilde{v}\right\rangle_{H^1(\Rn)}=0, \quad 1 \leq i \leq N.
\end{array}\right.
\end{equation}
Also, it is important to observe that by utilizing $\emph{(1)}$ of Estimate \ref{ErrorPro}, we obtain $P  V^{\varepsilon, y} \leq  V^{\varepsilon, y}.$
\begin{remark}
{\rm
The functions $c(.), \varepsilon(.), y(.), \delta(.)$ and $v(.)$ are smooth.  We will neglect to indicate the dependency of these coefficients on $u$ from now on,  whenever it won't confuse. Also, we shall denote by $d=\operatorname{dist}(y, \partial B(0,1)).$}
\end{remark}
\subsection{Green's Function} Consider $\varphi: B(0,1) \rightarrow \mathbb{R}$ defined by
\begin{equation*}
\varphi(x)=H(x, x),
\end{equation*}
where $H(x, y)=\frac{1}{|x-y|^{N-2}}-G(x, y)$ on $B(0,1) \times B(0,1)$ with $G$ satisfying
\begin{equation*}
\forall x \in B(0,1) \begin{cases}-\Delta G(x, \cdot)=\delta_x & \text { on } B(0,1), \\ G(x, \cdot)=0 & \text { on } \partial B(0,1) .\end{cases}
\end{equation*}
Here, $\delta_x$ denotes the Dirac mass at $x$. Consequently, $H$ represents the regular part of the Green's function and satisfies
\begin{equation}
\left\{\begin{array}{l}
\Delta_y H(x, y)=0 \, \text { in } B(0,1), \\
H(x, y)=\frac{1}{|x-y|^{N-2}} \,\text { on } \partial B(0,1).
\end{array}\right. \label{RegGr}
\end{equation}
\subsection{Critical Point at Infinity and the Strategy of the Proof}

\medskip 

\begin{comment}
\begin{remark}
{\rm 
The hypothesis $\mbox{max(a(x))} > 1$ in $\bn$ has been made to prevent the boundary case in $A^*$.}
\end{remark}
\end{comment}

\medskip
\noindent
For a given $\rho>0$, we define the $\rho$-neighborhood of the critical set at infinity as
\begin{equation*}
\mathcal{I}_\rho:=\left\{c PV^{\varepsilon, y}+\delta v \text { s.t. }\left|c-a\left(y_0\right)^{\frac{2-N}{4}}\right|<\rho,\;\varepsilon<\rho,\;\delta<\rho, \text { and } \operatorname{dist}\left(y, A^*\right)<\rho\right\},
\end{equation*}
where $\|v\|_{H_{0}^{1}(B(0,1))}=1$ and $v$ satisfies the orthogonality condition \eqref{OrthCondn}.

Our goal is to prove the existence of $\rho>0$ and a sequence $\left(\gamma_n\right)_n \subset \Gamma$ such that  
\begin{equation*}
\max _{y \in B(0,R_2)^\complement} \mathcal J_{a}\left(\gamma_n(y)\right) \rightarrow d_\lambda \quad \text { and } \quad \gamma_n\left(B(0,R_2)^\complement\right) \cap \mathcal{I}_\rho=\emptyset \quad \forall n \in \mathbb{N} .
\end{equation*}

The above goal is demonstrated in Proposition~\ref{main-prop}. After proving this deformation argument and further utilizing the Mountain Pass Lemma, Theorem~\ref{ps_decom}, and Lemma \ref{ProjCutoff}, proving Theorem~\ref{theorem-critical-point-infinity} will be straightforward.

The proof of Proposition~\ref{main-prop} proceeds as follows:  We will use a path $\gamma \in \Gamma$ and two consecutive intersection times of $\gamma$ with $\mathcal{I}_\rho$, say $t_{\mathcal{U}_{{-y}_{1}}}$ and $t_{\mathcal{U}_{{-y}_{2}}}$, where $t_{u}$ for $0 \neq u \in H^1(\bn)$ is as defined in \eqref{scaNeh}. We then create a deformation $\tilde{\gamma}$ of $\gamma$ inside $\left[t_{\mathcal{U}_{{-y}_{1}}}, t_{\mathcal{U}_{{-y}_{2}}}\right]$ such that it no longer enters into $\mathcal{I}_\rho$ without raising the energy level. This proof method is inspired by \cite{BE}. The first step in doing this is to, without raising the energy level, connect $\gamma\left(y_1\right)$ and $\gamma\left(y_2\right)$, respectively, to functions for which $\delta=0$ but $\varepsilon>\rho$. After that, the path is closed while travelling along $\{\delta=0\}$, a finite-dimensional manifold.

%%%%%%%%%

\medskip

To achieve our goal, we need to prove the subsequent lemmas. 

\begin{lemma}\label{ProjCutoff}
Let $N \geq 4$ and $P$ denoting the projection operator as defined in \eqref{ProjOp}, the following holds
$$
 \|PV^{\varepsilon,y}-\phi V^{\varepsilon,y}\| \rightarrow 0 \quad \mbox{as} \ \varepsilon \rightarrow 0,
$$
where  $\phi$ is defined in \eqref{seq-vn} and $ \|.\| := \|.\|_{H^1_0\left(B(0,1)\right)}.$ In particular, $PV^{\varepsilon,y}$ and $\phi V^{\varepsilon,y}$ are close in $H_0^1(B(0,1))$ norm for $\varepsilon$ small.

\end{lemma}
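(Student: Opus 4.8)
The plan is to estimate the $H^1_0$-distance between the projection $PV^{\varepsilon,y}$ and the cutoff bubble $\phi V^{\varepsilon,y}$ by exploiting that both coincide with $V^{\varepsilon,y}$ on the bulk of $B(0,1)$ and differ only near the boundary $\partial B(0,1)$, where $V^{\varepsilon,y}$ is already very small (of order $\varepsilon^{(N-2)/2}$) because of the smallness of $\varepsilon$ and the fact that $\operatorname{dist}(y,\partial B(0,1))$ stays bounded below. The natural route is to write $PV^{\varepsilon,y} - \phi V^{\varepsilon,y} = (PV^{\varepsilon,y} - V^{\varepsilon,y}) + (V^{\varepsilon,y} - \phi V^{\varepsilon,y})$ and bound each piece separately in the $H^1_0$ norm.

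For the second piece, $V^{\varepsilon,y} - \phi V^{\varepsilon,y} = (1-\phi)V^{\varepsilon,y}$ is supported in $\{|x-y|>r\}$; a direct computation of $\int |\nabla((1-\phi)V^{\varepsilon,y})|^2\,dx$ using $|\nabla V^{\varepsilon,y}(x)| \lesssim \varepsilon^{(N-2)/2}|x-y|^{1-N}$ and $|V^{\varepsilon,y}(x)| \lesssim \varepsilon^{(N-2)/2}|x-y|^{2-N}$ on that region shows this quantity is $O(\varepsilon^{N-2}) \to 0$. For the first piece, one uses the standard estimate on the error term of the projection, namely $PV^{\varepsilon,y} = V^{\varepsilon,y} - \varepsilon^{(N-2)/2}H_\varepsilon$ where $H_\varepsilon$ is a harmonic-type correction comparable to $[N(N-2)]^{(N-2)/4}\varepsilon^{(N-2)/2}H(x,y)$ — precisely the content invoked from Estimate \ref{ErrorPro} (item (1)) and the classical results of Bahri/Rey on projections of bubbles in bounded domains. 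Since $d = \operatorname{dist}(y,\partial B(0,1))$ is bounded away from zero along the relevant configurations, $\|PV^{\varepsilon,y}-V^{\varepsilon,y}\|_{H^1_0} = O(\varepsilon^{(N-2)/2})$, which again tends to $0$.

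Combining the two bounds by the triangle inequality gives $\|PV^{\varepsilon,y}-\phi V^{\varepsilon,y}\|_{H^1_0(B(0,1))} = O(\varepsilon^{(N-2)/2}) \to 0$ as $\varepsilon \to 0$, which is the claim. I would also remark that $\phi V^{\varepsilon,y}$ itself (extended by zero) lies in $H^1_0(B(0,1))$ since $\phi$ is compactly supported in $B(y,R) \subseteq B(0,1)$, so the comparison is well-posed.

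The main obstacle, modest as it is here, is bookkeeping the boundary behaviour uniformly: one must make sure the estimates on $PV^{\varepsilon,y}-V^{\varepsilon,y}$ hold with constants independent of $y$ as long as $y$ ranges in a fixed compact subset of $B(0,1)$ (which is guaranteed by the localization of blow-up inside the ball of radius $2-\sqrt3$ noted after \eqref{infty-case}), and to confirm that the harmonic correction's $H^1$-norm is genuinely controlled by $\varepsilon^{(N-2)/2}$ times a quantity bounded in terms of $\sup_{x}\varphi(x)$ over that compact set. Once uniformity is in hand, the convergence is immediate and no dimension restriction beyond $N\geq 4$ is needed.
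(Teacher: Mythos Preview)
Your proposal is correct and follows essentially the same route as the paper's proof: the paper also splits $PV^{\varepsilon,y}-\phi V^{\varepsilon,y}$ via the triangle inequality into $\|PV^{\varepsilon,y}-V^{\varepsilon,y}\|$ and $\|(\phi-1)V^{\varepsilon,y}\|$, bounds the first piece by $O((\varepsilon/d)^{(N-2)/2})$ using the harmonicity of the correction $\varphi^{\varepsilon,y}$ (Estimate~\ref{proGrad} and \eqref{AubGrad}), and bounds the second piece by a direct computation on the annulus where $1-\phi$ is supported. The only cosmetic difference is that the paper carries out the $I_1$ estimate by an explicit integration-by-parts identity rather than quoting the Bahri--Rey expansion directly, and obtains a slightly different (but equivalent for the conclusion) power of $\varepsilon$ for the cutoff piece.
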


\begin{proof}
We write the difference in $H^1_0(B(0, 1))$ as follows 
\begin{align*}
    \|PV^{\varepsilon,y}-\phi V^{\varepsilon,y}\|=\|PV^{\varepsilon,y}- V^{\varepsilon,y}-(\phi-1)V^{\varepsilon,y}\|\leq \underbrace{\|PV^{\varepsilon,y}- V^{\varepsilon,y}\|}_{I_1}+\underbrace{\|(\phi-1)V^{\varepsilon,y}\|}_{I_2}.
\end{align*}

Let us denote $B(0,1)$ by $B$, and we can estimate $I_1$ as follows 

\begin{align}
    I_1^2 &=\int_{B} |\nabla(PV^{\varepsilon,y}- V^{\varepsilon,y})|^2 \;{\rm d}x = \int_{B}\left|\nabla P V^{\varepsilon,y}\right|^2\;{\rm d}x+\int_{B}\left|\nabla V^{\varepsilon,y}\right|^2\;{\rm d}x - 2\int_{B}\nabla P V^{\varepsilon,y}. \nabla V^{\varepsilon,y}\;{\rm d}x \notag\\
    &=\int_{B}\left|\nabla P V^{\varepsilon,y}\right|^2\;{\rm d}x+ \int_{B}\left|\nabla V^{\varepsilon,y}\right|^2 \;{\rm d}x- 2\int_{B}\nabla P V^{\varepsilon,y}. \left(\nabla \varphi^{\varepsilon,y}+\nabla  P V^{\varepsilon,y} \right)\;{\rm d}x\notag\\ 
    & = - \int_{B}\left|\nabla P V^{\varepsilon,y}\right|^2\;{\rm d}x+ \int_{B}\left|\nabla V^{\varepsilon,y}\right|^2\;{\rm d}x- 2\int_{B}\nabla P V^{\varepsilon,y}. \nabla \varphi^{\varepsilon,y}\;{\rm d}x\notag\\
    &=- 2\int_{B}\nabla P V^{\varepsilon,y}. \nabla \varphi^{\varepsilon,y}\;{\rm d}x+ O\left(\left(\frac{\varepsilon}{d}\right)^{N-2}\right) = O\left(\left(\frac{\varepsilon}{d}\right)^{N-2}\right) \notag \text{ (using Estimate \ref{proGrad} and \eqref{AubGrad}}).
\end{align}
In the last step, we used integration by parts in the term $\int_{B(0,1)}\nabla P V^{\varepsilon,y}. \nabla \varphi^{\varepsilon,y}$, considering that $P V^{\varepsilon,y}=0 $ on $\partial B(0,1)$ and $\varphi^{\varepsilon,y}$ is harmonic. 

Next, we turn to estimate $I_2.$  From \eqref{seq-vn}, it follows that $\phi \equiv 1$ in $B(y, r-(2- \sqrt{3}))$ 
with $r > 2- \sqrt{3}.$ Also, it follows that $B(0,1) \subset B(y, R+1)$ for $y \in B(0, R).$ Indeed, for $x \in B(0,1),$ we have $|x-y|< |x|+|y|<1+R$. Moreover, it is easy to see that $\phi \equiv 0$ in $B(y, R+1)^c$. Indeed, for $z\in B(y, R+1)^c$, it holds that $|z| \geq |z-y|-|y| > R+1-1=R.$\\
By renaming $R+1$ as $\tilde{R}$ and $r-(2- \sqrt{3})$ as $\tilde{r}$, and denoting $ B(y,\tilde{r})$ by $B_{\tilde{r}}$, $ B(y,\tilde{R})$ by $B_{\tilde{R}}$, we obtain
\begin{align*}
    \|(\phi-1)V^{\varepsilon,y}\|^2\notag &= \int_{B}\left|\nabla\left[(\phi-1)V^{\varepsilon,y}\right]\right|^2 \mathrm{~d}x\\
    &=\int_{B \setminus B_{\tilde{r}}} \left|\nabla (\phi-1)\right|^2 (V^{\varepsilon, y})^2\mathrm{~d}x+ \int_{B \setminus B_{\tilde{r}}} \left|\nabla V^{\varepsilon, y}\right|^2 (\phi-1)^{2}\mathrm{~d}x\\
    &\qquad+
    2\int_{B \setminus B_{\tilde{r}}} (\phi-1)V^{\varepsilon, y}\nabla (\phi-1) . \nabla V^{\varepsilon, y}\mathrm{~d}x\notag\\
    &\leq c \left[\int_{B_{\tilde{R}} \setminus B_{\tilde{r}}} \left|\nabla (\phi-1)\right|^2 (V^{\varepsilon, y})^2\mathrm{~d}x+ \int_{B \setminus B_{\tilde{r}}}\left|\nabla V^{\varepsilon, y}\right|^2 (\phi-1)^{2}\mathrm{~d}x \right.\notag\\
    &\qquad+\left.\int_{B_{\tilde{R}} \setminus B_{\tilde{r}}} (\phi-1)V^{\varepsilon, y}\nabla (\phi-1) . \nabla V^{\varepsilon, y}\mathrm{~d}x\right]\notag\\
     &\leq c(N) \left[\int_{B_{\tilde{R}}\setminus B_{\tilde{r}}}\frac{\varepsilon^{N-2}}{\left[\varepsilon^2+|x-y|^2\right]^{N-2}}\mathrm{~d}x + \int_{B \setminus B_{\tilde{r}}}\frac{\varepsilon^{N-2}|x-y|^2}{\left[\varepsilon^2+|x-y|^2 \right]^N}\mathrm{~d}x \right.\notag\\
      & \hspace{1.5cm} + \left. \left(\int_{B_{\tilde{R}} \setminus B_{\tilde{r}}} \left|V^{\varepsilon, y}\nabla(\phi-1)\right|^{2}\mathrm{~d}x\right)^{1/2}  \left(\int_{B_{\tilde{R}} \setminus B_{\tilde{r}}} \left|\nabla V^{\varepsilon, y}(\phi-1)\right|^{2} \mathrm{~d}x\right)^{1/2} \right]\notag\\
       &\leq c(N)\left[ \varepsilon^2 \int_{\Rn}V^2 \mathrm{~d}z + \int_{\frac{\tilde{r}}{\varepsilon}}^{\infty}\frac{|z|^2}{\left[1+|z|^2 \right]^N} \mathrm{~d}z \right.\notag\\
         &\qquad+\left.\left(\int_{B_{\tilde{R}} \setminus B_{\tilde{r}}} \left|V^{\varepsilon, y}\right|^{2} \mathrm{~d}x\right)^{1/2}  \left(\int_{B_{\tilde{R}} \setminus B_{\tilde{r}}} \left|\nabla V^{\varepsilon, y}\right|^{2} \mathrm{~d}x\right)^{1/2} \right]\notag\\
       &\leq c(N)\left[\varepsilon^2+\varepsilon^{N-2}+\varepsilon^{\frac{N}{2}}\right]= O(\varepsilon^2), \label{eqn1c}
\end{align*}
for $N \geq 4.$ Therefore, 
$PV^{\varepsilon,y}-\phi V^{\varepsilon,y} \rightarrow 0$ as $\varepsilon \rightarrow 0$ in $H_{0}^1(B(0,1))$.
\end{proof}

In the subsequent computations, we shall perform the estimations using the energy functional after the conformal change. Specifically, we will work with the functional $\tilde{\mathcal J}_a$ associated with the Aubin-Talenti bubbles. Thus, for some estimations, we may set the multiplicative factor $c$ to be $1$.\\
In the subsequent computations, we refer to $B(0,1)$ as $B$. The initial lemma provides an energy control for $\tilde{\gamma}$ on $\{\delta=0\}$.
\begin{lemma} \label{energyMain}
For $N \geq 4$, there exists $0 < \varepsilon_1^{+} < \nu$ such that for all $\varepsilon < \varepsilon_1^{+}$, there exists $\tau > 0$ such that if  $\operatorname{dist}(y, A^*) < \tau$, then $\tilde{\mathcal{J}}_a(P V^{\varepsilon, y}) < d_\lambda = a(y_0)^{(2-N)/2} D$.
\end{lemma}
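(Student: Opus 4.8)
The plan is to expand $\tilde{\mathcal J}_a(PV^{\varepsilon,y})$ term by term using the energy identity on the full space, the smallness of $PV^{\varepsilon,y}-\phi V^{\varepsilon,y}$ from Lemma~\ref{ProjCutoff}, and the standard asymptotic estimates for Aubin--Talenti bubbles collected in the appendix (Estimate~\ref{ErrorPro}, Estimate~\ref{proGrad}, \eqref{AubGrad}). Since $\tilde{\mathcal J}_a$ is homogeneous of degree zero on $\{\delta=0\}$ we may normalise and compute the three integrals $\int_B |\nabla PV^{\varepsilon,y}|^2$, $\int_B c(x)(PV^{\varepsilon,y})^2$, and $\int_B a(x)(PV^{\varepsilon,y})^{2^*}$ separately, then combine. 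Using $PV^{\varepsilon,y}\le V^{\varepsilon,y}$ and the projection estimates, the gradient term is $ND + O((\varepsilon/d)^{N-2})$; because $y$ ranges over a neighbourhood of $A^*$ which is a compact subset of $B(0,1)$, the distance $d=\operatorname{dist}(y,\partial B(0,1))$ is bounded below, so this error is genuinely $O(\varepsilon^{N-2})$.

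For the singular term, I would exploit that $c(x)=\tilde\lambda\left(\tfrac{2}{1-|x|^2}\right)^2$ is bounded above and below by positive constants on any compact subset of $B(0,1)$ containing the support region of $\phi(\cdot-y)$; hence $\int_B c(x)(PV^{\varepsilon,y})^2 \asymp \int V^2$, which for $N\ge 5$ is of order $\varepsilon^2$ and is bounded below by $c\,\tilde\lambda\,\varepsilon^2$ (with a $|\ln\varepsilon|$ correction if $N=4$, but here $N\ge 4$ is the only constraint of this lemma so the $N=4$ case is handled separately exactly as in Theorem~\ref{Exi-res1}). The point is that this term is strictly positive and is \emph{subtracted} in $\tilde{\mathcal J}_a$. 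For the nonlinear term, I would write $a(x)=a(y)+[a(x)-a(y)]$ and use $\operatorname{dist}(y,A^*)<\tau$ together with continuity of $a$ to get $a(y)=a(y_0)+o_\tau(1)$; the leading contribution is $a(y_0)\,ND + o_\tau(1)\,ND + O(\varepsilon^N)$ after using $PV^{\varepsilon,y}=\phi V^{\varepsilon,y}+o(1)$ and the bubble normalisation $\int |V|^{2^*}=\int|\nabla V|^2 = S^{N/2}=ND$.

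Combining, for $N\ge 5$,
\[
\tilde{\mathcal J}_a(PV^{\varepsilon,y}) \le \frac{1}{N}\,\frac{\bigl[ND + O(\varepsilon^{N-2}) - c\tilde\lambda\,\varepsilon^2\bigr]^{N/2}}{\bigl[a(y_0)\,ND + o_\tau(1)\,ND + O(\varepsilon^2)\bigr]^{(N-2)/2}}
= a(y_0)^{(2-N)/2} D + o_\tau(1) + O(\varepsilon^{N-2}) - c'\tilde\lambda\,\varepsilon^2.
\]
Since $N\ge 5$ gives $\varepsilon^{N-2}=o(\varepsilon^2)$, the dominant correction $-c'\tilde\lambda\,\varepsilon^2$ is negative, so first fixing $\varepsilon<\varepsilon_1^+$ small enough that $O(\varepsilon^{N-2})-c'\tilde\lambda\varepsilon^2 < -\tfrac{1}{2}c'\tilde\lambda\varepsilon^2$, and then choosing $\tau>0$ small enough that the $o_\tau(1)$ term is smaller than $\tfrac14 c'\tilde\lambda\varepsilon^2$, we get $\tilde{\mathcal J}_a(PV^{\varepsilon,y}) < a(y_0)^{(2-N)/2}D = d_\lambda$, as required. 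The main obstacle, and the only genuinely delicate point, is controlling the singular term $\int_B c(x)(PV^{\varepsilon,y})^2$ with the correct sign and the correct power of $\varepsilon$ uniformly in $y$ near $A^*$: one must make sure $\phi V^{\varepsilon,y}$ (hence $PV^{\varepsilon,y}$ up to $H^1_0$-small error) stays supported away from $\partial B(0,1)$ so that $c(x)$ does not blow up, and that the lower bound $c(x)\ge c\tilde\lambda$ on that support is used to keep the $\varepsilon^2$ gain — this is precisely where the order-zero homogeneity and Lemma~\ref{ProjCutoff} do the work, and where the dimension enters through $\varepsilon^{N-2}\ll\varepsilon^2$.
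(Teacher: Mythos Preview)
Your overall architecture matches the paper's: expand the three integrals in $\tilde{\mathcal J}_a(PV^{\varepsilon,y})$, extract a negative $-c'\tilde\lambda\,\varepsilon^2$ from the $c(x)$-term, and absorb the mismatch $a(y)-a(y_0)$ into an $o_\tau(1)$ chosen after $\varepsilon$. However, there is a real gap in your treatment of the nonlinear term, and it is exactly the point the paper singles out.

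You write $a(x)=a(y)+[a(x)-a(y)]$ and then jump to ``leading contribution $a(y_0)\,ND+o_\tau(1)\,ND+O(\varepsilon^N)$''. The piece $\int_B[a(x)-a(y)](PV^{\varepsilon,y})^{2^*}\,dx$ is never estimated. From $a\in C^2$ alone this is only $O(\varepsilon)$, because the first-order Taylor term contributes $\varepsilon\,\nabla a(y)\cdot\int z\,V(z)^{2^*}\,dz$ in scale, and $y$ need not be a critical point of $a$ (the set $A^*$ is a level set, not a set of maxima). An $O(\varepsilon)$ error in the denominator propagates to an $O(\varepsilon)$ error in $\tilde{\mathcal J}_a$, which \emph{dominates} the gain $-c'\tilde\lambda\,\varepsilon^2$ and kills the conclusion. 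The paper fixes this by observing that the linear term integrates to zero against an even-in-$(x-y)$ weight: it introduces an even cutoff $\psi$ around $y$ so that $\int \langle\nabla a(y),x-y\rangle(\psi V^{\varepsilon,y})^{2^*}\,dx=0$, and only then the remainder is $O(\varepsilon^2)$. Equivalently one may use that $V^{\varepsilon,y}$ is radial about $y$ together with $PV^{\varepsilon,y}=V^{\varepsilon,y}-\varphi^{\varepsilon,y}$ and the $L^\infty$ bound on $\varphi^{\varepsilon,y}$; either way, the cancellation of the first-order term is the crux, and it is absent from your sketch.

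A secondary point: routing through Lemma~\ref{ProjCutoff} via $PV^{\varepsilon,y}=\phi V^{\varepsilon,y}+o(1)$ is too blunt here. That lemma gives only $\|PV^{\varepsilon,y}-\phi V^{\varepsilon,y}\|_{H_0^1}=O(\varepsilon)$, hence $\int a(x)|PV^{\varepsilon,y}|^{2^*}-\int a(x)|\phi V^{\varepsilon,y}|^{2^*}=O(\varepsilon)$, again swamping $\varepsilon^2$. The paper avoids this by expanding $(PV^{\varepsilon,y})^{2^*}=(V^{\varepsilon,y})^{2^*}+O\big((V^{\varepsilon,y})^{2^*-1}\varphi^{\varepsilon,y}\big)$ and using the pointwise bound $\|\varphi^{\varepsilon,y}\|_\infty=O(\varepsilon^{(N-2)/2}/d^{N-2})$ from Estimate~\ref{ErrorPro}, which yields the much sharper $O((\varepsilon/d)^{N-2})$. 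In short, the ``delicate point'' is not the singular term (that part of your argument is fine) but getting the nonlinear term down to $O(\varepsilon^2)$ via the first-order cancellation and the sharp projection estimate.
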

\begin{proof}
\begin{equation*}
\tilde{\mathcal J}_a(P  V^{\varepsilon, y})=\frac{1}{N}\left(\int_{B}\left[|\nabla P  V^{\varepsilon, y}|^2- c(x) (P  V^{\varepsilon, y})^2\right]\mathrm{~d}x\right)^{\frac{N}{2}}\left(\int_{B} a(x) |P  V^{\varepsilon, y}|^{2^*}\mathrm{~d}x \right)^{\frac{(2-N)}{2}}.
\end{equation*}   
Let's proceed to estimate each term in the equation above. First, we observe that, applying the Estimate \ref{proGrad}, we obtain,
\begin{equation*}
\begin{aligned}
\int_{B}\left|\nabla P V^{\varepsilon,y}\right|^2\mathrm{~d}x = &\int_{\Rn}V^{2^*} \mathrm{~d}x \, + \, O\left(\left(\frac{\varepsilon}{d}\right)^{N-2}\right) = ND+O\left(\left(\frac{\varepsilon}{d}\right)^{N-2}\right).
\end{aligned}
\end{equation*}   
Now let $\tilde \la :=\la-\frac{N(N-2)}{4}$. By employing Estimate \ref{ConfTerms}, we obtain
\begin{align*}
    \int_{B} c(x) (P  V^{\varepsilon, y})^2 \mathrm{~d}x =\frac{4\tilde \la C(N)\varepsilon^2 }{\left(1-\left|y\right|^{2}\right)^{2}} \int_{\mathbb{R}^{N}} \frac{1}{\left(1+|z|^{2}\right)^{N-2}}\mathrm{~d}z +O(\varepsilon^{N-2})+\begin{cases} O(\varepsilon^3) \text{ if } N >5\\
O(\varepsilon^3 |\log \varepsilon|) \text{ if } N =5
\end{cases}.
     \end{align*}
     Finally, using Estimate \ref{proGrad}, we have
     \begin{align*}
         \int_{B} a(x) |P  V^{\varepsilon, y}|^{2^*} \mathrm{~d}x 
         &= \int_{B} a(y) |P  V^{\varepsilon, y}|^{2^*}\mathrm{~d}x + \int_{B} (a(x)-a(y) |P  V^{\varepsilon, y}|^{2^*}\mathrm{~d}x \\
         & = a(y)\int_{B} (V^{\varepsilon, y})^{2^*} \mathrm{~d}x + O\left(\int_{B} (V^{\varepsilon, y})^{2^*-1}\varphi^{\varepsilon,y} \mathrm{~d}x \right)+\int_{B} (a(x)-a(y) |V^{\varepsilon, y}|^{2^*}\mathrm{~d}x\\
         &= a(y)\int_{\Rn} |V|^{2^*} \mathrm{~d}x+O\left(\left(\frac{\varepsilon}{d}\right)^{N}\right)+O\left(\left(\frac{\varepsilon}{d}\right)^{N-2}\right)+ \int_{B} (a(x)-a(y)) |V^{\varepsilon, y}|^{2^*}\mathrm{~d}x\\
         &= a(y)N D + O\left(\left(\frac{\varepsilon}{d}\right)^{N-2}\right)+\int_{B} (a(x)-a(y)) |V^{\varepsilon, y}|^{2^*} \,\mathrm{~d} x.
         \end{align*}
Now, we can estimate the integral in the above expression similar to in Theorem \ref{Exi-res1}. However, in this case, the point $y$ may not necessarily be the maximum point of the function $a$. To perform the required estimation, consider the following function where $\psi \in C_c^{\infty}(B)$ serves as a cut-off function around $y$ and is even with respect to $y.$ Readily, we have 
  \begin{align*}
             \int_{B}\langle \nabla a(y),z-y\rangle (\psi(z) V^{\varepsilon,y}(z))^{2^{*}} \mathrm{~d}z = 0,
  \end{align*}
and using estimates similar to those in Theorem \ref{Exi-res1} will lead us to
\begin{align*}
    \int_{B} (a(x)-a(y)) |V^{\varepsilon, y}|^{2^*} \mathrm{~d}x &= O(\varepsilon^2).
\end{align*}
Therefore, we can conclude the lemma similarly to Theorem \ref{Exi-res1} and using the continuity of $a$.
\end{proof}
\begin{lemma} \label{delta_dep}
For $N > 6$ and $\lambda < \frac{N(N-2)}{4} \, + \, \frac{1}{N + 4},$  there exist constants $C^{+}>0, 0<\varepsilon_2^{+}<\varepsilon_1^{+}$, and $0<\delta_2^{+}<\nu$ such that if $\delta \geq C^{+} \varepsilon^{1+\kappa}$, $y-\bar{y}=\varepsilon \zeta$ for some $\zeta \in B(0,1)$, then
\begin{equation*}
\partial_\delta \tilde{\mathcal J}_a\left(P  V^{\varepsilon, y}+\delta v\right)>0
\end{equation*}
holds for all $\varepsilon<\varepsilon_2^{+}, \delta<\delta_2^{+}$, $0<\kappa<1$ and $v$ satisfying \eqref{OrthCondn} with $\|v\|_{H_{0}^{1}(B(0,1))}=1$.
\end{lemma}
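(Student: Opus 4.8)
The plan is to compute $\partial_\delta \tilde{\mathcal J}_a(PV^{\varepsilon,y}+\delta v)$ directly from the homogeneous-of-order-zero expression for $\tilde{\mathcal J}_a$ and show that, under the stated hypotheses, the derivative is strictly positive. Writing $w_\delta := PV^{\varepsilon,y}+\delta v$, $Q(w):=\int_B\big[|\nabla w|^2 - c(x)w^2\big]\,\mathrm dx$ and $A(w):=\int_B a(x)|w|^{2^*}\,\mathrm dx$, we have $N\tilde{\mathcal J}_a(w)=Q(w)^{N/2}A(w)^{(2-N)/2}$, so
\begin{align*}
N\,\partial_\delta \tilde{\mathcal J}_a(w_\delta) = Q(w_\delta)^{\frac{N}{2}-1}A(w_\delta)^{\frac{2-N}{2}-1}\left[\tfrac{N}{2}\,\partial_\delta Q(w_\delta)\,A(w_\delta) - \tfrac{N-2}{2}\,Q(w_\delta)\,\partial_\delta A(w_\delta)\right],
\end{align*}
and since the prefactor is positive it suffices to prove the bracket is positive. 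First I would expand $\partial_\delta Q(w_\delta) = 2\langle v,\nabla w_\delta\rangle_{L^2} - 2\int_B c(x)vw_\delta\,\mathrm dx$ at $\delta$-level: using $\|v\|_{H_0^1}=1$, the orthogonality \eqref{OrthCondn}, and that $PV^{\varepsilon,y}$ solves $-\Delta PV^{\varepsilon,y}=V^{\varepsilon,y\,2^*-1}$ (so $\langle v, \nabla PV^{\varepsilon,y}\rangle = \int_B (V^{\varepsilon,y})^{2^*-1}v$, which is controlled through Hölder and the concentration of $V^{\varepsilon,y}$), I expect $\partial_\delta Q(w_\delta) = 2\delta - 2\int_B c(x)v\,PV^{\varepsilon,y}\,\mathrm dx + O(\delta)\cdot(\text{small})$. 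The term $\int_B c(x)\,v\,PV^{\varepsilon,y}$ is the delicate one: $c(x)=\tilde\lambda\cdot 4/(1-|x|^2)^2$ is bounded on compact subsets but its size enters the comparison, and this is exactly where the constraint $\tilde\lambda<\frac{1}{N+4}$ and the spectral inequality of Rey will be invoked to absorb it into the positive $2\delta$ term.

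Next I would expand $\partial_\delta A(w_\delta) = 2^*\int_B a(x)|w_\delta|^{2^*-2}w_\delta\,v\,\mathrm dx$. Using $a(x)=a(y)+(a(x)-a(y))$, the pointwise bound $PV^{\varepsilon,y}\le V^{\varepsilon,y}$, the expansion $|w_\delta|^{2^*-2}w_\delta = (PV^{\varepsilon,y})^{2^*-1} + (2^*-1)(PV^{\varepsilon,y})^{2^*-2}\delta v + O(\delta^2(\,\cdot\,)^{2^*-3}v^2)$, and orthogonality to kill $\int (V^{\varepsilon,y})^{2^*-1}v$ up to the projection error $\varphi^{\varepsilon,y}=V^{\varepsilon,y}-PV^{\varepsilon,y}=O((\varepsilon/d)^{(N-2)/2})$, I expect $\partial_\delta A(w_\delta) = (2^*-1)2^*\,\delta\int_B a(x)(PV^{\varepsilon,y})^{2^*-2}v^2\,\mathrm dx + (\text{projection + potential-oscillation errors})$. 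Here the hypothesis $y-\bar y = \varepsilon\zeta$ and assumption \ref{A2} ($|a(x)-a(\tilde y)|=o(|x-\tilde y|^\theta)$ with $4<\theta<N$) are used to make $\int_B(a(x)-a(y))(V^{\varepsilon,y})^{2^*-1}|v|$ and its $\delta$-derivative analogue of order $o(\varepsilon^\theta)$, hence negligible against $\varepsilon^2$ and against $\delta^2$ when $\delta\ge C^+\varepsilon^{1+\kappa}$. The cross term $\int a(y_0)(PV)^{2^*-2}v^2$ is nonnegative and uniformly controlled, so $Q(w_\delta)\partial_\delta A(w_\delta)$ contributes a term $\gtrsim \delta$ to the bracket with the \emph{wrong} sign — therefore the positivity must come from the $\frac{N}{2}\partial_\delta Q\cdot A$ piece, specifically from $\frac{N}{2}\cdot 2\delta\cdot ND$ dominating $\frac{N-2}{2}\cdot ND\cdot(2^*-1)2^*\delta\int(PV)^{2^*-2}v^2$; this is a spectral comparison $\|v\|^2 = 1$ versus $(2^*-1)\int(PV^{\varepsilon,y})^{2^*-2}v^2$ on the orthogonal complement of the bubble's tangent space, which by Rey's sharp spectral inequality (the second eigenvalue of the linearized operator being strictly below the relevant threshold on $(\text{Span})^\perp$) gives the needed strict gap, provided $\tilde\lambda$ is small enough — hence the dimension restriction $N>6$ (needed so that $\varepsilon^2$, not $\varepsilon^{N-2}$ or $\varepsilon^{N/2}$, is the leading error and so that $(PV^{\varepsilon,y})^{2^*-2}v^2$ is integrable with the right rate) and the bound on $\lambda$.

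After assembling the leading terms I would collect the error terms $O((\varepsilon/d)^{N-2})$, $O(\varepsilon^3)$ (or $O(\varepsilon^3|\log\varepsilon|)$ in borderline dimensions, but $N>6$ here), $o(\varepsilon^\theta)$ from \ref{A2}, and the $O(\delta^2)$ remainder from the Taylor expansion of $A$, and show all of them are $o(\delta)$ once $\delta\ge C^+\varepsilon^{1+\kappa}$ with $C^+$ large and $\varepsilon,\delta$ small: e.g. $(\varepsilon/d)^{N-2}\le \varepsilon^{N-2}/d^{N-2}$ and since $y-\bar y=\varepsilon\zeta$ keeps $d$ bounded below, $\varepsilon^{N-2}=o(\varepsilon^{1+\kappa})=o(\delta)$ for $N>6$, $0<\kappa<1$; similarly $\varepsilon^2 = o(\delta)$ fails in general, so the $\varepsilon^2$-terms must instead be shown to have a sign or to be absorbed — this is where I expect the real fight, and where the precise constant $\frac{1}{N+4}$ is pinned down by requiring $\frac{N}{2}\cdot 2 - \frac{N-2}{2}(2^*-1)2^*\cdot(\text{spectral constant}) - (\text{const})\tilde\lambda > 0$. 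The main obstacle is precisely this: controlling $\int_B c(x)\,v\,PV^{\varepsilon,y}$ and the quadratic form $\int_B c(x)v^2$ against the positive leading term uniformly in admissible $v$, which forces both the sharp spectral inequality (Rey, Appendix~D of \cite{ReyO}) and the explicit smallness $\tilde\lambda<\frac{1}{N+4}$; everything else is bookkeeping of concentration estimates already catalogued as Estimates~\ref{proGrad}, \ref{ConfTerms}, \ref{ErrorPro} and Lemma~\ref{ProjCutoff}.
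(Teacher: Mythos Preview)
Your overall strategy matches the paper's: differentiate the homogeneous functional, expand $\partial_\delta Q$ and $\partial_\delta A$ using orthogonality and the concentration estimates, and reduce positivity to a spectral inequality on the orthogonal complement. However, two points in your plan are misidentified and would derail the argument if followed literally.

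First, the cross term $\int_B c(x)\,v\,PV^{\varepsilon,y}$ is \emph{not} where Rey's inequality or the bound $\tilde\lambda<\frac{1}{N+4}$ enter; for $N>6$ this term is simply $O(\varepsilon^2)$ and goes into the error budget. You also dropped the genuine $\delta$--linear contribution $-2\delta\int_B c(x)v^2$ from $\partial_\delta Q$; this term is \emph{not} small and must be kept. The leading coefficient of $\delta$ in $\partial_\delta\tilde{\mathcal J}_a$ is (up to a positive factor)
\[
1-\int_B c(x)v^2-(2^*-1)\int_B (PV^{\varepsilon,y})^{2^*-2}v^2,
\]
and \emph{this} is where Rey's spectral inequality $1-(2^*-1)\int (V^{\varepsilon,y})^{2^*-2}v^2\ge \frac{4}{N+4}$ combines with the Hardy-type bound $\int_B c(x)v^2\le 4\tilde\lambda$ to force positivity exactly when $\tilde\lambda<\frac{1}{N+4}$.

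Second, your claim that ``$\varepsilon^2=o(\delta)$ fails in general'' is wrong and inverts the role of the hypothesis: since $\delta\ge C^+\varepsilon^{1+\kappa}$ with $0<\kappa<1$, one has $\varepsilon^2/\delta\le \varepsilon^{1-\kappa}/C^+\to 0$, so $\varepsilon^2=o(\delta)$ holds automatically. There is no ``fight'' over the $\varepsilon^2$ errors; they are absorbed directly. The only nontrivial step is the sign of the $\delta$--coefficient above.
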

\begin{proof} Recall that
\begin{align*}
\tilde{\mathcal J}_a(P  V^{\varepsilon, y}+\delta v)=&\frac{1}{N}\left(\underbrace{\int_{B}\left[|\nabla (P  V^{\varepsilon, y}+\delta v)|^2- c(x) (P  V^{\varepsilon, y}+\delta v)^2\right] \mathrm{~d}x}_{A}\right)^{\frac{N}{2}}\times\\
&\qquad\left(\underbrace{\int_{B} a(x) |P  V^{\varepsilon, y}+\delta v|^{2^*} \mathrm{~d}x}_{\tilde A}\right)^{\frac{(2-N)}{2}}.
\end{align*}   
Therefore, for $u:=P  V^{\varepsilon, y}+\delta v$,
\begin{align*}
\partial_{\delta} \tilde{\mathcal J}_a(u)=& \frac{1}{2} A^ {(N-2) / 2} \tilde A^{(2-N)/2}\partial_\delta A - \frac{1}{2^*}A^{N/2} \tilde A^{-N/2} \partial_\delta \tilde A.
\end{align*}
Now we will estimate $A,\tilde A,\partial_\delta A, \partial_\delta \tilde A.$ Let's start by estimating $ A.$ 
\begin{align*}
    A&:= \int_{B}\left[|\nabla (P  V^{\varepsilon, y}+\delta v)|^2- c(x) (P  V^{\varepsilon, y}+\delta v)^2\right]\mathrm{~d}x\\
    &= \int_{B}\left(|\nabla (P  V^{\varepsilon, y})|^2+ |\nabla (\delta v)|^2 +2 \nabla (P  V^{\varepsilon, y}). \nabla (\delta v)- c(x) \left[(P  V^{\varepsilon, y})^2+ (\delta v)^2+ 2P  V^{\varepsilon, y} \delta v \right]\right) \mathrm{~d}x \\
    &=ND+O\left(\left(\frac{\varepsilon}{d}\right)^{N-2}\right)+ O(\delta^2)+O(\varepsilon^2)+\delta\begin{cases}
O\left(\varepsilon^{\frac{3}{2}}\right) & \text { if } N=5,\\
O\left(\varepsilon^2 \left(\log \left(\frac{1}{\varepsilon}\right)\right)^{2 / 3}\right) & \text { if } N=6,\\
O\left(\varepsilon^{2}\right) & \text { if } N>6. 
\end{cases}
\end{align*}
In the final step, we applied Estimate \ref{proGrad}, \eqref{ConfTerm1} from Estimate \ref{ConfTerms}, the orthogonality condition for $v$ with $\|v\|_{H_{0}^{1}(B(0,1))}=1$, and the Hardy inequality in hyperbolic space \cite[Corollary 2.2]{BGG}. Moreover, to obtain the above estimate of the last integral, note that
\begin{align*}
    \left|\int_{B}c(x) (P  V^{\varepsilon, y})v\mathrm{~d}x\right|\leq \left|\int_{B(y,r)}\right|+\left|\int_{B\setminus B(y,r)}\right|,
\end{align*}
where $r$ is small. We can eliminate the singularity for the integral near the point $y$ and apply Estimate \eqref{vTerm1}. For the integral outside the ball, we use H\"older's inequality followed by Hardy's inequality, as demonstrated in the proof of Estimate \ref{ConfTerm1}.\\ \\
Now consider 
\begin{align*}
    \tilde A & :=\int_{B} a(x) |P  V^{\varepsilon, y}+\delta v|^{2^*} \mathrm{~d}x
    = \int_{B} a(x)(P  V^{\varepsilon, y})^{2^*} \mathrm{~d}x   + 2^*\int_{B} a(x)(P  V^{\varepsilon, y})^{2^*-1}\delta v \mathrm{~d}x \\
& \hspace{5.2cm}+ O\left(\int_{B}a(x)(P  V^{\varepsilon, y})^{2^*-2}\delta^2 v^{2}+ a(x)|\delta v|^{2^*} \mathrm{~d}x \right).
\end{align*}
Considering each of the terms individually
\begin{align*}
    \int_{B} a(x)|P  V^{\varepsilon, y}|^{2^*} \mathrm{~d}x   &= \int_{B} a(y)|P  V^{\varepsilon, y}|^{2^*} \mathrm{~d}x+\int_{B} (a(x)-a(y))|P  V^{\varepsilon, y}|^{2^*} \mathrm{~d}x\\
    &= a(y)ND + O\left(\left(\frac{\varepsilon}{d}\right)^{N-2}\right)+O(\varepsilon^{2}) \text{ (following Lemma \ref{energyMain})}.
\end{align*}
Now, applying Estimate \ref{vTerm2}, we obtain
\begin{align*}
2^*\int_{B} a(x) |P  V^{\varepsilon, y}|^{2^*-1}|
\delta v| \mathrm{~d}x =O(\varepsilon^{\frac{\theta(N+2)}{2N}} \delta).
\end{align*}
Finally, applying H\"older inequality to the remaining terms yields
\begin{align*}
\int_{B} a(x) |P  V^{\varepsilon, y}|^{2^*-2}|
\delta v|^2 \mathrm{~d}x & \leq \|a\|_{\infty} \delta^2 \left[\int_{\Rn}\left( |V^{\varepsilon, y}|^{2^*-2}\right)^{\frac{2^*}{2^*-2}}\mathrm{~d}x\right]^{\frac{2^*-2}{2^*}} \left[\int_{B}|v|^{2^*} \mathrm{~d}x\right]^\frac{2}{2^*} =O(\delta^2).
\end{align*}
Consequently, we derive 
\begin{align*}
    \tilde A= a(y)ND +O\left(\left(\frac{\varepsilon}{d}\right)^{N-2}\right)+ O(\varepsilon ^2)+ O(\varepsilon^{\frac{\theta(N+2)}{2N}}\delta)+O(\delta ^2 )+ O(\delta^{2^*}).
\end{align*}
Using these estimates, we find
\begin{align*}
\partial_\delta \tilde{\mathcal J}_a(u) &= \frac{1}{2}\left(N D+o(1)\right)^{(N-2) / 2}\left(a(y) N D+o(1)\right)^{(2-N) / 2} \partial_\delta A \\
&\qquad-\frac{1}{2^*}\left(N D+o(1)\right)^{N / 2}\left(a(y) N D+o(1)\right)^{-N / 2} \partial_\delta \tilde A,
\end{align*}
as $(\varepsilon, \delta) \rightarrow 0$.\\
Now estimating $\partial_\delta A$ and $\partial_\delta \tilde A$: Using the expression of $A$, it follows that
\begin{align*}
&\partial_\delta A=2 \delta\left(1-\int_{B} c(x)v^2 \mathrm{~d}x\right)- 2\int_{B}c(x) (P  V^{\varepsilon, y})v\mathrm{~d}x\\
&= 2 \delta\left(1-\int_{B} c(x)v^2\mathrm{~d}x\right)+ \begin{cases}
O\left(\varepsilon^{\frac{3}{2}}\right) & \text { if } N=5,\\
O\left(\varepsilon^2 \left(\log \left(\frac{1}{\varepsilon}\right)\right)^{2 / 3}\right) & \text { if } N=6,\\
O\left(\varepsilon^{2}\right) & \text { if } N>6.
\end{cases} 
\end{align*}
By applying Estimate \ref{vTerm2}, we can easily deduce
\begin{align*}
\partial_\delta \tilde A&= 2^*(2^* -1)\delta\int_{B}a(x) |P  V^{\varepsilon, y}|^{2^*-2}v^2 \mathrm{~d}x + O(\varepsilon^{\frac{\theta(N+2)}{2N}})+ o(\delta).
\end{align*}
Estimating the integral in the above expression yields
\begin{align*}
    & \int_{B} a(x)\delta|P  V^{\varepsilon, y}|^{2^*-2}v^2\mathrm{~d}x 
    =  \int_{B} a(y)\delta|P  V^{\varepsilon, y}|^{2^*-2}v^2 \mathrm{~d}x + \int_{B} (a(x)-a(y))\delta|P  V^{\varepsilon, y}|^{2^*-2}v^2\mathrm{~d}x.
    \end{align*}
    Now
    \begin{align*}
    \int_{B} |a(x)-a(y)|\delta|P  V^{\varepsilon, y}|^{2^*-2}v^2\mathrm{~d}x&\leq  c \delta \int_{B}\frac{|x-y|}{1+|x-y|} |V^{\varepsilon, y}|^{2^*-2}v^2\mathrm{~d}x\\
    &\leq c \delta \left[\int_{B}\left(\frac{|x-y|}{1+|x-y|} |V^{\varepsilon, y}|^{2^*-2}\right)^{\frac{2^*}{2^*-2}}\mathrm{~d}x\right]^{\frac{2^*-2}{2^*}} \left[\int_{B}|v|^\frac{2.2^*}{2} \mathrm{~d}x\right]^\frac{2}{2^*}\\
    &\leq c \delta \varepsilon \underbrace{\left[\int_{\Rn}|z|^{N/2} |V(z)|^{2^*}\: \mathrm{~d}z\right]^{2/N}}_{< \infty} \underbrace{\int_{B}|\nabla v(x)|^{2}\mathrm{~d}x}_{=1}\\
    &=O(\delta \varepsilon),
    \end{align*}
    where for the penultimate step, Poincar\'e's inequality has been utilized.\\
\noindent
Therefore, we get
\begin{align*}
\partial_\delta \tilde A&= 2^*(2^* -1)\delta a(y)\int_{B}|P  V^{\varepsilon, y}|^{2^*-2}v^2\mathrm{~d}x + O(\delta \varepsilon)+O(\varepsilon^{\frac{\theta(N+2)}{2N}})+ o(\delta).
\end{align*}
Also, note that $\frac{\theta(N+2)}{2N}>2$ since $\theta >4$. Thus, combining all the above estimates implies
\begin{align*}
\partial_\delta \tilde{J}(u)&=a(y)^{(2-N) / 2} \delta\left(1-\int_{B} c(x) v^2\mathrm{~d}x-(2^*-1) \int_{B} |P  V^{\varepsilon, y}|^{2^*-2}v^{2}\mathrm{~d}x\right)\\
&\hspace{4.5cm}+O(\varepsilon^2)+O(\delta \varepsilon)+O(\varepsilon^{\frac{\theta(N+2)}{2N}})+ o(\delta).
\end{align*}
Now it remains to show that the bracketed term is positive. For that, we use the inequality \cite[D.1]{ReyO} to obtain
\begin{equation*}
    1- (2^*-1) \int_{B} |P  V^{\varepsilon, y}|^{2^*-2}v^{2}\mathrm{~d}x \geq 1- (2^*-1) \int_{B} |  V^{\varepsilon, y}|^{2^*-2}v^{2}\mathrm{~d}x \geq \frac{4}{N+4} \int_{B} |\grad v|^2 \mathrm{~d}x.
\end{equation*}
Furthermore, \cite[Corollary~2.2]{BGG} implies
\begin{equation*}
(4 \lambda- N(N-2))\int_{B}|\nabla v|^2 \mathrm{~d}x\geq  \left(\lambda-\frac{N(N-2)}{4}\right)\int_{B}\left(\frac{2}{1-|x|^2}\right)^2 v^2 \mathrm{~d}x.
\end{equation*}
Therefore, for $\lambda < \frac{N(N-2)}{4}+\frac{1}{N+4}$, we get
\begin{align*}
    &1-\int_{B} c(x) v^2\mathrm{~d}x-(2^*-1) \int_{B} |P  V^{\varepsilon, y}|^{2^*-2}v^{2}\mathrm{~d}x \\
    &\geq \frac{4}{N+4} \int_{B} |\grad v|^2- (4 \lambda- N(N-2))\int_{B}|\nabla v|^2 \mathrm{~d}x >0.
    \end{align*}
    This concludes the lemma.
  \end{proof}

  \medskip
  \noindent
Let
\begin{equation*}
\mathcal{V}:=\left\{u \in H^1_{0}(B(0,1)) \text { s.t. }\left\|u-c PV^{\varepsilon, y}\right\|<\nu \text { for some } 0<\varepsilon<\nu, c>\mu \text { and } y \in B(0,1)\right\}.
\end{equation*}
In $\mathcal{V}$, define the vector field $\Upsilon$ by
\begin{equation*}
u:=c(u) P V^{\varepsilon(u), y(u)}+\delta(u) v(u) \longmapsto c(u)  \partial_{\varepsilon} PV^{\varepsilon, y}{ }_{\mid \varepsilon=\varepsilon(u), y=y(u)},
\end{equation*}
where $v$ satisfies condition \eqref{OrthCondn} and $\|v\|_{H_{0}^{1}(B(0,1))}=1$. By the definition of $\mathcal{V}$, the vector field $\Upsilon$ is well-defined and exhibits local Lipschitz continuity, even if unbounded as $\varepsilon(u) \rightarrow 0$. Let $\eta(., u)$ represent the flow generated by $\Upsilon$ starting from the initial condition $u$. Initially, $\eta$ is defined only locally.

\medskip

\begin{lemma} \label{defo}
     Let $u_0 \in \mathcal{V}$. Then $\eta\left(., u_0\right)$ is defined as long as $\varepsilon\left(\eta\left(., u_0\right)\right)<\nu$, and
     \begin{enumerate}
     \item  $c\left(\eta\left(., u_0\right)\right)=c\left(u_0\right)$,
 \item $\delta\left(\eta\left(., u_0\right)\right)=\delta\left(u_0\right)$,
 \item  $\left\|y\left(\eta\left(t, u_0\right)\right)-y\left(u_0\right)\right\| \leq t O\left(\delta\left(u_0\right)\right)$,
 \item  $\partial_t \varepsilon\left(\eta\left(t, u_0\right)\right)=1+O\left(\delta\left(u_0\right)\right)$.
\end{enumerate}
\end{lemma}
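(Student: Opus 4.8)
The plan is to realise $\eta$ as the flow of $\Upsilon$ via Cauchy--Lipschitz, write down an equation of motion for the coordinates $(c,\varepsilon,y,\delta,v)$ coming from the decomposition lemma \cite[Proposition~5.2]{BahriB} recalled above, and then read off each of (1)--(4) by pairing that equation with the appropriate element of the moving frame $\{PV^{\varepsilon,y},\partial_\varepsilon PV^{\varepsilon,y},\partial_{y_1}PV^{\varepsilon,y},\dots,\partial_{y_N}PV^{\varepsilon,y},v\}$. Since $\Upsilon$ is locally Lipschitz on $\mathcal V$, there is a unique maximal integral curve $t\mapsto\eta(t,u_0)$, and a continuation argument shows it persists exactly while $\eta(t,u_0)\in\mathcal V$; because the a priori bounds on $c,\delta,y$ obtained below keep those coordinates in range, the only way to leave $\mathcal V$ is $\varepsilon(\eta(t,u_0))\uparrow\nu$, which is the claimed lifetime. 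Differentiating $\eta(t)=c(t)PV^{\varepsilon(t),y(t)}+\delta(t)v(t)$ (the coefficient maps $c(\cdot),\varepsilon(\cdot),y(\cdot),\delta(\cdot),v(\cdot)$ being smooth, as recalled above) and using $\partial_t\eta=\Upsilon(\eta)=c(t)\,\partial_\varepsilon PV^{\varepsilon(t),y(t)}$ gives the master identity in $H^1_0(B(0,1))$:
\begin{equation*}
\dot c\,PV^{\varepsilon,y}+c(\dot\varepsilon-1)\,\partial_\varepsilon PV^{\varepsilon,y}+c\sum_{i=1}^{N}\dot y_i\,\partial_{y_i}PV^{\varepsilon,y}+\dot\delta\,v+\delta\,\dot v=0 .
\end{equation*}

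For (2) I would take the $H^1_0$-inner product of the master identity with $v$: the first three terms vanish by the orthogonality conditions \eqref{OrthCondn}, $\langle\delta\dot v,v\rangle=\delta\langle\dot v,v\rangle=0$ since $\|v\|\equiv1$ forces $\langle\dot v,v\rangle=0$, and $\langle\dot\delta v,v\rangle=\dot\delta$; hence $\dot\delta\equiv0$, i.e. $\delta(\eta(t,u_0))=\delta(u_0)$. For (1), differentiating $\langle PV^{\varepsilon(t),y(t)},v(t)\rangle=0$ and invoking \eqref{OrthCondn} again yields $\langle\dot v,PV^{\varepsilon,y}\rangle=0$; pairing the master identity with $PV^{\varepsilon,y}$ then leaves
\begin{equation*}
\dot c\,\|PV^{\varepsilon,y}\|^{2}=-c(\dot\varepsilon-1)\,\langle\partial_\varepsilon PV^{\varepsilon,y},PV^{\varepsilon,y}\rangle-c\sum_{i=1}^{N}\dot y_i\,\langle\partial_{y_i}PV^{\varepsilon,y},PV^{\varepsilon,y}\rangle .
\end{equation*}
Since $\|PV^{\varepsilon,y}\|^{2}=\int_{\Rn}|\nabla V|^{2}\,\mathrm{d}x+O\big((\varepsilon/d)^{N-2}\big)$ by Estimate~\ref{proGrad} and Estimate~\ref{ErrorPro}, the inner products on the right are $o(1)$; combined with the bound $|\dot\varepsilon-1|+|\dot y|=O(\delta)$ from the next paragraph and a Gr\"onwall argument, this gives $c(\eta(t,u_0))=c(u_0)$.

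For (3)--(4) I would pair the master identity with $\partial_\varepsilon PV^{\varepsilon,y}$ and with each $\partial_{y_i}PV^{\varepsilon,y}$. The terms $\langle\delta\dot v,\partial_\varepsilon PV^{\varepsilon,y}\rangle$ and $\langle\delta\dot v,\partial_{y_i}PV^{\varepsilon,y}\rangle$ are controlled by differentiating the remaining relations in \eqref{OrthCondn}, which express $\langle\dot v,\partial_\varepsilon PV^{\varepsilon,y}\rangle$ and $\langle\dot v,\partial_{y_i}PV^{\varepsilon,y}\rangle$ linearly in $\dot\varepsilon,\dot y$ with coefficients of the shape $\langle\partial^2 PV^{\varepsilon,y},v\rangle$. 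Substituting, one obtains a linear system $\mathbb G(\varepsilon,y)\,(\dot c,\dot\varepsilon-1,\dot y_1,\dots,\dot y_N)^{\top}=\delta\,\mathbf r$ in which, after the usual weighting $\varepsilon\partial_\varepsilon$, $\varepsilon\partial_{y_i}$ of the frame, $\mathbb G$ is a uniformly invertible perturbation of the Gram matrix of the normalized Talenti directions and $\mathbf r=O(1)$; inverting yields $\dot c=O(\delta)$, $\dot\varepsilon-1=O(\delta)$, $\dot y_i=O(\delta)$, which is (4). Since $\delta(\eta(t,u_0))=\delta(u_0)$ by (2), integrating $|\partial_t y(\eta(t,u_0))|\le O(\delta(u_0))$ over $[0,t]$ gives (3).

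I expect the delicate point to be the invertibility of $\mathbb G(\varepsilon,y)$ together with the estimate $\mathbf r=O(1)$, uniformly as $\varepsilon\to0$: these amount to checking that the second-derivative quantities $\langle\partial_\varepsilon^2 PV^{\varepsilon,y},v\rangle$, $\langle\partial_\varepsilon\partial_{y_i}PV^{\varepsilon,y},v\rangle$, $\langle\partial_{y_i}\partial_{y_j}PV^{\varepsilon,y},v\rangle$ are genuinely lower order once the correct powers of $\varepsilon$ are extracted---the same non-degeneracy input that underlies the whole Lyapunov--Schmidt reduction---and, because of the conformal reduction to the Euclidean ball, this has to be made uniform up to $\partial B(0,1)$, which is precisely where the cut-off/projection estimates (Lemma~\ref{ProjCutoff} and the Estimates of the Appendix) enter, keeping $d=\operatorname{dist}(y,\partial B(0,1))$ bounded below.
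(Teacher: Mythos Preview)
Your approach is essentially the same as the paper's: both differentiate the decomposition along the flow and analyse the resulting identity using the orthogonality conditions \eqref{OrthCondn} together with the Gram--matrix estimates of Estimates~\ref{proGrad}, \ref{Proj&ProjDer}, \ref{DoubleDer}. The paper packages this by introducing the comparison curve $\tilde\eta(t)=c(u_0)PV^{\varepsilon(u_0)+t,\,y(u_0)}+\delta(u_0)v(u_0)$ and observing that $\partial_t\tilde\eta(0)=\Upsilon(u_0)$, so the $t$--derivatives of the coordinate functions of $\eta$ and of $\tilde\eta$ agree at $t=0$; but once one expands $\partial_t\tilde\eta(0)$ in the moving frame this is exactly your ``master identity'', and the subsequent pairings are the same. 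Your argument for (2) is clean and correct: pairing with $v$ kills all terms by \eqref{OrthCondn} and $\langle\dot v,v\rangle=0$, giving $\dot\delta\equiv0$ exactly. Your outline for (3)--(4) via the near--diagonal linear system and the second--derivative bounds is also the right one, and your final paragraph correctly isolates the key input (uniform invertibility of the Gram block after the rescaling $\varepsilon\partial_\varepsilon$, $\varepsilon\partial_{y_i}$).

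The one point to flag is your argument for (1). Pairing with $PV^{\varepsilon,y}$ gives
\[
\dot c\,\|PV^{\varepsilon,y}\|^{2}=-c(\dot\varepsilon-1)\langle\partial_\varepsilon PV,PV\rangle-c\sum_i\dot y_i\langle\partial_{y_i}PV,PV\rangle,
\]
and the cross terms on the right are $O(\varepsilon^{N-3})$ by \eqref{ProjDelEp}, \eqref{ProDely}; combined with $|\dot\varepsilon-1|+|\dot y|=O(\delta)$ this yields $\dot c=O(\delta)\cdot o(1)$, but a Gr\"onwall argument then produces only $c(t)=c(u_0)\bigl(1+o(1)\bigr)$, not exact constancy. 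Either state the weaker (and sufficient) conclusion $\dot c=O(\delta)$, or supply a separate argument for exact invariance; as written, ``Gr\"onwall $\Rightarrow$ equality'' is not correct.
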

\begin{proof}
The proof follows along the lines of \cite[Lemma~5.7]{Sm}. We shall briefly sketch the proof. Consider the curve $\tilde{\eta}$ defined by
\begin{equation}
\tilde{\eta}(t):=c\left(u_0\right) PV^{\varepsilon\left(u_0\right)+t, y\left(u_0\right)}+\delta\left(u_0\right) v\left(u_0\right), \label{PertCur}
\end{equation}
with $\tilde{c}, \tilde{\varepsilon}, \tilde{y}, \tilde{\delta},$ and $\tilde{v}$ being its coordinate functions.\\
\noindent
Since $\partial_t \tilde{\eta}(0)=\Upsilon\left(u_0\right)$, the derivative $\partial_t \varepsilon\left(\eta\left(t, u_0\right)\right)_{\mid t=0}$  coincides with $\partial_t \tilde{\varepsilon}(0)$, and this holds for the other coordinates as well. Indeed, for $\eta\left(., u_0\right)$ to be defined as a flow from $\mathbb{R} \times \mathcal{V} \rightarrow \mathcal{V}$, we have 
\begin{equation*}
 \eta\left(t, u_0\right)= c(\eta\left(t, u_0\right))PV^{\varepsilon(\eta\left(t, u_0\right)), y(\eta\left(t, u_0\right))}+\delta(\eta\left(t, u_0\right)) v(\eta\left(t, u_0\right)).  
\end{equation*}
Differentiating the above equation with respect to $t$, we have
\begin{align} \label{eq1}
 \partial_t \eta\left(0, u_0\right)&= \partial_t c(\eta\left(t, u_0\right))_{\mid t=0} PV^{\varepsilon(u_0), y(u_0)}+ c(u_0) \partial_{\varepsilon}PV^{\varepsilon(u_0), y(u_0)} \partial_t \varepsilon(\eta\left(t,u_0\right))_{\mid t=0}\\ \notag
 & \hspace{-0.7cm}+ c(u_0) \left\langle\nabla_y PV^{\varepsilon( u_0), y(u_0)}, \partial_{t} y(\eta\left(t, u_0\right))_{\mid t=0}\right\rangle +\partial_{t} \delta(\eta\left(t, u_0\right))_{\mid t=0} v(u_0)
    + \delta(u_0) \partial_t v(\eta\left(t, u_0\right))_{\mid t=0}. \notag
\end{align}
Also, the flow $\eta$ satisfies the following ODE:
\begin{equation}
    \partial_t \eta\left(t, u_0\right)= \Upsilon\left(\eta\left(t, u_0\right)\right), \;\;\;\;
    \eta\left(0, u_0\right)= u_0. \label{eqq3}
\end{equation}
\noindent
Now differentiating $\tilde{\eta}$ with respect to $t$ gives
\begin{align}  \label{eq2}
    \partial_t \tilde{\eta}(0)
      &=\partial_t\tilde{c}(0) PV^{\varepsilon\left(u_0\right), y\left(u_0\right)}+ c\left(u_0\right)  \partial_{\varepsilon}PV^{\varepsilon\left(u_0\right), y\left(u_0\right)}\partial_t \tilde{\varepsilon}(0) \notag \\
     & + c\left(u_0\right)\left\langle\nabla_{y} PV^{\varepsilon\left(u_0\right), y\left(u_0\right)}, \partial_t \tilde{y}(0)\right\rangle + \partial_{t} \tilde{\delta}(0) v(u_0)+  \delta(u_0) \partial_t \tilde{v}(0).
\end{align}
Thus, keeping in mind \eqref{eqq3} and comparing \eqref{eq1} and \eqref{eq2}, we find that the derivative $\partial_t \varepsilon\left(\eta\left(t, u_0\right)\right)_{\mid t=0}$ is the same as $\partial_t \tilde{\varepsilon}(0)$, and this equivalence holds for the other coordinates as well.

\medskip
\noindent
The rest of the proof hinges on the orthogonality condition of $v,$ and on estimates \ref{proGrad}, \eqref{ProDely}, and \eqref{ProjDelEp} from Estimate \ref{Proj&ProjDer}. For brevity, we skip the details.
\end{proof}

\medskip
%%%%%%%%%%%%%%%%%%%%%%
\begin{lemma} \label{epsiChan}
   Let $C^{+}$ and $y$ be as specified by Lemma \ref{delta_dep}, and consider $N >6.$ There exist $0<\varepsilon_3^{+}<$ $\varepsilon_2^{+}$, $0<\kappa^{\prime}<1$ and $0<\delta_3^{+}<\delta_2^{+}$ such that
\begin{equation*}
\left(\delta \leq 2 C^{+} \varepsilon^{1+\kappa^{\prime}}\right) \Longrightarrow \partial_t \tilde{\mathcal J}_a\left(\eta\left(t, PV^{\varepsilon, y}+\delta v\right)\right)_{\mid t=0}<0
\end{equation*}
for all $\varepsilon<\varepsilon_3^{+}, \delta<\delta_3^{+}$and $v \in {H}^{1}_{0}\left(B(0,1)\right)$ that satisfies \eqref{OrthCondn} with $\|v\|_{H_{0}^{1}(B(0,1))}=1.$
\end{lemma}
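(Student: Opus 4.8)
The plan is to differentiate $\tilde{\mathcal J}_a$ along the flow $\eta$ and exploit the regime $\delta \leq 2C^+\varepsilon^{1+\kappa'}$ to show the leading contribution comes from the $\partial_\varepsilon P V^{\varepsilon,y}$ direction, which strictly decreases the energy. Write $u=PV^{\varepsilon,y}+\delta v$ (we set $c=1$ as noted before Lemma~\ref{energyMain}). By the chain rule,
\begin{equation*}
\partial_t\tilde{\mathcal J}_a\bigl(\eta(t,u)\bigr)_{\mid t=0}=\bigl\langle \tilde{\mathcal J}_a'(u),\Upsilon(u)\bigr\rangle
=\bigl\langle \tilde{\mathcal J}_a'(u),\partial_\varepsilon PV^{\varepsilon,y}\bigr\rangle,
\end{equation*}
so the task reduces to estimating a single inner product. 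The first step is to expand $\tilde{\mathcal J}_a'(u)$ around $PV^{\varepsilon,y}$: using the homogeneity-of-order-zero form of $\tilde{\mathcal J}_a$ on $\tilde{\mathcal N}$ (or directly differentiating the product expression as in Lemma~\ref{delta_dep}), one gets
$\tilde{\mathcal J}_a'(u)=\tilde{\mathcal J}_a'(PV^{\varepsilon,y})+\delta\,\tilde{\mathcal J}_a''(PV^{\varepsilon,y})v+O(\delta^2)$, and then one pairs each term with $\partial_\varepsilon PV^{\varepsilon,y}$.

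Next I would estimate the three resulting contributions separately. For the main term $\langle\tilde{\mathcal J}_a'(PV^{\varepsilon,y}),\partial_\varepsilon PV^{\varepsilon,y}\rangle$, one differentiates the expansion of $\tilde{\mathcal J}_a(PV^{\varepsilon,y})$ obtained in Lemma~\ref{energyMain} with respect to $\varepsilon$: the gradient term contributes $O((\varepsilon/d)^{N-2})$ (after $\partial_\varepsilon$, an extra $\varepsilon^{-1}$ factor, so $O(\varepsilon^{N-3}/d^{N-2})$), the potential term $\int c(x)(PV^{\varepsilon,y})^2$ contributes a strictly negative leading term of order $-\tilde\lambda\,\varepsilon/(1-|y|^2)^2$ after differentiation (this is where $\lambda>\frac{N(N-2)}{4}$, i.e. $\tilde\lambda>0$, is used), and the $a(x)$-perturbation term contributes $o(\varepsilon)$ by Assumption~\ref{A2} (the $o(|x-\tilde y|^\theta)$ decay with $\theta>4$ and the substitution $y-\bar y=\varepsilon\zeta$ from Lemma~\ref{delta_dep} kill the would-be $O(\varepsilon)$ term coming from $\nabla a(y)$). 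So this main term is $\leq -c_0\varepsilon$ for some $c_0>0$ and $\varepsilon$ small. For the cross term $\delta\langle\tilde{\mathcal J}_a''(PV^{\varepsilon,y})v,\partial_\varepsilon PV^{\varepsilon,y}\rangle$, the orthogonality conditions \eqref{OrthCondn} on $v$ make the "free" (i.e.\ $-\Delta$) part of $\tilde{\mathcal J}_a''$ vanish when paired with $\partial_\varepsilon PV^{\varepsilon,y}$, up to the projection error (use Estimate~\ref{Proj\&ProjDer}), leaving only the $c(x)$-Hardy piece and the non-quadratic remainder; using Estimate~\ref{vTerm2} and the Hardy inequality in hyperbolic space \cite[Corollary~2.2]{BGG} one bounds this by $O(\delta\varepsilon^{\theta(N+2)/2N})+O(\delta\varepsilon)$, hence $O(\delta\varepsilon)$ since $\theta>4$. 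The remaining $O(\delta^2)$ term, paired against $\|\partial_\varepsilon PV^{\varepsilon,y}\|=O(1/\varepsilon)$, is $O(\delta^2/\varepsilon)$.

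Combining, $\partial_t\tilde{\mathcal J}_a(\eta(t,u))_{\mid t=0}\leq -c_0\varepsilon+O(\delta\varepsilon)+O(\delta^2/\varepsilon)+O(\varepsilon^{N-3})$. Under the hypothesis $\delta\leq 2C^+\varepsilon^{1+\kappa'}$ we have $\delta\varepsilon=O(\varepsilon^{2+\kappa'})$ and $\delta^2/\varepsilon=O(\varepsilon^{1+2\kappa'})$, both $o(\varepsilon)$, and $\varepsilon^{N-3}=o(\varepsilon)$ since $N>6\geq 5$; so for $\varepsilon<\varepsilon_3^+$ and $\delta<\delta_3^+$ small enough the right-hand side is $<0$, which is the claim. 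The constants $\varepsilon_3^+<\varepsilon_2^+$, $\delta_3^+<\delta_2^+$ and $0<\kappa'<1$ are then fixed to make all error terms strictly dominated by $\tfrac12 c_0\varepsilon$.

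The main obstacle I expect is controlling the cross term $\delta\langle\tilde{\mathcal J}_a''(PV^{\varepsilon,y})v,\partial_\varepsilon PV^{\varepsilon,y}\rangle$ with the required precision: the orthogonality conditions \eqref{OrthCondn} are imposed on $v$ against $PV^{\varepsilon,y}$ and its derivatives, but $\tilde{\mathcal J}_a''$ involves the nonlinear weight $a(x)|PV^{\varepsilon,y}|^{2^*-2}$ and the singular potential $c(x)$, so one must carefully convert the $H_0^1$-orthogonality into the appropriate $L^2$-type orthogonality and absorb the projection discrepancy $PV^{\varepsilon,y}-V^{\varepsilon,y}$ (of size $O((\varepsilon/d)^{(N-2)/2})$) and the cutoff discrepancy (Lemma~\ref{ProjCutoff}); keeping every such remainder strictly $o(\varepsilon)$ is exactly where the dimension restriction $N>6$ and the constraint on $\lambda$ enter, and is the delicate heart of the argument.
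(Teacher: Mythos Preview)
Your overall strategy is correct and matches the paper's: both reduce to computing $\langle\tilde{\mathcal J}_a'(u),\partial_\varepsilon PV^{\varepsilon,y}\rangle$ (the paper writes this as $\partial_\varepsilon\tilde{\mathcal J}_a(PV^{\varepsilon,y}+\delta v)$ and expands $\partial_\varepsilon A$, $\partial_\varepsilon\tilde A$ directly, while you Taylor-expand in $\delta$ first), and both identify the strictly negative leading term $-c_0\varepsilon$ coming from $\partial_\varepsilon\int c(x)(PV^{\varepsilon,y})^2$.

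There is, however, a genuine gap in your remainder estimate. You write $\tilde{\mathcal J}_a'(u)=\tilde{\mathcal J}_a'(PV^{\varepsilon,y})+\delta\tilde{\mathcal J}_a''(PV^{\varepsilon,y})v+O(\delta^2)$ and then conclude $O(\delta^2/\varepsilon)=O(\varepsilon^{1+2\kappa'})=o(\varepsilon)$ for \emph{any} $\kappa'>0$. But for $N>6$ one has $2^*<3$, so the nonlinearity $t\mapsto a(x)|t|^{2^*-2}t$ is only $C^{1,2^*-2}$, not $C^{1,1}$: the Taylor remainder of $\tilde{\mathcal J}_a'$ is not $O(\delta^2)$ in $H^{-1}$. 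The paper handles this by keeping the exact error term $\int_B|\partial_\varepsilon PV^{\varepsilon,y}|\,|\delta v|^{2^*-1}\,\mathrm{d}x$, which after H\"older and $\delta\le 2C^+\varepsilon^{1+\kappa'}$ gives $O(\varepsilon^{2^*-2}\varepsilon^{\kappa'(2^*-1)})$; requiring this to be $o(\varepsilon)$ forces the specific constraint
\[
\kappa'>\frac{3-2^*}{2^*-1},
\]
which is strictly positive for $N>6$. This is precisely why the lemma only asserts the \emph{existence} of some $\kappa'\in(0,1)$, not that every $\kappa'$ works. Your argument as written would (incorrectly) allow arbitrarily small $\kappa'$; once you replace the $O(\delta^2)$ remainder by the correct $(2^*-1)$-H\"older remainder, you recover the paper's constraint and the proof goes through. (A secondary imprecision: your cross term is $O(\delta)$, not $O(\delta\varepsilon)$---the paper gets an $O(\delta)$ contribution from $\int_B(a(x)-a(y))\partial_\varepsilon PV^{\varepsilon,y}(PV^{\varepsilon,y})^{2^*-2}\delta v$---but this is harmless since $\delta=O(\varepsilon^{1+\kappa'})$.)
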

\begin{proof}
To begin, observe that
\begin{equation*}
\partial_t \tilde{\mathcal J}_a\left(\eta\left(t, PV^{\varepsilon, y}+\delta v\right)\right)_{\mid t=0}=\partial_{\varepsilon}  \tilde{\mathcal J}_a\left(PV^{\varepsilon, y}+\delta v\right)_{\mid \varepsilon=\varepsilon}.
\end{equation*}
For this proof, we will refer to the ball $B(y,d)$ simply as $B_d$.

\medskip

\noindent
As detailed in Lemma \ref{delta_dep}, we have
\begin{equation*}
\begin{aligned}
\partial_{\varepsilon} \tilde{\mathcal J}_a\left(u\right)= & \frac{1}{2}\left(N D+o(1)\right)^{(N-2) / 2}\left(a(y) N D+o(1)\right)^{(2-N) / 2} \partial_{\varepsilon} A \\
&\qquad-\frac{1}{2^*}\left(N D+o(1)\right)^{N / 2}\left(a(y) N D+o(1)\right)^{-N / 2} \partial_{\varepsilon} \tilde{A},
\end{aligned}
\end{equation*}
when $(\varepsilon, \delta) \rightarrow 0$, where $A$ and $\tilde{A}$ were defined in Lemma \ref{delta_dep} and $u:=PV^{\varepsilon, y}+\delta v$.\\
Remember that
\begin{align*}
    A&:= \int_{B}\left(|\nabla (P  V^{\varepsilon, y}+\delta v)|^2- c(x) (P  V^{\varepsilon, y}+\delta v)^2\right)\mathrm{~d}x.
\end{align*}
Expanding this, we get
\begin{equation*}
\begin{aligned}
\partial_{\varepsilon} A & =\underbrace{\partial_{\varepsilon}\int_{B}|\nabla (P  V^{\varepsilon, y})|^2 \mathrm{~d}x}_{A1}- \underbrace{\partial_{\varepsilon}\int_{B}c(x) (P  V^{\varepsilon, y})^2\mathrm{~d}x}_{A2}- 2\underbrace{\partial_{\varepsilon}\int_{B}c(x)P  V^{\varepsilon, y} \delta v \mathrm{~d}x}_{A3}.
\end{aligned}
\end{equation*}
Next, we will estimate each term in turn. 

\begin{align*}
      \int_{B}\left|\nabla P V^{\varepsilon,y}\right|^2 \mathrm{~d}x \; = \; \int_{B}(V^{\varepsilon,y})^{2^{*}-1}P V^{\varepsilon,y}\mathrm{~d}x =  \int_{B}(V^{\varepsilon,y})^{2^{*}}\mathrm{~d}x-\int_{B}(V^{\varepsilon,y})^{2^{*}-1}\varphi^{\varepsilon,y}\mathrm{~d}x.
    \end{align*}
Thus
 \begin{align*}
    A1 &:= \partial_{\varepsilon}\int_{B}|\nabla (P  V^{\varepsilon, y})|^2 \mathrm{~d}x=  \partial_{\varepsilon}\left[\int_{B}(V^{\varepsilon,y})^{2^{*}}\mathrm{~d}x-\int_{B}(V^{\varepsilon,y})^{2^{*}-1}\varphi^{\varepsilon,y}\mathrm{~d}x\right]\\
    &=  \partial_{\varepsilon}\left[\int_{\Rn}(V^{\varepsilon,y})^{2^{*}}\mathrm{~d}x-\int_{\Rn\setminus B}(V^{\varepsilon,y})^{2^{*}}\: \mathrm{~d}x-\int_{B}(V^{\varepsilon,y})^{2^{*}-1}\varphi^{\varepsilon,y}\mathrm{~d}x\right]\\
     &=-\partial_{\varepsilon}\int_{\Rn\setminus B}(V^{\varepsilon,y})^{2^{*}}\mathrm{~d}x-\partial_{\varepsilon}\int_{B}(V^{\varepsilon,y})^{2^{*}-1}\varphi^{\varepsilon,y}\mathrm{~d}x.
    \end{align*}
Moreover,
\begin{align*}
  \partial_{\varepsilon}\int_{\Rn\setminus B}(V^{\varepsilon,y})^{2^{*}} \mathrm{~d}x&= 2^* \int_{\Rn\setminus B}(V^{\varepsilon,y})^{2^{*}-1} \frac{\partial V^{\varepsilon, y}}{\partial \varepsilon} \mathrm{~d}x \\
  &\leq C(N) \int_{\Rn\setminus B_d}
\frac{\left[|\frac{x-y}{\varepsilon}|^2-1\right]}{\varepsilon^{N+1}\left[1+|\frac{x-y}{\varepsilon}|^2\right]^{N+1}} \mathrm{~d}x\\
&=O\left(\frac{\varepsilon^{N-1}}{d^N}\right).
\end{align*}
Next,
\begin{align*}
\partial_{\varepsilon}\int_{B}(V^{\varepsilon,y})^{2^{*}-1}\varphi^{\varepsilon,y} \mathrm{~d}x=  \underbrace{\int_{B}(V^{\varepsilon,y})^{2^{*}-1}\frac{\partial \varphi^{\varepsilon, y}}{\partial \varepsilon} \mathrm{~d}x}_{A1.2a}\; + \; (2^*-1)\underbrace{\int_{B}(V^{\varepsilon,y})^{2^{*}-2}\frac{\partial V^{\varepsilon, y}}{\partial \varepsilon}\varphi^{\varepsilon,y} \mathrm{~d}x.}_{A1.2b}
    \end{align*}
First, we can evaluate $A1.2a$ using Estimate \ref{ErrorPro} in the following manner:
\begin{align*}
    &\int_{B}(V^{\varepsilon,y})^{2^{*}-1}\frac{\partial \varphi^{\varepsilon, y}}{\partial \varepsilon} \mathrm{~d}x=\int_{B} (V^{\varepsilon,y})^{2^{*}-1} \left[\left(\frac{N-2}{2}\right)\varepsilon^{\frac{N-4}{2}} H(y, x)+\frac{\partial f^{\varepsilon,y}}{\partial \varepsilon}\right]\mathrm{~d}x \\
&= \frac{(N-2)}{2}\underbrace{\int_{B_d} (V^{\varepsilon,y})^{2^{*}-1} \varepsilon^{\frac{N-4}{2}} H(y,x)}_{a1} \mathrm{~d}x \; +\;\frac{(N-2)}{2}\underbrace{\int_{B \setminus B_d} (V^{\varepsilon,y})^{2^{*}-1} \varepsilon^{\frac{N-4}{2}} H(y, x) \mathrm{~d}x}_{a2}\\
&\hspace{10cm}+ \underbrace{\int_{B} (V^{\varepsilon,y})^{2^{*}-1}\frac{\partial f^{\varepsilon,y}}{\partial \varepsilon}\mathrm{~d}x.}_{a3}
\end{align*}
Utilizing Estimate \ref{ErrorPro} and \eqref{aubinCritminusone}, we find that
\begin{align*}
 a3:= \int_{B} (V^{\varepsilon,y})^{2^{*}-1}\frac{\partial f^{\varepsilon,y}}{\partial \varepsilon} \mathrm{~d}x \; =\; O\left(\frac{\varepsilon^{\frac{N}{2}}}{ d^{N}}\varepsilon^{\frac{N}{2}-1}\int_{\Rn} V^{2^*-1} \mathrm{~d}x \right) = O \left(\frac{\varepsilon^{N-1}}{d^{N}}\right).
\end{align*}
Furthermore, considering the integral over $B \setminus B_d$, we first note, based on harmonicity, that
\begin{equation}
H(y,x) \leq \max_{\overline{B}} H\left(y, \cdot\right)=\max _{\partial B} H\left(y, \cdot\right) \lesssim \frac{1}{d^{N-2}}.
\end{equation}
Using this inequality along with \eqref{aubinCritminusone}, we obtain
\begin{align*}
    a2:= \int_{B \setminus B_d} (V^{\varepsilon,y})^{2^{*}-1} \varepsilon^{\frac{N-4}{2}} H(y, x) \mathrm{~d}x&= O\left(\frac{\varepsilon^{\frac{N-4}{2}}}{d^{N-2}}\int_{\Rn\setminus B_d}(V^{\varepsilon,y})^{2^{*}-1} \mathrm{~d}x \right)=O\left(\frac{\varepsilon^{N-1}}{d^{N}}\right).
\end{align*}
Finally, we need to estimate
\begin{align*}
   a1 := &\int_{B_d} (V^{\varepsilon, y})^{2^* - 1} \varepsilon^{\frac{N-4}{2}} H(y, x) \, \mathrm{d}x \\
    &= \varepsilon^{\frac{N-4}{2}} H(y, y) \int_{B_d} (V^{\varepsilon, y})^{2^* - 1} \, \mathrm{d}x \; + \; \varepsilon^{\frac{N-4}{2}} \int_{B_d} \left[ H(y, x) - H(y, y) \right] (V^{\varepsilon, y})^{2^* - 1} \, \mathrm{d}x.
\end{align*}

Expanding $H(y, \cdot)$ in a Taylor series up to the third order around $y$, we have $H(y, x) = H(y, y) + H_1 + H_2 + H_3 + R$,
where $H_j$ denotes the $j$-th order term (e.g., $H_1 = \nabla H(y, y) \cdot (x - y)$). It can be seen that the integrals involving $H_j$ vanish: $H_1$ and $H_3$ are zero due to symmetry, and $H_2$ is zero due to harmonicity because the Hessian matrix, being symmetric, can be diagonalized orthogonally. Additionally, the remainder term $|R|$ is bounded by $\sup_{B_d} \left\|\nabla^4 H(y, x) \right\| |x - y|^4$. Using the explicit form of $H$, it is easy to verify that

$$
\sup _{B_d}\left\|\nabla^{4} H\left(y, \cdot\right)\right\|\leq \frac{C}{d^{N+2}},
$$
leading to the estimate

\begin{equation*}
 \varepsilon^{\frac{N-4}{2}} \left|\int_{B_d}(V^{\varepsilon,y})^{2^{*}-1}R \mathrm{~d}x\right| \leq \frac{C \varepsilon^{\frac{N-4}{2}}}{d^{N+2}} \int_{B_d}(V^{\varepsilon,y})^{2^{*}-1}\left|x-y\right|^{4}\;\mathrm{~d} x .
\end{equation*}
This is then evaluated using scaling arguments and spherical coordinates
\begin{equation*}
\begin{aligned}
\int_{B_d}(V^{\varepsilon,y})^{2^{*}-1}\left|x-y\right|^{4}\mathrm{~d}x & = C\varepsilon^{\frac{N}{2}+3}\int_{0}^{\frac{d}{\varepsilon}} \frac{r^{N+3}}{\left(1+r^{2}\right)^{\frac{N+2}{2}}}\;\mathrm{~d}r \leq C \varepsilon^{\frac{N}{2}+3} \left(\frac{d}{\varepsilon}\right)^2 \approx d^2 \varepsilon^{\frac{N}{2}+1} .
\end{aligned}
\end{equation*}
Therefore, using \eqref{aubinCritminusone}, we obtain
\begin{align*}
    a1&= \varepsilon^{\frac{N-4}{2}} H(y,y)\int_{B_d} (V^{\varepsilon,y})^{2^{*}-1} \mathrm{~d}x \; + \; \varepsilon^{\frac{N-4}{2}}\int_{B_d}\left[ H(y, x)- H(y, y)\right] (V^{\varepsilon,y})^{2^{*}-1}\mathrm{~d}x \\
    &=\varepsilon^{N-3}H(y,y)\int_{\Rn} V^{2^*-1} \mathrm{~d}x \;  + \; O\left(\frac{\varepsilon^{N-1}}{d^2}\right)+O\left(\frac{\varepsilon^{N-1}}{d^N}\right).
    \end{align*}
   Thus
   \begin{align*}
       A1.2a:= \frac{(N-2)}{2}\varepsilon^{N-3}H(y,y)\int_{\Rn} V^{2^*-1}\mathrm{~d}x \;  + \; O\left(\frac{\varepsilon^{N-1}}{d^2}\right)+O\left(\frac{\varepsilon^{N-1}}{d^N}\right).
   \end{align*}
   Next, applying Estimate \ref{ErrorPro}, $A1.2b$ becomes
   \begin{align*}
     A1.2b &:=\int_{B}(V^{\varepsilon,y})^{2^{*}-2}\frac{\partial 
     V^{\varepsilon, y}}{\partial \varepsilon}\varphi^{\varepsilon,y}\mathrm{~d}x \\
      &=\underbrace{\int_{B_d}(V^{\varepsilon,y})^{2^{*}-2}\frac{\partial V^{\varepsilon,y}}{\partial \varepsilon}\varepsilon^{\frac{N-2}{2}}H(y, x) \mathrm{~d}x}_{b1}+\underbrace{\int_{B\setminus B_d}(V^{\varepsilon,y})^{2^{*}-2}\frac{\partial V^{\varepsilon,y}}{\partial \varepsilon}\varepsilon^{\frac{N-2}{2}}H(y, x) \mathrm{~d}x}_{b2}\\
      &\qquad+\underbrace{\int_{B}(V^{\varepsilon,y})^{2^{*}-2}\frac{\partial V^{\varepsilon,y}}{\partial \varepsilon}f^{\varepsilon,y} \mathrm{~d}x.}_{b3}
      \end{align*}
First, we estimate
      \begin{align*}
      \int_{B}(V^{\varepsilon,y})^{2^{*}-2}\frac{\partial V^{\varepsilon,y}}{\partial \varepsilon} \mathrm{~d}x &= C(N)\int_{B}\frac{1}{\varepsilon^{\frac{N}{2}+2}}\frac{\left(\frac{|x-y|^2}{\varepsilon^2}-1\right)}{\left[1+\frac{|x-y|^2}{\varepsilon^2}\right]^{\frac{N}{2}+2}} \mathrm{~d}x\\
      &=C(N)\varepsilon^{\frac{N}{2}-2}\int_{\Rn}\frac{(|z|^2-1)}{(1+|z|^2)^{\frac{N}{2}+2}} \mathrm{~d}z+O\left(\varepsilon^{\frac{N}{2}-2}\int_{\frac{d}{\varepsilon}}^{\infty}\frac{(r^2-1)r^{N-1}}{(1+r^2)^{\frac{N}{2}+2}} \;\mathrm{~d}r\right)\\
      &=C(N)\varepsilon^{\frac{N}{2}-2}\int_{\Rn}\frac{(|z|^2-1)}{(1+|z|^2)^{\frac{N}{2}+2}} \mathrm{~d}z+ O\left(\frac{\varepsilon^{\frac{N}{2}}}{d^2}\right).
      \end{align*}
Consequently, using Estimate \ref{ErrorPro}, we have
\begin{align*}
 b3:= \int_{B}(V^{\varepsilon,y})^{2^{*}-2}\frac{\partial V^{\varepsilon,y}}{\partial \varepsilon}f^{\varepsilon,y} \mathrm{~d}x \; =\; O\left(\frac{\varepsilon^{\frac{N+2}{2}}}{ d^{N}}\int_{\Rn}(V^{\varepsilon,y})^{2^{*}-2}\frac{\partial V^{\varepsilon,y}}{\partial \varepsilon} \mathrm{~d}x \right) =O\left(\frac{\varepsilon^{N-1}}{d^N}\right).
\end{align*}
Employing methods analogous to those utilized for analyzing $a2$ and $a1$, we can similarly evaluate $b2$ and $b1$, leading to the following estimate
\noindent
\begin{align*}
    A1.2b=C(N)\varepsilon^{N-3}H(y,y)\int_{\Rn}\frac{(|z|^2-1)}{(1+|z|^2)^{\frac{N}{2}+2}} \mathrm{~d}z+O\left(\frac{\varepsilon^{N-1}}{d^2}\right)+O\left(\frac{\varepsilon^{N-1}}{d^N}\right).
\end{align*}
Combining all the derived estimates, we obtain
\begin{align*}
    A1&= - \frac{(N-2)}{2}\varepsilon^{N-3}H(y,y)\int_{\Rn} V^{2^*-1} \; {\rm d}z  -C(N)\varepsilon^{N-3}H(y,y)\int_{\Rn}\frac{(|z|^2-1)}{(1+|z|^2)^{\frac{N}{2}+2}} \; {\rm d}z \\&+O\left(\frac{\varepsilon^{N-1}}{d^2}\right)
   +O\left(\frac{\varepsilon^{N-1}}{d^N}\right).
\end{align*}

\medskip
\noindent
Turning to $A2$, applying \eqref{ConfTerm3} from Estimate \ref{ConfTerms}, we get
\begin{align*}
   A2:=\partial_{\varepsilon}\int_{B}c(x) (P  V^{\varepsilon, y})^2\mathrm{~d}x &= 2\int_{B}c(x) P  V^{\varepsilon, y}\frac{\partial PV^{\varepsilon,y}}{\partial \varepsilon} \mathrm{~d}x \\
    &=\frac{C(N)\tilde{\lambda}\varepsilon}{(1-|y|^2)^2}\int_{\Rn}\frac{(|z|^2-1)}{(1+|z|^2)^{N-1}} \mathrm{~d}z+ O\left(\varepsilon^{N-3}\right)+ \begin{cases} O(\varepsilon^2) \text{ if } N >5\\
O(\varepsilon^{2} |\log \varepsilon|) \text{ if } N =5.\end{cases}
\end{align*}

\medskip
\noindent
Finally, by estimating $A3$ as in Lemma \ref{delta_dep} by replacing $P V^{\varepsilon, y}$ by $\partial_{\varepsilon}P  V^{\varepsilon, y}$, we get
\begin{align*}
    A3&:= \partial_{\varepsilon}\int_{B}c(x)P  V^{\varepsilon, y} \delta v \mathrm{~d}x= \delta\begin{cases}
O\left(\varepsilon^{1/2}\right) & \text { if } N=5,\\
O\left(\varepsilon |\log \varepsilon|^{2/3}\right) & \text { if } N=6,\\
O\left(\varepsilon\right) & \text { if } N>6.
\end{cases}
\end{align*}
Combining all the necessary estimates, we obtain
\begin{align*}
 \partial_{\varepsilon} A &=-\frac{C(N)\tilde{\lambda}\varepsilon}{(1-|y|^2)^2}\int_{\Rn}\frac{(|z|^2-1)}{(1+|z|^2)^{N-1}} \mathrm{~d}z+ \begin{cases} O(\varepsilon^2) \text{ if } N >5\\
O(\varepsilon^{2} |\log \varepsilon|) \text{ if } N =5\end{cases}\hspace{-0.3cm}+ \delta\begin{cases}
O\left(\varepsilon^{1/2}\right) & \text { if } N=5,\\
O\left(\varepsilon |\log \varepsilon|^{2/3}\right) & \text { if } N=6,\\
O\left(\varepsilon\right) & \text { if } N>6.
\end{cases}
\end{align*}

\medskip
\noindent
Further, recall
\begin{align*}
    \tilde{A} & :=\int_{B} a(x) |P  V^{\varepsilon, y}+\delta v|^{2^*}  \mathrm{~d}x.
\end{align*}
Then 
\begin{align*}
    \partial_{\varepsilon} \tilde{A}&= 2^*\int_{B} a(x) |P  V^{\varepsilon, y}+\delta v|^{2^*-2}(P  V^{\varepsilon, y}+\delta v)\partial_{\varepsilon} P  V^{\varepsilon, y} \mathrm{~d}x\\
    & = 2^*\int_{B} a(x) \partial_{\varepsilon} PV^{\varepsilon, y}\left[(P  V^{\varepsilon, y})^{2^*-1}+ (2^*-1)(P  V^{\varepsilon, y})^{2^*-2}
\delta v\right]\mathrm{~d}x \\
&\hspace{1.3cm}+O\left(\int_{B}\left|\partial_{\varepsilon} P  V^{\varepsilon, y}\right|\left(P  V^{\varepsilon, y}\right)^{2^*-3}|\delta v|^{2}+\left|\partial_{\varepsilon} P  V^{\varepsilon, y}\right||\delta v|^{2^*-1}\mathrm{~d}x\right).
\end{align*}
Let's analyze each term individually.

\medskip
\noindent
For the following integral, we use the expansion $$(P  V^{\varepsilon, y})^{2^*-1}=(V^{\varepsilon, y})^{2^*-1}-(2^*-1)(V^{\varepsilon, y})^{2^*-2}\phi^{\varepsilon, y}+O\left((V^{\varepsilon, y})^{2^*-3}(\phi^{\varepsilon, y})^2\right), $$
and perform the calculations similar to those done in Estimate \ref{vTerm2} to obtain
\begin{align*}
   \int_{B} a(x)\partial_{\varepsilon} PV^{\varepsilon, y}(P  V^{\varepsilon, y})^{2^*-1}\mathrm{~d}x &= \int_{B} a(x)\partial_{\varepsilon} V^{\varepsilon, y}(P  V^{\varepsilon, y})^{2^*-1}\mathrm{~d}x\\
   &\qquad-\int_{B} a(x)\partial_{\varepsilon} \phi^{\varepsilon, y}(P  V^{\varepsilon, y})^{2^*-1}\mathrm{~d}x\\
   &=\int_{B} a(x)\partial_{\varepsilon} V^{\varepsilon, y}(V^{\varepsilon, y})^{2^*-1}\mathrm{~d}x \\
   &\qquad-(2^*-1)\int_{B} a(x)\partial_{\varepsilon} V^{\varepsilon, y}(V^{\varepsilon, y})^{2^*-2}\phi^{\varepsilon, y}\mathrm{~d}x\\
   &\qquad-\int_{B} a(x)\partial_{\varepsilon} \phi^{\varepsilon, y}(P  V^{\varepsilon, y})^{2^*-1}\mathrm{~d}x\\
    &\qquad+O\left(\int_{B} \left|\partial_{\varepsilon} V^{\varepsilon, y}\right|(V^{\varepsilon, y})^{2^*-3}(\phi^{\varepsilon, y})^2\right)\\
    &= -a(\bar{y})\int_{\Rn\setminus B}\partial_{\varepsilon} V^{\varepsilon, y}(V^{\varepsilon, y})^{2^*-1}\mathrm{~d}x\\
    &\qquad+\int_{B}(a(x)-a(\bar{y}))\partial_{\varepsilon} V^{\varepsilon, y}(V^{\varepsilon, y})^{2^*-1}\mathrm{~d}x\\
    &\qquad+O\left(\varepsilon^{\frac{N-4}{2}}\right)\\
    &= O(\varepsilon^{\theta-1})+O\left(\varepsilon^{\frac{N-4}{2}}\right).
\end{align*}
\medskip
\noindent
The next term of $\delta_{\varepsilon}\tilde{A}$ is given by
\begin{align*}
(2^*-1)\int_{B} a(x) \partial_{\varepsilon} PV^{\varepsilon, y} (P  V^{\varepsilon, y})^{2^*-2}\delta v\;{\rm d}x&=(2^*-1)\int_{B} a(y) \partial_{\varepsilon} PV^{\varepsilon, y} (P  V^{\varepsilon, y})^{2^*-2}\delta v\;{\rm d}x\\
&\quad+(2^*-1)\int_{B} (a(x)-a(y)) \partial_{\varepsilon} PV^{\varepsilon, y} (P  V^{\varepsilon, y})^{2^*-2}\delta v\;{\rm d}x.
\end{align*}
Using H\"older's inequality, the second term of the above integral can be estimated as
\begin{equation*}
   (2^*-1) \int_{B}(a(x)-a(y)) \partial_{\varepsilon} PV^{\varepsilon, y} (P  V^{\varepsilon, y})^{2^*-2}\delta v\;{\rm d}x=O(\delta).
\end{equation*}
For the first term, we have
\begin{align*}
&(2^*-1)\int_{B} a(y) \partial_{\varepsilon} PV^{\varepsilon, y} (P  V^{\varepsilon, y})^{2^*-2}\delta v\;{\rm d}x\\
&= (2^*-1)\delta a(y)  \left[\underbrace{\int_{B}\partial_{\varepsilon} V^{\varepsilon, y}(V^{\varepsilon, y})^{2^*-2}v\;{\rm d}x}_{B1}-\underbrace{\int_{B}\partial_{\varepsilon} \varphi^{\varepsilon, y}(V^{\varepsilon, y})^{2^*-2} v\;{\rm d}x}_{B2} \right. \\
&\left. \hspace{0.8cm}+O\left(\underbrace{\int_{B}\left|\partial_{\varepsilon} V^{\varepsilon, y}\right|\varphi^{\varepsilon, y}(V^{\varepsilon, y})^{2^*-3}|v|\;{\rm d}x}_{B3}\right)+O\left(\underbrace{\int_{B}\left|\partial_{\varepsilon}\varphi^{\varepsilon, y}\right| \varphi^{\varepsilon, y}(V^{\varepsilon, y})^{2^*-3}|v|\;{\rm d}x}_{B4} \right)\right].
\end{align*}
Next, we estimate the integrals in the above expression.\\
For $B1$, take $v \in H_0^1(B(0,1))$ as a test function in the following equation
\begin{equation}
\begin{array}{c}
-\Delta \partial_{\varepsilon}PV^{\varepsilon, y}= (2^*-1)(V^{\varepsilon, y})^{2^*-2}\partial_{\varepsilon}V^{\varepsilon, y}  \quad \text { on } B(0,1).
\end{array}
\end{equation}
Using \eqref{OrthCondn}, we obtain 
$B1\equiv 0$.
Next, we estimate $B2$ using H\"older's inequality, the Sobolev embedding theorem, and Estimate \ref{ErrorPro} to get
\begin{align*}
    B2&:=\int_{B}\partial_{\varepsilon} \varphi^{\varepsilon, y}(V^{\varepsilon, y})^{2^*-2} v\;{\rm d}x
    =O\left(\frac{\varepsilon^{\frac{N-4}{2}}}{ d^{\frac{(N-2)}{2}}}\right).
\end{align*}
We then estimate $B3$ using $\varphi^{\varepsilon,y}\leq V^{\varepsilon,y}$:
\begin{align*}
    B3&:= \int_{B}\left|\partial_{\varepsilon} V^{\varepsilon, y}\right|\varphi^{\varepsilon, y}(V^{\varepsilon, y})^{2^*-3}|v|\;{\rm d}x\\
    &=O\left[\left\|\varphi^{\varepsilon,y}\right\|_\infty \left(\int_{B}|v|^{2^*}\right)^{\frac{1}{2^*}}\left(\int_{B_d}|\partial_{\varepsilon} V^{\varepsilon, y}|^{\frac{2^*}{2^*-1}}|V^{\varepsilon, y}|^{\frac{(2^*-3)2^*}{2^*-1}}\;{\rm d}x\right)^{\frac{2^*-1}{2^*}}\right.\\
   & \hspace{3cm}+\left. \left(\int_{B}|v|^{2^*}\right)^{\frac{1}{2^*}}\left(\int_{B\setminus B_d}|\partial_{\varepsilon} V^{\varepsilon, y}|^{\frac{2^*}{2^*-1}}|V^{\varepsilon, y}|^{\frac{(2^*-2)2^*}{2^*-1}}\;{\rm d}x\right)^{\frac{2^*-1}{2^*}}\right].
\end{align*}
Using \eqref{phiLinf}, we can deduce the following estimate for $B3$
\begin{align*}
 B3 &= O\left(\frac{\varepsilon^{\frac{N}{2}}}{d^{\frac{N+2}{2}}}+\begin{cases} \frac{\varepsilon^{\frac{N}{2}}}{d^{\frac{N+2}{2}}}\text{ if } N >6\\
\frac{\varepsilon^{3}|\log \left(\frac{d}{\varepsilon}\right)|^{\frac{2}{3}}}{d^{4}} \text{ if } N =6\\
\frac{\varepsilon^{N-3}}{d^{N-2}} \text{ if } N <6
\end{cases} \right).
\end{align*}
Finally, $B4$ can be estimated similarly to $B2$:
\begin{align*}
B4:=\int_{B}\left|\partial_{\varepsilon}\varphi^{\varepsilon, y}\right| \varphi^{\varepsilon, y}(V^{\varepsilon, y})^{2^*-3}|v|\;{\rm d}x=O\left(\frac{\varepsilon^{\frac{N-4}{2}}}{ d^{\frac{(N-2)}{2}}}\right).
\end{align*}
\noindent
Using similar techniques as in the previous estimates, we can evaluate the next term in $\partial_{\varepsilon}\tilde{A}$
  \begin{align*}
\int_{B} a(x)  \left|\partial_{\varepsilon} PV^{\varepsilon, y}\right| \left(P  V^{\varepsilon, y}\right)^{2^*-3}|\delta v|^{2}\;{\rm d}x&=O\left(\delta^2 \varepsilon^{-1}\right).
\end{align*}
We can use analogous methods to estimate the remaining term, yielding
\begin{align*}
   \int_{B} \left|\partial_{\varepsilon} P  V^{\varepsilon, y}\right||\delta v|^{2^*-1}\mathrm{~d}x=O(\varepsilon^{2^*-2} \varepsilon^{\kappa^\prime(2^*-1)}),
\end{align*}
where $\kappa^\prime$ is chosen such that $\frac{3-2^*}{2^*-1}< \kappa^\prime <1$.

\noindent
Therefore, by combining all these estimates and using the condition $\delta < 2 C^{+} \varepsilon^{1+\kappa^\prime}$, we conclude that
\begin{align*}
\partial_{\varepsilon} \tilde{J}\left(P  V^{\varepsilon, y}+\delta v\right)&=-\frac{C(N)\tilde{\lambda}a(y)^{\frac{2-N}{2}}\varepsilon}{(1-|y|^2)^2}\int_{\Rn}\frac{(|z|^2-1)}{(1+|z|^2)^{N-1}} \mathrm{~d}z+O(\varepsilon^2)+O(\varepsilon^{\theta-1})+O(\varepsilon^{\frac{N}{2}})\\
& \hspace{5cm}+O(\varepsilon^{2^*-2} \varepsilon^{\kappa^\prime(2^*-1)})+O(\delta)+O(\delta^2 \varepsilon^{-1})
\end{align*}
and the result follows when $\delta$ and $\varepsilon$ are sufficiently small.
\end{proof}
\noindent
We can now state the key proposition of this section:
\begin{proposition}\label{main-prop}
Let $N>6$. There exists $\rho>0$ such that for every $\gamma \in \Gamma$, there is a corresponding $\tilde{\gamma} \in \Gamma$ that satisfies
\begin{equation*}
\tilde{\gamma}\left(B(0,R_2)^\complement\right) \cap \mathcal{I}_\rho=\emptyset \quad \text { and } \quad \max _{y \in B(0,R_2)^\complement} \mathcal{J}_a(\tilde{\gamma}(y)) \leq \max \left(d_\lambda, \max _{y \in B(0,R_2)^\complement} \mathcal{J}_a(\gamma(t))\right) .
\end{equation*}
\end{proposition}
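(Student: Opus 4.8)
\emph{Strategy and set-up.} The statement is a deformation lemma in the style of Bahri's theory of critical points at infinity, modelled on \cite{BE}: Lemmas \ref{energyMain}, \ref{delta_dep}, \ref{epsiChan}, \ref{defo} supply the infinitesimal information, and the task is to assemble them into a global, energy--non-increasing deformation that pushes an arbitrary $\gamma\in\Gamma$ off $\mathcal I_\rho$. First fix the parameters. Choose $\varepsilon_1^+$ and the associated constant $\tau$ of Lemma \ref{energyMain}, pick the compact neighbourhood $A_*$ of $A^*$ inside $\{\operatorname{dist}(\cdot,A^*)<\tau\}$, and choose $\kappa=\kappa'\in(0,1)$ compatibly with Lemmas \ref{delta_dep}--\ref{epsiChan}, obtaining $C^+,\varepsilon_2^+,\varepsilon_3^+,\delta_2^+,\delta_3^+$. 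Then fix $\rho>0$ with $6\rho<\min\{\nu,\varepsilon_1^+,\varepsilon_2^+,\varepsilon_3^+,\tau\}$, $3\rho<\min\{\delta_2^+,\delta_3^+\}$, and $\{\operatorname{dist}(\cdot,A^*)\le 3\rho\}\subset\operatorname{int}A_*$. Two consequences: (i) for $u=cPV^{\varepsilon,y}+\delta v\in\mathcal I_{3\rho}$ one has $y\in A_*$, so $\bar y=y$ and the hypothesis ``$y-\bar y=\varepsilon\zeta$'' of Lemmas \ref{delta_dep}--\ref{epsiChan} holds automatically, while by Lemma \ref{energyMain}, the degree-zero homogeneity of $\tilde{\mathcal J}_a$, and continuity, $\tilde{\mathcal J}_a(cPV^{\varepsilon,y}+\delta v)<d_\lambda$ whenever $\varepsilon<\varepsilon_1^+$ and $\delta$ is small; (ii) shrinking $\rho$ once more, $\mathcal I_{3\rho}$ is disjoint from the rigid part $\{Q(\mathcal U_{-y}):d(y,0)<R'\text{ or }>\tilde R\}$ of every $\gamma\in\Gamma$, since elements of $\mathcal I_{3\rho}$ concentrate at interior points near $A^*\Subset B(0,1)$, whereas the hyperbolic bubbles $Q(\mathcal U_{-y})$ do not concentrate for $d(y,0)$ bounded and concentrate at $\partial B(0,1)$ for $d(y,0)$ large. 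Hence $\gamma^{-1}(\mathcal I_{3\rho})\subset\{R'\le d(y,0)\le\tilde R\}$, so all surgery takes place where $\gamma$ is not prescribed.

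\emph{The deformation vector field.} On a neighbourhood of $\overline{\mathcal I_{3\rho}}$ in $\mathcal V$ define a locally Lipschitz $W$: in $\{\delta\ge C^+\varepsilon^{1+\kappa}\}$ put $W(u)=-v(u)$, which holds $c,\varepsilon,y$ fixed, sends $\delta$ down at unit rate, and strictly decreases $\tilde{\mathcal J}_a$ by Lemma \ref{delta_dep}; in $\{\delta\le 2C^+\varepsilon^{1+\kappa'}\}$ put $W=\Upsilon$, whose flow $\eta$ (Lemma \ref{defo}) fixes $c,\delta$, moves $y$ only by $O(\delta)$, raises $\varepsilon$ at rate $1+O(\delta)$, and decreases $\tilde{\mathcal J}_a$ by Lemma \ref{epsiChan}; on the overlap both choices decrease $\tilde{\mathcal J}_a$, so glue by a partition of unity, and let $\Psi_s$ be the resulting flow. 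Starting from any $u_0\in\mathcal I_{3\rho}$: within time $\le 3\rho$ the coordinate $\delta$ reaches the curve $\delta\simeq C^+\varepsilon^{1+\kappa}$ with $c,\varepsilon,y$ unchanged, after which $\varepsilon$ increases through $\rho$ --- so $\Psi_s(u_0)$ leaves $\mathcal I_\rho$ and, $\varepsilon$ being non-decreasing, cannot return --- and one stops at some $\varepsilon\in(\rho,\varepsilon_1^+)$ within total time $T_0=5\rho$; throughout, $\tilde{\mathcal J}_a$ is non-increasing, $y$ stays in $A_*$, and $\varepsilon<\varepsilon_1^+$, $\delta<\delta_3^+$, so once $\delta$ is small one has $\tilde{\mathcal J}_a(\Psi_s(u_0))<d_\lambda$, and $\Psi_s(u_0)\in\mathcal V$ for $0\le s\le T_0$.

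\emph{Surgery and conclusion.} Given $\gamma\in\Gamma$, consider a maximal ``excursion'' of $\gamma$ into $\mathcal I_\rho$ --- a relatively open set on which $\gamma\in\mathcal I_\rho$, with boundary values $\gamma(y_1),\gamma(y_2)\in\partial\mathcal I_\rho$; along it the continuous concentration point $y(\gamma(\cdot))$ traces a curve inside $\{\operatorname{dist}(\cdot,A^*)<\rho\}\subset A_*$ joining $y_1$ to $y_2$. Replace the excursion by: flowing it out by $\Psi$ (with continuously varying times, equal to $0$ at $y_1,y_2$) until it reaches $\{\varepsilon\in(\rho,\varepsilon_1^+)\}$; then letting $\delta\to 0$ at fixed $(c,\varepsilon,y)$, which is harmless since one is already outside $\mathcal I_\rho$ with $\tilde{\mathcal J}_a<d_\lambda$ by Lemma \ref{energyMain} and continuity; then interpolating between the two endpoints inside the finite-dimensional set $\{cPV^{\varepsilon,y}:c>0,\ \varepsilon\in(\rho,\varepsilon_1^+),\ y\in A_*\}$, on which $\tilde{\mathcal J}_a\equiv\tilde{\mathcal J}_a(PV^{\varepsilon,y})<d_\lambda$, using a thickening of the above curve for the $y$-coordinate and the connectedness of $(0,\infty)$ and $(\rho,\varepsilon_1^+)$ for the $c$- and $\varepsilon$-coordinates; finally composing with $Q$, which preserves $\tilde{\mathcal J}_a$ and $\varepsilon$. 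Performing this simultaneously over all excursions, continuously in $y\in B(0,R_2)^\complement$, yields $\tilde\gamma\in\Gamma$ --- it agrees with $\gamma$ on the rigid part --- with $\tilde\gamma(B(0,R_2)^\complement)\cap\mathcal I_\rho=\emptyset$ and, since every deformation step leaves $\tilde{\mathcal J}_a$ non-increasing and every inserted arc has energy $<d_\lambda$, $\mathcal J_a(\tilde\gamma(y))\le\max\bigl(d_\lambda,\ \max_{z}\mathcal J_a(\gamma(z))\bigr)$ for all $y$, which is the claim.

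\emph{Expected main obstacle.} The energy inequalities are exactly Lemmas \ref{energyMain}--\ref{defo}; the delicate part is the bookkeeping: building $W$ together with the varying flow-times so that the deformation exits $\mathcal I_\rho$ in \emph{uniformly} bounded time without $\varepsilon$ overshooting $\varepsilon_1^+$ and without the flow re-entering $\mathcal I_\rho$, and then patching the per-excursion detours into a single continuous $\tilde\gamma\in\Gamma$ over the whole $N$-dimensional domain --- this is the standard but technical cutoff and closing construction of \cite{BE} (cf.\ \cite[\S5]{Sm}), whose only analytic input is the four lemmas above.
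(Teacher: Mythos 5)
Your proposal is correct and follows essentially the same route as the paper, which itself disposes of this proposition in one line by invoking Lemmas \ref{energyMain}, \ref{delta_dep}, \ref{epsiChan}, and \ref{defo} together with the deformation scheme of \cite[Proposition~5.9]{Sm}. Your write-up simply makes explicit the same three-stage surgery (push $\delta$ down, flow $\varepsilon$ up past $\rho$, close the path in $\{\delta=0\}$ below the level $d_\lambda$) that the paper sketches in its ``strategy of the proof'' subsection and then delegates to \cite{Sm}.
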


\medskip

\begin{proof}
Standard arguments of the deformation type lead to the proof. Indeed, the proof's essential phases depend on earlier lemmas. Together with Lemma~\ref{epsiChan}, Lemma~\ref{delta_dep}, Lemma~\ref{energyMain}, and Lemma~\ref{defo}, we adhere to the procedures of \cite[Proposition~5.9]{Sm} to obtain the desired proposition.
\end{proof}

%%%%%%%%%%%%%%%%%%%%%%%%%%%%%%%%%%%%%%%%%%%%%%%%%%%%%%%%%%%%

\section{Appendix}  \label{appen}
For the proof of the following estimate, refer to \cite[Proposition 1]{ReyO}. In the following estimates, we will refer to $B(0,1)$ simply as $B$.

\begin{estimate} \label{ErrorPro}
Let $(y, \varepsilon) \in B \times \mathbf{R}_{+}$ and define $\varphi^{\varepsilon,y} = V^{\varepsilon,y} - P V^{\varepsilon,y}$. Then, we have the following:
\begin{enumerate}
\item \begin{equation*}
0 \leqslant \varphi^{\varepsilon,y} \leqslant V^{\varepsilon,y}.
\end{equation*}
\item
\begin{equation*}
\varphi^{\varepsilon,y}=\varepsilon^{\frac{N-2}{2}}H(y, \cdot)+f^{\varepsilon,y},
\end{equation*}
\noindent
where $f^{\varepsilon,y}$ satisfies the uniform estimates:
\begin{equation*}
\begin{gathered}
f^{\varepsilon,y}=O\left(\frac{\varepsilon^{\frac{N+2}{2}}}{ d^N}\right), \quad \frac{\partial f^{\varepsilon,y}}{\partial \varepsilon}=O\left(\frac{\varepsilon^{\frac{N}{2}}}{ d^N}\right), \quad \frac{\partial f^{\varepsilon,y}}{\partial y_i}=O\left(\frac{\varepsilon^{\frac{N+2}{2}}}{ d^{N+1}}\right).
\end{gathered}
\end{equation*}
\item
\begin{equation*}
\begin{gathered}
\left|\varphi^{\varepsilon,y}\right|_{L^{2^*}(B)}=O\left(\frac{\varepsilon^{\frac{N-2}{2}}}{d^{\frac{N-2}{2}}}\right),\\
\left|\frac{\partial \varphi^{\varepsilon,y}}{\partial \varepsilon}\right|_{L^{2^*}(B)}=O\left(\frac{\varepsilon^{\frac{N-4}{2}}}{ d^{\frac{(N-2)}{2}}}\right), \quad\left| \frac{\partial \varphi^{\varepsilon,y}}{\partial y_i}\right|_{L^{2^*}(B)}=O\left(\frac{\varepsilon^{\frac{N-2}{2}}}{ d^{\frac{N}{2}}}\right),
\end{gathered}
\end{equation*}
where $d=d(y, \partial B)$ is the distance between $y$ and the boundary of $B$.
\end{enumerate}
\end{estimate}
\begin{estimate}
$\int_{B}\left|\nabla P V^{\varepsilon,y}\right|^2 \; {\rm d}x = \int_{\Rn}V^{2^*} \; {\rm d}x +O\left(\left(\frac{\varepsilon}{d}\right)^{N-2}\right)$. \label{proGrad}
\end{estimate}
\begin{proof}
    \begin{align*}
      \int_{B}\left|\nabla P V^{\varepsilon,y}\right|^2 \; {\rm d}x =   \int_{B}(V^{\varepsilon,y})^{2^{*}-1}P V^{\varepsilon,y} \mathrm{~d}x =  \underbrace{\int_{B}(V^{\varepsilon,y})^{2^{*}}\mathrm{~d}x}_{A1}-\underbrace{\int_{B}(V^{\varepsilon,y})^{2^{*}-1}\varphi^{\varepsilon,y}\mathrm{~d}x}_{A2}.
    \end{align*}
   Now, we estimate $A1$ and $A2$ as follows:
     \begin{align*}
         A1:= \int_{B}(V^{\varepsilon,y})^{2^{*}} \; {\rm d}x&= \int_{\Rn}(V^{\varepsilon,y})^{2^{*}}\mathrm{~d}x-\int_{\Rn \setminus B}(V^{\varepsilon,y})^{2^{*}}\mathrm{~d}x.
         \end{align*}
     Then 
         \begin{align*}
      \int_{\Rn \setminus B}(V^{\varepsilon,y})^{2^{*}}\mathrm{~d}x\;& \leq  \int_{\Rn \setminus B(y,d)}(V^{\varepsilon,y})^{2^{*}}\mathrm{~d}x\\
      &= \int_{\Rn \setminus B(y,d)} \[\frac{\varepsilon^{\frac{N-2}{2}}}{\[\varepsilon^2+|x-y|^2\]^{\frac{N-2}{2}}}\]^\frac{2N}{N-2} \mathrm{~d}x
         &= \int_{\frac{d}{\varepsilon}}^{\infty}\frac{r^{N-1}}{(1+r^2)^{N}}\mathrm{~d}r & = O\left(\left(\frac{\varepsilon}{d}\right)^N\right).
         \end{align*}
Therefore,
         \begin{align}
&\int_{B}(V^{\varepsilon,y})^{2^{*}} \; {\rm d}x=\int_{\Rn}V^{2^{*}}\mathrm{~d}x+O\left(\left(\frac{\varepsilon}{d}\right)^N\right). \label{AubinCritPower}
     \end{align}
     Now, for $A2$
     \begin{align*}
         A2:= &\int_{B}(V^{\varepsilon,y})^{2^{*}-1}\varphi^{\varepsilon,y}\mathrm{~d}x\\
         &\leq \|\varphi^{\varepsilon,y}\|_{\infty}\int_{B}\frac{1}{\varepsilon^{\frac{N+2}{2}}\[1+\[\frac{|x-y|}{\varepsilon}\]^2\]^{\frac{N+2}{2}}}\mathrm{~d}x\\
         &\leq \frac{\varepsilon^{\frac{N}{2}-1}}{d^{N-2}}\varepsilon^{\frac{N}{2}-1}\int_{0}^{\infty}\frac{r^{N-1}}{(1+r^2)^{\frac{N+2}{2}}}\mathrm{~d}r=O\left(\left(\frac{\varepsilon}{d}\right)^{N-2}\right).
     \end{align*}
Hence, the proof is complete.
\end{proof}
\noindent
Additionally, we also have the following estimate
\begin{equation}
   \int_{B}\left|\nabla V^{\varepsilon,y}\right|^2 \; {\rm d}x = \int_{\Rn}V^{2^*} \; {\rm d}x+O\left(\left(\frac{\varepsilon}{d}\right)^{N-2}\right).\label{AubGrad} 
\end{equation}
The following estimates concern integrals involving the projection operator and its derivatives.
\begin{estimate} \label{Proj&ProjDer}
    \begin{align}
& \int_{B} \nabla P V^{\varepsilon, y} \nabla \frac{\partial P V^{\varepsilon, y}}{\partial y_{j}} \; {\rm d}x= -\varepsilon^{N-2}\frac{\partial H}{\partial y_{j}}(y, y) \int_{\Rn} V^{2^*-1} \; {\rm d}x+O\left(\frac{\varepsilon^{N}}{d^2}\right)+O\left(\frac{\varepsilon^{N}}{d^{N+1}}\right).\label{ProDely}\\
& \int_{B} \nabla P V^{\varepsilon, y}\nabla \frac{\partial P V^{\varepsilon, y}}{\partial \varepsilon}\; {\rm d}x=-\varepsilon^{N-3} H(y, y) \int_{\Rn} V^{2^*-1} \; {\rm d}x+O\left(\frac{\varepsilon^{N-1}}{d^2}\right)+O\left(\frac{\varepsilon^{N-1}}{d^{N}}\right).\label{ProjDelEp}\\
& \int_{B}\left|\nabla \frac{\partial P V^{\varepsilon, y}}{\partial y_{i}}\right|^{2} \; {\rm d}x= \frac{C}{\varepsilon^{2}}+C\varepsilon^{N-3}\frac{\partial H}{\partial y_{j}}(y, y)+O\left(\frac{\varepsilon^N}{d^3}\right)+O\left( \frac{\varepsilon^{N-2}}{d^N}\right)+O\left(\frac{\varepsilon^N}{d^{N+2}}\right)+O\left(\frac{\varepsilon^{N-1}}{d^{N+1}}\right).\label{ProjDely2}  \\
& \int_{B}\left|\nabla \frac{\partial P V^{\varepsilon, y}}{\partial \varepsilon}\right|^{2}\; {\rm d}x=\frac{C}{\varepsilon^2}+CH(y,y)\frac{\varepsilon^{N-2}}{d^2}+O\left(\frac{\varepsilon^{N-2}}{d^N}\right)+O\left(\varepsilon^{N-4}\right).\label{ProjdelEp2}\\
& \int_{B} \nabla \frac{\partial P V^{\varepsilon, y}}{\partial \varepsilon} \nabla \frac{\partial P V^{\varepsilon, y}}{\partial y_{i}} \; {\rm d}x=O\left(\frac{\varepsilon^{N-3}}{d^{N-1}}\right). \label{interterm1}\\
& \int_{B} \nabla \frac{\partial P V^{\varepsilon, y}}{\partial y_{j}} \nabla \frac{\partial P V^{\varepsilon, y}}{\partial y_{i}}\; {\rm d}x=O\left(\frac{\varepsilon^{N-2} }{d^{N}}\right), \quad i \neq j. \label{intermterm2}
\end{align}
\end{estimate}
\begin{proof}
The derivations for these estimates are detailed in (B.4) to (B.9) of \cite{ReyO}. 
\end{proof}
\noindent
However, we will utilize the following estimates that arise in the proof of these results, and hence, will cite them separately.
\begin{equation}
\begin{aligned}
\int_{B(y,d)} (V^{\varepsilon,y})^{2^{*}-1}\; {\rm d}x & = C(N) \int_{B(y,d)}\frac{\; {\rm d}x}{\varepsilon^{\frac{N+2}{2}}\[1+\[\frac{|x-y|}{\varepsilon}\]^2\]^{\frac{N+2}{2}}}= \int_{\Rn}  -\int_{\Rn \setminus B(y,d)}   \\
& =\varepsilon^{\frac{N}{2}-1}\int_{\Rn} V^{2^*-1} \; {\rm d}x+O\left(\varepsilon^{\frac{N-2}{2}} \int_{\frac{d}{\varepsilon}}^{\infty} \frac{r^{N-1} \mathrm{~d}r}{\left(1+r^{2}\right)^{(N+2) / 2}}\right) \\
& =\varepsilon^{\frac{N}{2}-1}\int_{\Rn} V^{2^*-1}\; {\rm d}x+O\left(\frac{\varepsilon^{\frac{N+2}{2}}}{ d^{2}}\right).
\end{aligned}\label{aubinCritminusone}
\end{equation}
\noindent  
The $L^{\infty}$ estimate for $\varphi^{\varepsilon,y}$ on $B$, using the maximum principle, is given by
\begin{align*}
  |\varphi^{\varepsilon,y}(x)|\leq \max_{\overline{B}}\varphi^{\varepsilon,y}(x)= \max_{\partial B}\frac{\varepsilon^{\frac{N-2}{2}}}{\left[\varepsilon^2+|x-y|^2\right]^{\frac{N-2}{2}}}\leq \frac{\varepsilon^{\frac{N-2}{2}}}{d^{N-2}},
\end{align*}
thus,
\begin{equation}
\left\|\varphi^{\varepsilon,y}\right\|_{\infty}\leq  \frac{\varepsilon^{\frac{N-2}{2}}}{d^{N-2}}. \label{phiLinf}
\end{equation}
Similarly, for the first partial derivatives,
\begin{align*}
\left|\frac{\partial \varphi^{\varepsilon,y}}{\partial y_{j}}(x)\right|\leq \max_{\overline{B}}\frac{\partial \varphi^{\varepsilon,y}}{\partial y_{j}}(x)\leq  \max_{\partial B}\frac{\varepsilon^{\frac{N-2}{2}}|x-y|}{\left[\varepsilon^2+|x-y|^2\right]^{\frac{N}{2}}}\leq \frac{\varepsilon^{\frac{N-2}{2}}}{d^{N-1}},    \end{align*}
so,
$$\left\|\frac{\partial \varphi^{\varepsilon,y}}{\partial y_{j}}\right\|_{\infty}\leq \frac{\varepsilon^{\frac{N-2}{2}}}{d^{N-1}}.$$
Similarly,
\begin{equation*}
\left\|\frac{\partial \varphi^{\varepsilon,y}}{\partial \varepsilon}\right\|_{\infty}\leq \frac{\varepsilon^{\frac{N-4}{2}}}{d^{N-2}}. \label{PhiDelEpsLinf}
\end{equation*}

\medskip
We now compute integrals involving the second derivative of the projection operator with respect to the parameters $\varepsilon$ and $y$.
\medskip
\begin{estimate} \label{DoubleDer}
 \begin{align}
& \left\|\frac{\partial^{2} P V^{\varepsilon,y}}{\partial y_{i} \partial \varepsilon}\right\|_{H_{0}^{1}(B)}=O\left(\frac{1}{\varepsilon^2}\right).  \label{ProjSeconDerEpsY}\\
& \left\|\frac{\partial^{2} P V^{\varepsilon,y}}{\partial y_{i} \partial y_{j}}\right\|_{H_{0}^{1}(B)}=O\left(\frac{1}{\varepsilon^{2}}\right). \label{ProjSeconDerY}\\
& \left\|\frac{\partial^{2} P V^{\varepsilon,y}}{\partial \varepsilon^{2}}\right\|_{H_{0}^{1}(B)}=O\left(\frac{1}{\varepsilon^{2}}\right).\label{ProjSeconDerEps}
\end{align}
   \end{estimate}
\begin{proof}
Refer to (B.19), (B.20), (B.21) of \cite{ReyO}.
\end{proof}
\noindent
Next, we turn to the estimates of the integrals involving the conformal factor.
\begin{estimate}\label{ConfTerms}For $N > 4$, the following estimates hold:
 \begin{align}
        &\int_{B} \frac{\left|P V^{\varepsilon,y}\right|^{2}}{\left(1-|x|^{2}\right)^{2}}\mathrm{~d}x=\frac{C(N)\varepsilon^2}{\left(1-|y|^2\right)^2} \int_{\mathbb{R}^{N}} \frac{1}{\left(1+|z|^{2}\right)^{N-2}} \mathrm{~d}z +O(\varepsilon^{N-2})+\begin{cases} O(\varepsilon^3) \text{ if } N >5\\
O(\varepsilon^3 |\log \varepsilon|) \text{ if } N =5.
\end{cases}\label{ConfTerm1}\\
& \int_{B} \frac{\left|\partial_\varepsilon P V^{\varepsilon,y}\right|^{2}}{\left(1-|x|^{2}\right)^{2}}\mathrm{~d}x = \frac{C(N)}{\left(1-|y|^2\right)^2}\int_{\Rn} \frac{(|z|^2-1)^2}{\left(1+|z|^{2}\right)^{N}}\mathrm{~d}z + O(\varepsilon^{N-4})+\begin{cases} O(\varepsilon) \text{ if } N >5\\
O(\varepsilon |\log \varepsilon|) \text{ if } N =5.
\end{cases}\label{ConfTerm2}\\
&\int_{B} \frac{P V^{\varepsilon,y}\partial_\varepsilon P V^{\varepsilon,y}}{\left(1-|x|^{2}\right)^{2}}\mathrm{~d}x=  \frac{C(N)\varepsilon}{\left(1-|y|^2\right)^2}\int_{\Rn} \frac{|z|^2-1}{(1+|z|^2)^{N-1}}\mathrm{~d}z+ O\left(\varepsilon^{N-3}\right)+\begin{cases} O(\varepsilon^2) \text{ if } N >5\\
O(\varepsilon^2 |\log \varepsilon|) \text{ if } N =5.
\end{cases}
\label{ConfTerm3}
    \end{align}
\end{estimate}
\begin{proof}
We begin by proving \eqref{ConfTerm1} as follows
    \begin{align}
       \int_{B} \frac{\left|P V^{\varepsilon,y}\right|^{2}}{\left(1-|x|^{2}\right)^{2}}\mathrm{~d}x= \underbrace{\int_{B\left(y,\frac{d}{4}\right)}}_{I1}+\underbrace{\int_{B\setminus B\left(y,\frac{d}{4}\right)}}_{I2}.
    \end{align}
We first consider the integral $I1$
\begin{equation*}
    \begin{aligned}
     I1= \int_{B\left(y,\frac{d}{4}\right)} \frac{\left|P V^{\varepsilon,y}\right|^{2}}{\left(1-|x|^{2}\right)^{2}}\mathrm{~d}x =   \underbrace{\int_{B\left(y,\frac{d}{4}\right)} \frac{\left(V^{\varepsilon,y}\right)^{2}}{\left(1-|x|^{2}\right)^{2}}\mathrm{~d}x}_{I1.a}+\underbrace{\int_{B\left(y,\frac{d}{4}\right)} \frac{\left(\varphi^{\varepsilon,y}\right)^{2}}{\left(1-|x|^{2}\right)^{2}}\mathrm{~d}x}_{I1.b}-\underbrace{2\int_{B\left(y,\frac{d}{4}\right)} \frac{V^{\varepsilon,y} \varphi^{\varepsilon,y}}{\left(1-|x|^{2}\right)^{2}}\mathrm{~d}x}_{I1.c}.
    \end{aligned}
\end{equation*}
Let's first address $I1.a$
$$
\begin{aligned}
I1.a:=\int_{B\left(y,\frac{d}{4}\right)} \frac{\left(V^{\varepsilon,y}\right)^{2}}{\left(1-|x|^{2}\right)^{2}}\; {\rm d}x= &\;C(N) \int_{B\left(y,\frac{d}{4}\right)} \frac{\varepsilon^{N-2}}{\left(\varepsilon^{2}+\left|x-y\right|^{2}\right)^{N-2}\left(1-|x|^{2}\right)^{2}} \mathrm{~d}x\\
& =C(N) \varepsilon^{2} \int_{|z| < \frac{d}{4\varepsilon}} \frac{1}{\left(1+|z|^{2}\right)^{N-2}\left(1-\left|z \varepsilon+y\right|^{2}\right)^{2}} \mathrm{~d}z.
\end{aligned}
$$
Next, consider the following integral
\begin{align*}
& \int_{|z| < \frac{d}{4\varepsilon}} \frac{1}{\left(1+|z|^{2}\right)^{N-2}}\left[\frac{1}{\left(1-\left|z \varepsilon+y\right|^{2}\right)^{2}}-\frac{1}{\left(1-\left|y\right|^{2}\right)^{2}}\right] \mathrm{~d}z \\
& =\int_{|z| < \frac{d}{4\varepsilon}} \frac{1}{\left(1+|z|^{2}\right)^{N-2}}\left[\frac{\left(1-\left|y\right|^{2}\right)^{2}-\left(1-\left|z \varepsilon+y\right|^{2}\right)^{2}}{\left(1-\left|z \varepsilon+y\right|^{2}\right)^{2}\left(1-|y|^{2}\right)^{2}}\right] \mathrm{~d}z \\
& =\int_{|z| < \frac{d}{4\varepsilon}}\left[\frac{\left(\varepsilon^{2}|z|^{2}+2 z \varepsilon \cdot y\right)\left(1-\left|y\right|^{2}\right)+\left(1-\left|z \varepsilon+y\right|^{2}\right)\left(\varepsilon^{2}|z|^{2}+2 z \varepsilon \cdot y\right)}{\left(1+|z|^{2}\right)^{N-2}\left(1-\left|z \varepsilon+y\right|^{2}\right)^{2}\left(1-\left|y\right|^{2}\right)^{2}}\right]\mathrm{~d}z  \\
& =\int_{|z| < \frac{d}{4\varepsilon}} \frac{1}{\left(1+|z|^{2}\right)^{N-2}} \frac{\left(\varepsilon^{2}|z|^{2}+2 z \varepsilon \cdot y\right)}{\left(1-\left|z \varepsilon+y\right|^{2}\right)^{2}\left(1-\left|y\right|^{2}\right)}\mathrm{~d}z  \\
& \hspace{3cm}+\int_{|z| < \frac{d}{4\varepsilon}} \frac{1}{\left(1+|z|^{2}\right)^{N-2}} \frac{(\varepsilon^{2}|z|^{2}+2 z \varepsilon \cdot y)}{\left(1-\left|z \varepsilon+y\right|^{2}\right)\left(1-\left|y\right|^{2}\right)^{2}}\mathrm{~d}z  \\
& \leq \frac{c \varepsilon^{2}}{\left(1-\left|y\right|^{2}\right)^2\left(1-\frac{d^{2}}{16}-\frac{d}{2}|y|-\left|y\right|^{2}\right)^{2}} \int_{|z| < \frac{d}{4\varepsilon}} \frac{|z|^2}{\left(1+|z|^{2}\right)^{N-2}}\mathrm{~d}z \\
& \hspace{3cm} + \frac{c\varepsilon\left|y\right|}{\left(1-\left|y\right|^{2}\right)^2\left(1-\frac{d^{2}}{16}-\frac{d}{4}|y|-\left|y\right|^2\right)^{2}} \int_{|z| < \frac{d}{4\varepsilon}} \frac{|z|}{\left(1+|z|^{2}\right)^{N-2}} \mathrm{~d}z \\
& \leq \frac{c \varepsilon^{2}}{\left(1-\left|y\right|^{2}\right)^2\left(1-\frac{d^{2}}{16}-\frac{d}{2}|y|-\left|y\right|^2\right)^{2}} \int_{0}^{\frac{d}{4\varepsilon}} \frac{r^{N+1}}{\left(1+r^{2}\right)^{N-2}}\mathrm{~d}r \\
& \hspace{3cm}+\frac{c \varepsilon}{\left(1-\left|y\right|^{2}\right)^2\left(1-\frac{d^{2}}{16}-\frac{d}{2}|y|-\left|y\right|^2\right)^{2}} \int_{0}^{\frac{d}{4\varepsilon}} \frac{r^{N}}{\left(1+r^{2}\right)^{N-2}} \mathrm{~d}r,
\end{align*}
and
$$
\begin{aligned}
& \int_{0}^{\frac{d}{4\varepsilon}} \frac{r^{N+1}}{\left(1+r^{2}\right)^{N-2}}\mathrm{~d}r=\begin{cases}
O(1) \text { if } N>6, \\
O(|\log \varepsilon|) \text { if } N=6, \\
O\left(\varepsilon^{-1}\right) \text { if } N=5,\\
O\left(\varepsilon^{-2}\right) \text { if } N=4,
\end{cases} \\
& \int_{0}^{\frac{d}{4\varepsilon}} \frac{r^{N}}{\left(1+r^{2}\right)^{N-2}}\mathrm{~d}r=\begin{cases}
O(1) \text { if } \quad N>5, \\
O(|\log \varepsilon|) \text { if } N=5,\\
O(\varepsilon^{-1}) \text { if } N=4.
\end{cases}
\end{aligned}
$$
Thus, we have
$$
\int_{|z|<\frac{d}{4\varepsilon}} \frac{1}{\left(1+|z|^{2}\right)^{N-2}}\left[\frac{1}{\left(1-\left|z \varepsilon+y\right|^{2}\right)^{2}}-\frac{1}{\left(1-\left|y\right|^{2}\right)^{2}}\right] \mathrm{~d}z=\begin{cases} O(\varepsilon) \text{ if } N >5,\\
O(\varepsilon |\log \varepsilon|) \text{ if } N =5,\\
O(1) \text{ if } N =4,
\end{cases}
$$
as $\varepsilon \rightarrow 0$. Consequently, we obtain
\begin{align*}
& \int_{|z| < \frac{d}{4\varepsilon}} \frac{1}{\left(1+|z|^{2}\right)^{N-2}} \frac{1}{\left(1-\left|z \varepsilon+y\right|^{2}\right)^{2}} \mathrm{~d}z \\
= & \frac{1}{\left(1-\left|y\right|^{2}\right)^{2}} \int_{|z| < \frac{d}{4\varepsilon}} \frac{1}{\left(1+|z|^{2}\right)^{N-2}} \mathrm{~d}z+\begin{cases} O(\varepsilon) \text{ if } N >5,\\
O(\varepsilon |\log \varepsilon|) \text{ if } N =5,\\
O(1) \text{ if } N =4,
\end{cases} \\
= & \frac{1}{\left(1-\left|y\right|^{2}\right)^{2}} \int_{\mathbb{R}^{N}} \frac{1}{\left(1+|z|^{2}\right)^{N-2}} \mathrm{~d}z- \underbrace{\frac{1}{\left(1-\left|y\right|^{2}\right)^{2}} \int_{\mathbb{R}^{N}\setminus B(0, \frac{d}{4\varepsilon})} \frac{1}{\left(1+|z|^{2}\right)^{N-2}} \mathrm{~d}z}_{\int_{\frac{d}{4\varepsilon}}^{\infty}\frac{r^{N-1}}{(1+r^2)^{N-2}}\mathrm{~d}r= O\left(\frac{\varepsilon^{N-4}}{d^{N-4}}\right)}+\begin{cases} O(\varepsilon) \text{ if } N >5,\\
O(\varepsilon |\log \varepsilon|) \text{ if } N =5,\\
O(1) \text{ if } N =4,
\end{cases}\\
=& \frac{1}{\left(1-\left|y\right|^{2}\right)^{2}} \int_{\mathbb{R}^{N}} \frac{1}{\left(1+|z|^{2}\right)^{N-2}} \mathrm{~d}z+\begin{cases} O(\varepsilon) \text{ if } N >5,\\
O(\varepsilon |\log \varepsilon|) \text{ if } N =5,\\
O(1) \text{ if } N =4.
\end{cases}
\end{align*}
As a result, we have
\begin{equation*}
\begin{aligned}
I1.a:=   \frac{C(N)\varepsilon^2}{\left(1-\left|y\right|^{2}\right)^{2}} \int_{\mathbb{R}^{N}} \frac{1}{\left(1+|z|^{2}\right)^{N-2}} \mathrm{~d}z +\begin{cases} O(\varepsilon^3) \text{ if } N >5,\\
O(\varepsilon^3 |\log \varepsilon|) \text{ if } N =5,\\
O(\varepsilon^2) \text{ if } N =4.
\end{cases} 
\end{aligned}
\end{equation*}
Note that $(1 - |x|^2) > C > 0$ for all $x \in B\left(y, \frac{d}{4}\right)$. Moreover, using \eqref{phiLinf}, we find 
\begin{equation*}
\begin{aligned}
I1.b:=& \int_{B\left(y,\frac{d}{4}\right)} \frac{(\varphi^{\varepsilon, y})^{2}}{\left(1-|x|^{2}\right)^{2}} \mathrm{~d}x \\
& \leq \left\|\varphi^{\varepsilon,y}\right\|^2_{\infty}\int_{B\left(y,\frac{d}{4}\right)} \frac{1}{\left(1-|x|^{2}\right)^{2}}\mathrm{~d}x\leq C \frac{\varepsilon^{N-2}}{d^{2(N-2)}} d^N = O\left(\varepsilon^{N-2}\right).
\end{aligned}
\end{equation*}
Finally, we can estimate the third integral as follows
\begin{equation*}
\begin{aligned}
I1.c:= \int_{B\left(y,\frac{d}{4}\right)} \frac{V^{\varepsilon,y} \varphi^{\varepsilon, y}}{\left(1-|x|^{2}\right)^{2}} \mathrm{~d}x 
&\leq C\left\|\varphi^{\varepsilon,y}\right\|_{\infty}\int_{B\left(y,\frac{d}{4}\right)} V^{\varepsilon,y}\mathrm{~d}x\\
&\leq C \frac{\varepsilon^{\frac{N-2}{2}}}{d^{N-2}}\int_{B\left(y,\frac{d}{4}\right)}\frac{1}{\varepsilon^{\frac{N-2}{2}}\left[1+\frac{|x-y|^2}{\varepsilon^2}\right]^{\frac{N-2}{2}}}\mathrm{~d}x= O(\varepsilon^{N-2}).
\end{aligned}
\end{equation*}
Now, consider the next integral
\begin{equation*}
\int_{B\setminus B\left(y,\frac{d}{4}\right)} \frac{\left(P V^{\varepsilon,y}\right)^{2}}{\left(1-|x|^{2}\right)^{2}} \mathrm{~d}x \leq C\int_{B\setminus B\left(y,\frac{d}{4}\right)} \frac{\left(P V^{\varepsilon,y}\right)^{2}(1-\psi)^2}{\left(1-|x|^{2}\right)^{2}} \mathrm{~d}x\;+\;C\int_{B\setminus B\left(y,\frac{d}{4}\right)} \frac{\left(P V^{\varepsilon,y}\right)^{2}\psi^2}{\left(1-|x|^{2}\right)^{2}} \mathrm{~d}x,
\end{equation*}
where $\psi$ is a $C_c^{\infty}$ function that equals $1$ on $B(y, d/2)$ and vanishes outside $B(y, d)$. Applying Hardy's inequality to the first integral and considering the support of $\psi$ in the second integral, it is not difficult to conclude that $I2 = O\left(\varepsilon^{N-2}\right)$. Thus, by combining all the estimates, we obtain \eqref{ConfTerm1}.\\
The estimate \eqref{ConfTerm2} follows the same approach as the above, so we omit the proof.\\
To prove \eqref{ConfTerm3}, we start by using the identity $P V^{\varepsilon,y} = V^{\varepsilon,y} - \varphi^{\varepsilon,y}$. Applying similar techniques as before and utilizing H\"older's inequality for the integral outside $B\left(y, \frac{d}{4}\right)$, we obtain
\begin{align*}
    \int_{B} \frac{P V^{\varepsilon,y}\partial_\varepsilon P V^{\varepsilon,y}}{\left(1-|x|^{2}\right)^{2}}\mathrm{~d}x= \int_{B\left(y,\frac{d}{4}\right)}\frac{V^{\varepsilon,y}\partial_\varepsilon V^{\varepsilon,y}}{\left(1-|x|^{2}\right)^{2}}\mathrm{~d}x+ O(\varepsilon^{N-3}).
\end{align*}
To estimate the integral in the above expression, we can use the explicit form of $V^{\varepsilon,y}$ and perform the calculations as in the proof of \eqref{ConfTerm1}, leading to 
\begin{align*}
    \int_{B\left(y,\frac{d}{4}\right)}\frac{V^{\varepsilon,y}\partial_\varepsilon V^{\varepsilon,y}}{\left(1-|x|^{2}\right)^{2}}\mathrm{~d}x= \frac{C(N)\varepsilon}{(1-|y|^2)^2}\int_{\Rn} \frac{|z|^2-1}{(1+|z|^2)^{N-1}}\mathrm{~d}z+\begin{cases} O(\varepsilon^2) \text{ if } N >5,\\
O(\varepsilon^2 |\log \varepsilon|) \text{ if } N =5.
\end{cases} 
\end{align*}
Moreover, note that $\int_{\mathbb{R}^N} \frac{|z|^2-1}{(1+|z|^2)^{N-1}} \mathrm{~d}z > 0$ for $N > 4$.
  \end{proof}     
 \noindent 
Next, we proceed with the estimates of the integrals involving $v$.
\begin{estimate} \label{vTerm1}

\begin{align*}
\int_{B} PV^{\varepsilon,y} v & = \begin{cases}
O\left(\varepsilon^{\frac{1}{2}}\right) & \text { if } N=3,\\
O\left(\varepsilon \right) & \text { if } N=4,\\
O\left(\varepsilon^{\frac{3}{2}}\right) & \text { if } N=5,\\
O\left(\varepsilon^2 \left(\log \left(\frac{1}{\varepsilon}\right)\right)^{2 / 3}\right) & \text { if } N=6,\\
O\left(\varepsilon^{2}\right) & \text { if } N>6 .
\end{cases} 
\end{align*}

\end{estimate}

\begin{proof}
By applying H\"older's inequality and the Sobolev embedding theorem, we get
\begin{align*}
   \left|\int_{B} PV^{\varepsilon,y} v\right|\leq \int_{B} V^{\varepsilon,y}|v|= O\left(\left(\int_{B} (V^{\varepsilon,y})^{\frac{2^*}{2^*-1}}\right)^{\frac{2^*-1}{2^*}}\|v\|_{H_{0}^{1}(B)}\right).
   \end{align*}
 From here, using the explicit form of $V^{\varepsilon,y}$, it is straightforward to arrive at our result.
\end{proof}

\begin{estimate}For $y$ as in Lemma \ref{delta_dep} and $N>5$, we have
    \begin{align}
        \int_{B} a(x) (PV^{\varepsilon,y})^{2^*-1}v \mathrm{~d}x= O(\varepsilon^{\frac{\theta(N+2)}{2N}}). \label{vTerm2}
    \end{align}
    \begin{proof}
    We can decompose the integral as follows
        \begin{align*}
            \int_{B} a(x) (PV^{\varepsilon,y})^{2^*-1}v\mathrm{~d}x= \underbrace{a(\bar y) \int_{B} (PV^{\varepsilon,y})^{2^*-1}v\mathrm{~d}x}_{T1}+\underbrace{\int_{B} (a(x)-a(\bar y)) (PV^{\varepsilon,y})^{2^*-1}v\mathrm{~d}x}_{T2}.
        \end{align*}
        Before estimating these two terms, observe that by using \eqref{OrthCondn} and integration by parts, we obtain
        \begin{align*}
            \int_{B} (V^{\varepsilon,y})^{2^*-1}v=  \int_{B} (-\Delta V^{\varepsilon,y}) v = \int_{B} \nabla  V^{\varepsilon,y}. \nabla v =0.
        \end{align*}
   Thus, using this result and noting that $PV^{\varepsilon,y} \leq V^{\varepsilon,y}$, we can rewrite $T1$ as follows
        \begin{align*}
       &     T1:= a(\bar y) \int_{B} (PV^{\varepsilon,y})^{2^*-1}v\mathrm{~d}x= a(\bar y) \int_{B} \left[(PV^{\varepsilon,y})^{2^*-1}\mathrm{~d}x- (V^{\varepsilon,y})^{2^*-1}\right]v\mathrm{~d}x\\
&=a(\bar y)\underbrace{\int_{B(y,d)}\left[(PV^{\varepsilon,y})^{2^*-1}- (V^{\varepsilon,y})^{2^*-1}\right]v\mathrm{~d}x}_{T1.a}+ \underbrace{O\left(\int_{B\setminus B(y,d)}(V^{\varepsilon,y})^{2^*-1}|v|\mathrm{~d}x\right)}_{T1.b}.
        \end{align*}
   Observe that by applying H\"older's inequality, $T1.b$ can be estimated as follows 
   \begin{align*}
         T1.b= O\left(\left(\int_{B\setminus B(y,d)} (V^{\varepsilon,y})^{2^*}\mathrm{~d}x\right)^{\frac{2^*-1}{2^*}}\|v\|_{H_{0}^{1}(B)}\right) = O\left(\frac{\varepsilon^{\frac{N+2}{2}}}{d^{\frac{N+2}{2}}}\right).
     \end{align*}
     Next, we estimate $T1.a$
     \begin{align*}
    T1.a:=\int_{B(y,d)}\left[(PV^{\varepsilon,y})^{2^*-1}- (V^{\varepsilon,y})^{2^*-1}\right]v\mathrm{~d}x=O\left(\int_{B(y,d)}(V^{\varepsilon,y})^{2^*-2}\varphi^{\varepsilon,y}v\mathrm{~d}x\right).
     \end{align*}
   For the integral in the expression above, we use \eqref{phiLinf} to estimate
    \begin{align*}
        \int_{B(y,d)}(V^{\varepsilon,y})^{2^*-2}\varphi^{\varepsilon,y}v \mathrm{~d}x& \leq \frac{\varepsilon^{\frac{N-2}{2}}}{d^{N-2}}\left(\int_{B(y,d)} (V^{\varepsilon,y})^{(2^*-2)\frac{2^*}{2^*-1}}\mathrm{~d}x\right)^{\frac{2^*-1}{2^*}}\|v\|_{H_{0}^{1}(B)}\\
        &\leq C \frac{\varepsilon^{\frac{N-2}{2}}}{d^{N-2}} \left(\varepsilon^{\frac{N(N-2)}{N+2}}\int_{0}^{\frac{d}{\varepsilon}}\frac{r^{N-1}}{(1+r^2)^{\frac{4N}{N+2}}}\mathrm{~d}r\right)^{\frac{N+2}{2N}}\\
&=\begin{cases}
O\left(\frac{\varepsilon}{d}\right) & \text { if } N=3,\\
O\left(\frac{\varepsilon^2}{d^2}\right) & \text { if } N=4,\\
O\left(\frac{\varepsilon^3}{d^3}\right) & \text { if } N=5,\\
O\left(\frac{\varepsilon^4 \left(\log \left(\frac{d}{\varepsilon}\right)\right)^{2 / 3}}{d^4}\right) & \text { if } N=6,\\
O\left(\frac{\varepsilon^{\frac{N+2}{2}}}{d^{\frac{N+2}{2}}}\right) & \text { if } N>6.
\end{cases} 
    \end{align*}
    Now, let's estimate $T2$
        \begin{align*}
        \left|\int_{B} (a(x)-a(\bar{y}))(PV^{\varepsilon,y})^{2^*-1}v\mathrm{~d}x\right|\leq \int_{B} \left|a(x)-a(\bar{y})\right|\left(V^{\varepsilon,y}\right)^{2^*-1}|v|\mathrm{~d}x= \int_{B(y,d)}+\int_{B\setminus B(y,d)}.
        \end{align*}
       For the integral over $B(y,d)$, consider
        \begin{align*}
        \int_{B(y,d)} \left|a(x)-a(\bar{y})\right|\left(V^{\varepsilon,y}\right)^{2^*-1}|v|\mathrm{~d}x= \int_{B(y,d)\bigcap B(\bar y,r)}+\int_{B(y,d)\bigcap {B(\bar y,r)}^\complement}.
        \end{align*}
        For the integral near the point $\bar{y}$, using the assumption \ref{A2}, we get:
        \begin{align*}
        &\int_{B(y,d)\bigcap B(\bar y,r)}\left|a(x)-a(\bar{y})\right|\left(V^{\varepsilon,y}\right)^{2^*-1}|v|\mathrm{~d}x\\
        &\leq \left[\int_{B(y,d)\bigcap B(\bar y,r)}|a(x)-a(\bar{y})|^\frac{2N}{N+2}(V^{\varepsilon, y})^{2^*}\mathrm{~d}x\right]^{\frac{2^*-1}{2^*}} \left[\int_{B}|v|^{2^*}\right]^\frac{1}{2^*}\\
         &\leq  C\left[\int_{\{|z\varepsilon+y-\bar y|<r\}}|z\varepsilon+y-\bar y|^{\theta\frac{2N}{N+2}}\frac{1}{\left(1+|z|^2\right)^N} \mathrm{~d}z\right]^{\frac{2^*-1}{2^*}}\\
         &\leq C \left[r^{\frac{2N\theta}{N+2}-\theta}\varepsilon^\theta\int_{\Rn}\frac{|z|^\theta}{(1+|z^2|)^N}\mathrm{~d}z+r^{\frac{2N\theta}{N+2}-\theta}|y-\bar{y}|^\theta\int_{\Rn}\frac{1}{(1+|z|^2)^N}\mathrm{~d}z\right]^{\frac{2^*-1}{2^*}}=O(\varepsilon^{\frac{\theta(N+2)}{2N}}),
    \end{align*}
   as $\theta < N$. Finally, since $\left\{\left|\varepsilon z + y - \bar y\right| \geq r\right\} \subset {|z| \geq r / 2 \varepsilon}$ for $\left|y - \bar y\right|$ small enough, we have
    \begin{align*}
        \int_{B(y,d)\bigcap {B(\bar y,r)}^\complement}\left|a(x)-a(\bar{y})\right|\left(V^{\varepsilon,y}\right)^{2^*-1}|v|\mathrm{~d}x\leq C\|a\|_{\infty}\left[\int_{\{|z\varepsilon+y-\bar y|\geq r\}}\frac{1}{(1+|z|^2)^N}\mathrm{~d}z\right]^{\frac{2^*-1}{2^*}}=O(\varepsilon^\frac{N+2}{2}).
    \end{align*}
   Similarly, we find that $\int_{B\setminus B(y,d)}\left|a(x)-a(\bar{y})\right|\left|V^{\varepsilon,y}\right|^{2^*-1}|v|\mathrm{~d}x=O\left(\frac{\varepsilon^\frac{N+2}{2}}{d^\frac{N+2}{2}}\right)$.
    \noindent
    Combining all these estimates, we can now conclude the result.
    \end{proof}
\end{estimate}

\medskip

{\bf Acknowledgement:} The research of M.~Bhakta and A. K.~Sahoo are partially supported by Bhakta's grant DST Swarnajaynti fellowship (SB/SJF/2021-22/09), D.~Ganguly is partially supported by the SERB MATRICS (MTR/2023/000331).  and D.~Gupta is supported by the PMRF.

\end{document}